\renewcommand{\u}{\mathfrak{u}}
\newcommand{\R}{\mathbb{R}}
\renewcommand{\Im}{{\rm Im}}
\newcommand{\sign}{{\rm sign}}
\newtheorem{theorem}{Theorem}[section]
\newtheorem{lemma}[theorem]{Lemma}
\newtheorem{proposition}[theorem]{Proposition}
\theoremstyle{remark}
\newtheorem{remark}{Remark}[section]
\theoremstyle{definition}
\newtheorem*{merci}{Acknowledgements}
\numberwithin{equation}{section}
\begin{document}

\title[The Whitham equation]{On Whitham and related equations }
\author[C. Klein]{Christian Klein}
\address{ Institut de Math\'ematiques de Bourgogne, UMR 5584\\   
Universit\'e de Bourgogne-Franche-Comt\'e, 9 avenue Alain Savary, 21078 Dijon
                Cedex, France}
\email{ Christian.Klein@u-bourgogne.fr}
\author[F. Linares]{Felipe Linares}
\address{ IMPA\\ Estrada Dona Castorina 110\\ Rio de Janeiro 22460-320, RJ Brasil}
\email{ linares@impa.br}
\author[D. Pilod]{Didier Pilod}
\address{Instituto de Matem\' atica, Universidade Federal do Rio de Janeiro, Caixa Postal 68530 CEP 21941-97, Rio de Janeiro, RJ Brasil}
\email{didier@im.ufrj.br}
\author[J.-C. Saut]{Jean-Claude Saut}
\address{Laboratoire de Math\' ematiques, UMR 8628,\\
Universit\' e Paris-Sud et CNRS,\\ 91405 Orsay, France}
\email{jean-claude.saut@u-psud.fr}

\date{June 25, 2017}
\maketitle

\begin{abstract}
The aim of this paper is to study,  via theoretical analysis and 
numerical simulations, the dynamics of Whitham and related equations. 
In particular we establish rigorous bounds between solutions of the 
Whitham and KdV equations and provide some insights into the dynamics of the Whitham equation in different regimes, some of them being outside the range of validity of the Whitham equation as a water waves model. 
\end{abstract}

\large
\section{introduction}
The Whitham equation was introduced formally in \cite{Whi}   as an 
alternative to the KdV equation, by keeping the exact dispersion of the water waves system.  No rigorous derivation from the water waves system is known, and there is no consistent scaling allowing to derive it.

Actually the idea in \cite{Whi} was to propose a model describing also the occurrence of waves of greatest height with the Stokes 120 degrees  angle at the crest which is clearly impossible with the KdV equation.  

As was conjectured in \cite{Whi} and later proved in \cite{EW}, this 
is actually the case (with a different angle though) but this \lq\lq 
wave breaking\rq\rq \, phenomena   seems to be irrelevant in the (KdV) regime where the Whitham equation can be  really viewed as a consistent water wave model, as will be discussed in the present paper. 

In short, and as was noticed by Whitham, \lq\lq the desired qualitative effect is in\rq\rq \, but this wave breaking effect cannot be quantitatively linked with the water waves system and thus to \lq\lq real\rq\rq \, waves. From the modeling point of view, the advantage of the Whitham equation with respect to KdV seems  to be the enlargement of  the range of validity of this asymptotic model in terms of frequencies though this is  not so easy to quantify.

On the other hand, due to the very different behavior of the dispersion at low and large frequencies, the Whitham equation is a fascinating mathema-tical object since, as will be discussed later, it has three interesting different asymptotic regimes. 

One should thus distinguish between the usefulness of the Whitham 
equation as a relevant water waves model which seems to be poor (see  
\cite{CG, Ca} for some preliminary comparisons with experiments) and 
its mathematical properties that are rich, in particular as a useful 
and relevant toy model to provide some insights into  the effect of 
adding  a \lq\lq weak\rq\rq \, dispersion  term to a nonlinear hyperbolic equation.

We will restrict in this paper to the Whitham equation posed on the 
real line. Many interesting aspects of the periodic problem, in particular concerning periodic solitary waves, can be found in \cite{EK, EW, HJ, RK,SKCK}. However our results concerning the Cauchy problem and its link with the KdV one are straightforwardly valid in the periodic case.

\vspace{0.3cm}
In order to make the link with the KdV equation (and thus with the full water wave system) we will write the Whitham equation as  in \cite{La1} :

\begin{equation}\label{Whit}
u_t+\mathcal L_\epsilon u_x+\epsilon uu_x=0,
\end{equation}
where the non local operator $\mathcal L_\epsilon$ is related to the dispersion relation of the (linearized) water waves system and is defined by
$$\mathcal L_\epsilon =l(\sqrt{\epsilon}D):=\left(\frac{\tanh \sqrt \epsilon |D|}{\sqrt \epsilon |D|}\right)^{1/2} \quad \text{and} \quad D=-i\nabla=-i\frac{\partial}{\partial x}\, .$$

\vspace{0.3cm}
The (small) parameter $\epsilon$ measures the comparable effects of nonlinearity and dispersion (see below). The Whitham equation is supposed to be an approximation of the full water waves system on time scales of order $\frac{1}{\epsilon},$ see \cite{La1} and Section 2 below.

\vspace{0.3cm}
Taking the formal limit $\sqrt \epsilon |\xi| \to 0$ in  $\mathcal L_\epsilon,$ \eqref{Whit} reduces to the KdV equation

\begin{equation}\label{KdV}
u_t+u_x +\epsilon uu_x+\frac{\epsilon}{6}u_{xxx}=0.
\end{equation}

We do not know of a complete rigorous justification of the Whitham equation from the water waves system. More precisely no correct scaling seems to exist allowing to connect directly and rigorously the Whitham equation and the water wave system. See however {\cite{La1} pages 213-214 and Section 2 below where a comparison with the KdV equation is displayed, justifying thus the Whitham equation via the KdV approximation of weakly nonlinear  long surface water waves. 

\vspace{0.3cm}
On the other hand the Whitham equation  can be viewed as the one-dimensional restriction of the {\it full dispersion KP equation} introduced in \cite{La1} to overcome the \lq\lq bad\rq\rq \, behavior of the dispersion relation of the usual KP equations at low frequencies in $x$ (see also the analysis in  \cite{LS}). We refer to \cite{LPS3} for a further study of the Cauchy problem and to \cite{EhGr} for the existence of localized solitary waves, \lq\lq close\rq\rq \, to the usual KP I ones in the case of strong surface tension):

 \begin{equation}\label{FDbis}
\partial_t u+\tilde 
c_{WW}(\sqrt\epsilon|D^{\epsilon}|)\left(1+\epsilon 
\frac{D_2^2}{D^2_1}\right)^{1/2} u_x+\epsilon \frac{3}{2} uu_x=0,
\end{equation}
with
$$
\tilde c_{WW}(\sqrt \epsilon k)=(1+\beta \epsilon k^2)^{\frac{1}{2}}\left(\frac{\tanh \sqrt \epsilon k}{\sqrt \epsilon k}\right)^{1/2},
$$
where $ \beta\geq0$ is a dimensionless coefficient measuring the surface tension
effects and

$$|D^\epsilon|=\sqrt{D_1^2+\epsilon D_2^2}, \quad D_1=\frac{1}{i} \partial_x,\quad D_2=\frac{1}{i}\partial_y.$$

\vspace{0.3cm}
The link with the full water wave system is via the choice of parameters (see \cite{La1}).
Denoting by $h$ a typical depth of the fluid layer, $a$ a typical amplitude of the wave, $\lambda_x$ and $\lambda_y$ typical wave lengths in $x$ and $y$ respectively, the relevant regime here is when

$$\epsilon\sim \frac{a}{h}\sim \left(\frac{\lambda_x}{\lambda_y}\right)^2\sim \left(\frac{h}{\lambda_x}\right)^2\ll 1.$$

For purely gravity waves, $\beta =0, $ \eqref {FDbis} becomes

\begin{equation}\label{FDter}
\partial_t u+ c_{WW}(\sqrt\epsilon|D^{\epsilon}|)\left(1+\epsilon 
\frac{D_2^2}{D^2_1}\right)^{1/2} u_x+\epsilon \frac{3}{2} uu_x=0,
\end{equation}
with
$$
 c_{WW}(\sqrt \epsilon k)=\left(\frac{\tanh \sqrt \epsilon k}{\sqrt \epsilon k}\right)^{1/2},
$$
 which reduces to the  Whitham equation \eqref{Whit} when $u$ does not depend on $y.$

\vspace{0.3cm}

In presence of surface tension ($\beta>0$) \eqref{FDbis} reduces when $u$ does not depend on $y$ to the capillary Whitham equation (see \cite{RK,DMDK})
\begin{equation}\label{WhitST}
u_t+\widetilde{\mathcal L}_\epsilon u_x+\epsilon uu_x=0,
\end{equation}
where
\begin{equation}\label{dispST}
\widetilde{\mathcal L}_\epsilon =(1+\beta\epsilon|D|^2)^{1/2}\left(\frac{\tanh \sqrt \epsilon |D|}{\sqrt \epsilon |D|}\right)^{1/2}, \;\beta>0.
\end{equation}
Taking the formal limit $\sqrt \epsilon |\xi| \to 0$ in  $\widetilde{\mathcal {L}}_\epsilon,$ \eqref{WhitST} reduces to the KdV equation
\begin{equation}\label{KdVST}
u_t+u_x+\frac{\epsilon}{2}\left(\frac{1}{3}-\beta\right) u_{xxx}+\epsilon uu_x=0.
\end{equation}

\vspace{0.3cm}

Note also that the Whitham equation with surface tension looks, for high frequencies, like the following fractional KdV (fKdV) equation with $\alpha=1/2$
\begin{equation}\label{fKdVST}
u_t+u_x+uu_x-|D|^\alpha u_x=0.
\end{equation}
We refer to \cite{KS, LPS, LPS2} for some properties of the fKdV equations viewed as toy models to study the influence of a \lq\lq weak\rq\rq \, dispersive perturbation on the dynamics of the Burgers equation.

\vspace{0.3cm}
There is a \lq\lq Boussinesq like\rq\rq, system version of \eqref{Whit} for waves propagating in both directions. As for the Whitham equation it cannot been derived directly from the water wave system by a consistent asymptotic analysis but by a rather heuristic and formal argument.

Actually, one obtains a {\it full dispersion system} when in the Boussinesq regime one keeps (formally) the original dispersion of the water waves system (see \cite{La1}, \cite{DKM}, and \cite{AMP} where interesting numerical simulations of the propagation of solitary waves are performed).\footnote{As noticed in \cite{AMP}, the use of nonlocal models for shallow water waves is also suggested in \cite{Za}.}

Setting again $\mathcal L_\epsilon =\left(\frac{\tanh \sqrt \epsilon 
|D|}{\sqrt \epsilon |D|}\right)^{1/2},$ we get with $D=-i\nabla $ or $-i\partial_x:$
\begin{equation}
    \label{FD1d}
    \left\lbrace
    \begin{array}{l}
    \eta_t+\mathcal L_\epsilon ^2u_x+\epsilon (\eta u)_x=0 \\
     u_t+ \eta_x+\epsilon uu_x=0,
 \end{array}\right.
    \end{equation}
when $d=1$ and
\begin{equation}
    \label{FD2d}
    \left\lbrace
    \begin{array}{l}
    \eta_t+\mathcal L_\epsilon^2 \nabla\cdot {\bf u}+\epsilon \nabla \cdot(\eta {\bf u})=0 \\
    {\bf u}_t+\nabla \eta+\frac{\epsilon}{2}\nabla |{\bf u}|^2=0,
 \end{array}\right.
    \end{equation}
when $d=2.$

\vspace{0.3cm}
Taking  the limit $\sqrt \epsilon |\xi| \to 0$ in $\mathcal L_\epsilon,$  \eqref {FD1d} reduces formally to
\begin{equation}
\label{Kaup}
\left\lbrace
\begin{array}{l}
\eta_t+u_x+\frac{\epsilon}{3} u_{xxx}+\epsilon (\eta u)_x=0 \\
u_t+ \eta_x+\epsilon uu_x=0,
\end{array}\right.
\end{equation}
while in the two-dimensional case,  \eqref {FD2d} reduces in the same limit to
\begin{equation}
    \label{FD2dbis}
    \left\lbrace
    \begin{array}{l}
    \eta_t+\nabla \cdot {\bf u}+\frac{\epsilon}{3}\Delta \nabla\cdot {\bf u}+\epsilon \nabla \cdot(\eta {\bf u})=0 \\
    {\bf u}_t+\nabla \eta+\frac{\epsilon}{2}\nabla |{\bf u}|^2=0,
 \end{array}\right.
    \end{equation}
that is to the (linearly ill-posed) system one gets  by expanding to first order the Dirichlet to Neumann operator with respect to $\epsilon$ in the full water wave system (see \cite{La1}). On the other hand, \eqref{FD1d} and \eqref{FD2d} are linearly well-posed. As will be seen below the Cauchy problem for the nonlinear system is locally well-posed under a non physical condition on the initial data though.

System  \eqref{Kaup}   is also known in the Inverse Scattering community as   the Kaup-Kupperschmidt  system (see \cite{Ka, Ku}). It is completely integrable though linearly ill-posed since the eigenvalues of the dispersion matrix are  $\pm i\xi(1-\frac{\epsilon}{3}\xi^2)^{1/2}$ (we refer to \cite{ABM} for an analysis of the ill-posedness of the {\it nonlinear} Kaup and related systems).  It  has explicit solitary waves (see \cite{Ka}). 

The Boussinesq system \eqref{FD1d} can therefore be seen as a (well-posed) regularization of the Kaup-Kupperschmidt  system. It is not known  to be completely integrable (this is unlikely, see in particular the simulations in Section 7).

The full dispersion Boussinesq system has  the following Hamiltonian structure
\begin{displaymath}
\partial_t\begin{pmatrix} \eta\\{{\bf u}}
\end{pmatrix}+J\text{grad}\;H_\epsilon(\eta,{{\bf u}})=0
\end{displaymath}
where
\begin{displaymath}
J=\begin{pmatrix} 0 & \partial_{x} & \partial_{y} \\
                 \partial_{x} & 0 & 0 \\
                 \partial_{y} & 0 & 0 \end{pmatrix},
\end{displaymath}
\begin{displaymath}
H_{\epsilon}(U)=\frac12 \int_{ \R^2}\big(|\mathcal L_\epsilon {{\bf u}}|^2+\eta^2+\epsilon
\eta|{{\bf u}}|^2\big)dxdy,
\end{displaymath}
\begin{displaymath}
U=\begin{pmatrix} \eta  \\
                {{\bf u}}
                  \end{pmatrix},
\end{displaymath}
when $d=2$ and
\begin{displaymath}
\partial_t\begin{pmatrix} \eta\\u
\end{pmatrix}+J\text{grad}\;H_\epsilon(\eta,u)=0
\end{displaymath}
where
\begin{displaymath}
J=\begin{pmatrix} 0&\partial_x\\
\partial_x&0
\end{pmatrix}
\end{displaymath}
and
$$H_\epsilon(\eta,u)=\frac{1}{2}\int_\R (|\mathcal L_\epsilon u|^2+\eta^2+\epsilon u^2\eta)dx,$$
when $d=1.$

We will see in the next section that the  full dispersion Boussinesq system has mathematical properties that make it doubtful as a relevant water wave model.

\vspace{0.5cm}
When surface tension is taken into account, one should replace the 
operator $\mathcal L_\epsilon^2$ by $\mathcal P_\epsilon= 
(I+\beta\epsilon|D|^2)\left(\frac{\tanh(\sqrt \epsilon |D|)}{\sqrt \epsilon |D|}\right)$ where again the parameter $\beta>0$ measures surface tension (see \cite{La1}), yielding a more dispersive full dispersion Boussinesq system. When $\beta>\frac{1}{3},$ this full dispersion Boussinesq system yields in  the limit  $\sqrt \epsilon |\xi| \to 0$ in $\mathcal P_\epsilon,$ Boussinesq systems of the class
$a<0, b=c=d=0$ (see \cite{BCS1}) for which long time (that is on time scales of order $1/\epsilon$) well-posedness is established in \cite{SWX}, Theorem 4.5.

When $\beta<\frac{1}{3},$ the full dispersion Boussinesq system reduces in the formal limit  $\sqrt \epsilon |\xi| \to 0$ in $\mathcal P_\epsilon,$ to an ill-posed system, analogous to the Kaup system in dimension $1.$

\begin{remark}
Another  Whitham-Boussinesq system is introduced in 
\cite{Hu-Pa}.\footnote{This system seems a bit artificial contrary to 
\eqref{FD1d}, \eqref{FD2d} which are the exact counter parts of the 
Whitham equation with respect to the original Boussinesq system} It writes

\begin{equation}
    \label{HPFD1d}
    \left\lbrace
    \begin{array}{l}
    \eta_t+u_x+\epsilon (\eta u)_x=0 \\
     u_t+\mathcal P_\epsilon \eta_x+\epsilon uu_x=0,
 \end{array}\right.
    \end{equation}
when $d=1$ and
\begin{equation}
    \label{HPFD2d}
    \left\lbrace
    \begin{array}{l}
    \eta_t+ \nabla\cdot {\bf u}+\epsilon \nabla \cdot(\eta {\bf u})=0 \\
    {\bf u}_t+\mathcal P_\epsilon \nabla \eta+\frac{\epsilon}{2}\nabla |{\bf u}|^2=0,
 \end{array}\right.
    \end{equation}
when $d=2.$

In  the limit $\sqrt \epsilon |\xi| \to 0$ in $\mathcal L_\epsilon,$  those systems  reduce formally to

\begin{equation}
\label{limHP1d}
\left\lbrace
\begin{array}{l}
\eta_t+u_x+\epsilon (\eta u)_x=0 \\
u_t+\eta_x + \epsilon(\frac{1}{3}-\beta)\eta_{xxx}+\epsilon uu_x=0,
\end{array}\right.
\end{equation}
in dimension one  and to 
\begin{equation}
    \label{limHP2d}
    \left\lbrace
    \begin{array}{l}
    \eta_t+\nabla \cdot {\bf u}+\epsilon \nabla \cdot(\eta {\bf u})=0 \\
    {\bf u}_t+\nabla \eta+ \epsilon(\frac{1}{3}-\beta)\nabla \Delta \eta +\frac{\epsilon}{2}\nabla |{\bf u}|^2=0,
 \end{array}\right.
    \end{equation}
 in the two-dimensional case. Both systems are ill-posed when $0\leq \beta <\frac{1}{3}$ while when $\beta >\frac{1}{3}$ they belong to the class of $a=b=d=0, c<0$ of classical (a,b,c,d) Boussinesq systems for which existence on time scales of order $1/\epsilon$ is established in \cite{SWX}, Theorems 4.6 and 4.7.
    The local well-posedness for \eqref{HPFD1d} is established in \cite{Hu-Pa}.

\end{remark}

\vspace{0.5cm}
According to previous theoretical results and numerical simulations,  one expects at least  three different regimes for the Whitham equation \eqref{Whit}  (without surface tension):

1. Scattering for \lq\lq small\rq\rq \, initial data (see the simulations in  \cite{KS} and in the last Section 7 of the present paper).

2. Finite time blow-up (cusplike), see \cite{KS} for various 
simulations displaying the structure of a shock like blow-up (blow-up 
of the gradient with bounded sup-norm of the solution). The 
occurrence of such phenomena is rigorously proven in  \cite{Hu-Tao} 
for a class of fractional KdV equations and in \cite{Hu} for the 
Whitham equation itself  (see also the numerical simulations in 
\cite{KS}) but these phenomena have probably nothing to do with the breaking of real water waves. In fact, when one keeps the small parameter $\epsilon$ in the equation (that is not done in \cite{Hu, Hu-Tao}), the blow-up should occur on time scales much larger than $1/\epsilon, $ the time scale on which the Whitham equation is supposed to approximate the full water wave system via the KdV equation. We refer again to the simulations in Section 7.

3.  A KdV, long wave  regime. In fact it is shown in  \cite{EGW} 
that  \eqref{Whit} possesses specific solitary waves, close to  those of KdV and formally stable. In this regime one can expect a \lq\lq KdV like\rq\rq \, behavior, namely the soliton resolution, at least on sufficiently long time scales. Those solitary waves and their perturbations are investigated numerically in Section 7 of the present paper.

\vspace{0.3cm} One aim of the present paper is to give further evidence  of the relevance of those conjectures.

\vspace{0.3cm}
The dynamics of the Whitham equation with surface tension \eqref{WhitST} should be different because of the different behavior of the dispersion at high frequencies. This makes the equation more dispersive and the expected dynamics is that of $L^2$ critical KdV type equations. In particular the (expected) finite time blow-up should be similar to that of the $L^2$ critical generalized KdV equation
\begin{equation}\label{cgKdV}
u_t+u^4u_x+u_{xxx}=0,
\end{equation}
or of the modified Benjamin-Ono equation (also $L^2$ critical)
\begin{equation}\label{mBO}
u_t+u^2u_x-\mathcal{H}u_{xx}=0,
\end{equation}
where $\mathcal{H}$ denotes the Hilbert transform.

\vspace{0.3cm}
The rigorous analysis of blow-up for those equations can be found respectively in \cite{MM} and \cite{MP}.

%see Section.

\vspace{0.5cm}
As aforementioned one aim of the present paper is to  give some 
evidence for the above conjectures via mathematical analysis and mainly by careful numerical simulations.

We will also give  some hints on the qualitative behavior of the  full-dispersion Boussinesq systems \eqref{FD1d}, \eqref{FD2d}.

\vspace{0.5cm}
The paper is organized as follows. In  a first section we give the expected error estimates on the correct time scales,  between the solutions of the Whitham  and KdV equation for smooth  initial data. Together with the classical results on the KdV approximation of surface water waves (see \cite{La1}) this implies a rigorous justification of the Whitham equation in the Boussinesq-KdV regime.

The next section concerns the Cauchy problem for the Whitham equation with surface tension. Contrary to  \eqref{Whit} dispersive estimates can be used here to enlarge the space of resolution to the Cauchy problem.

We then comment on the local well-posedness of the Cauchy problem for the Boussinesq full dispersion systems \eqref{FD1d} and \eqref{FD2d}  and of their capillary waves versions. We will see that the well-posedness of the system for pure gravity waves is obtained under a very restrictive, non physical condition (positivity of the wave elevation). When this condition is not satisfied the system is (Hadamard) ill posed and those facts invalidate it as a relevant model for water waves.

The presence of surface tension on the other hand prevents the 
appearance of Hadamard unstable modes when the initial elevation is not positive, the possible unstable modes being bounded then.

The two next sections review (and comment on) known results concerning finite time blow-up and solitary wave solutions.

Finally the last two sections display many accurate numerical 
simulations aiming to  illustrate and to detail various properties of the Whitham equations and systems, allowing  to propose convincing conjectures on their dynamics.

Section 7 is devoted to the Whitham equation. We first construct numerically the solitary waves to the Whitham equation without surface tension and simulate their perturbations for various values  of the small parameter $\epsilon.$

Then we solve numerically the Cauchy problem for Gaussian initial data $\lambda \exp(-x^2).$ Depending on the size of $\epsilon$ and $\lambda$ a finite time blow-up may occur, at a time outside the physically relevant time scales  $O(1/\epsilon)$ though.

The situation is quite different when surface tension is included. Actually  one shows a finite time blow-up very similar to  the one of the $L^2$ critical KdV equation

$$u_t+u^4u_x+u_{xxx}=0.$$

In Section 8 we consider the one-dimensional Whitham-Boussinesq systems, with and without surface tension. We construct numerically solitary waves and study their stability. Then we simulate solutions of the Cauchy problem with initial data satisfying or not the well-posedness condition.

\section{Comparison between the Whitham and KdV equations}

We will compare here the solutions $v$ and $u$ of respectively  the Whitham equation \eqref{Whit} and  the KdV equation \eqref{KdV} with the same initial data $v_0=u_0=\phi\in H^{\infty}(\mathbb R)$.

\smallskip
Here is the main result of this section.

\begin{theorem} \label{maintheo}
Let $\phi \in H^{\infty}(\mathbb R)$. Then, for all $j \in \mathbb N$, $j \ge 0$, there exists
$M_j=M_j(\|\phi\|_{H^{j+8}})>0$  such that
\begin{equation} \label{maintheo.1}
\|(u-v)(t)\|_{H^j_x} \le M_j \epsilon^2t,
\end{equation}
for all $0 \le t \lesssim \epsilon^{-1}$.
\end{theorem}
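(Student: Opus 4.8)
The natural strategy is an energy estimate for the difference $w = u - v$, treating the Whitham equation as a perturbation of KdV at the level of the dispersive symbols. First I would subtract the two equations \eqref{Whit} and \eqref{KdV}. Writing $\mathcal L_\epsilon u_x = \partial_x \bigl(1 + \tfrac{\epsilon}{6}D^2 + \epsilon^2 R_\epsilon(D)\bigr) u$ where, by Taylor expanding $l(\sqrt\epsilon\xi)^2 = \tfrac{\tanh\sqrt\epsilon|\xi|}{\sqrt\epsilon|\xi|}$ (equivalently $l(\sqrt\epsilon\xi) = 1 - \tfrac{\epsilon}{6}\xi^2 + O(\epsilon^2\xi^4)$), the remainder symbol $R_\epsilon(\xi)$ satisfies $|R_\epsilon(\xi)| \lesssim |\xi|^{4}$ uniformly in $\epsilon \in (0,1]$ (and similarly for its derivatives, so $R_\epsilon(D)$ maps $H^{s+4}$ to $H^s$ with a uniform bound); here one uses that $\tanh$ is smooth and bounded and that the full symbol is globally bounded. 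Then $w$ solves
\begin{equation*}
w_t + w_x + \tfrac{\epsilon}{6} w_{xxx} + \epsilon R_\epsilon(D) w_x + \epsilon\, \partial_x\!\left(\tfrac{u+v}{2}\, w\right) = - \epsilon^2 R_\epsilon(D) u_x ,
\end{equation*}
with $w(0) = 0$. The first linear terms $w_x + \tfrac{\epsilon}{6}w_{xxx}$ are skew-adjoint and contribute nothing to $\tfrac{d}{dt}\|w\|_{H^j}^2$; the term $\epsilon R_\epsilon(D) w_x$ is, after applying $\partial_x^j$ and integrating, controlled by $\epsilon\|w\|_{H^j}\|w\|_{H^{j+5}}$ — this is the one term that, naively, is not closed in $H^j$, and I expect it to be the main obstacle (see below). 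The nonlinear commutator term is handled by the standard Kato–Ponce / commutator estimate once $u,v$ are controlled in high Sobolev norm, giving a bound $\lesssim \epsilon\,(\|u\|_{H^{j+1}} + \|v\|_{H^{j+1}})\,\|w\|_{H^j}^2$. The forcing contributes $\epsilon^2 \|R_\epsilon(D)u_x\|_{H^j}\|w\|_{H^j} \lesssim \epsilon^2 \|u\|_{H^{j+5}}\|w\|_{H^j}$.

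To deal with the loss of derivatives in the $\epsilon R_\epsilon(D) w_x$ term, the clean fix is to run the estimate at a fixed high level first: since $\phi\in H^\infty$, classical well-posedness theory for KdV and for Whitham (the latter being a skew-adjoint dispersive perturbation of Burgers, with energy method giving local, hence by the conservation laws global, $H^s$ well-posedness) provides, on the time interval $0\le t\lesssim \epsilon^{-1}$, a bound $\|u(t)\|_{H^{s}} + \|v(t)\|_{H^{s}} \le C_s(\|\phi\|_{H^s})$ uniformly in $\epsilon$, for every $s$. In particular $\|w(t)\|_{H^{s}} \le 2C_s$ for all $s$. Then in the $H^j$ estimate I bound the bad term crudely by $\epsilon\|w\|_{H^j}\cdot 2C_{j+5}$, i.e. I trade the derivative loss for a harmless constant. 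This turns the differential inequality into
\begin{equation*}
\frac{d}{dt}\|w\|_{H^j} \le C\epsilon\,\|w\|_{H^j} + C_j'\,\epsilon^2,
\end{equation*}
with $\|w(0)\|_{H^j}=0$, where $C, C_j'$ depend only on $\|\phi\|_{H^{j+8}}$ (the extra two derivatives over $j+5$ absorb the $\partial_x^j$ applied to the forcing $R_\epsilon(D)u_x$ plus a little slack; one should track the exact count, but $j+8$ is comfortably enough). By Grönwall, $\|w(t)\|_{H^j} \le C_j' \epsilon^2 \cdot \frac{e^{C\epsilon t}-1}{C\epsilon} \le C_j' e^{C} \epsilon^2 t$ on $0\le t \le \epsilon^{-1}$, which is exactly \eqref{maintheo.1} with $M_j = C_j' e^{C}$.

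The genuinely substantive points, in order, are: (i) establishing the uniform-in-$\epsilon$ high-regularity bounds on $u$ and $v$ on the $O(1/\epsilon)$ time scale — routine for KdV via conserved quantities, and for Whitham via the energy method exploiting the $L^2$ conservation and the sign-definiteness / boundedness of the symbol $l$, but it must be stated carefully since everything downstream rests on it; (ii) the symbol analysis showing $l(\sqrt\epsilon\xi)^2 - (1 - \tfrac{\epsilon}{6}\xi^2) = \epsilon^2 R_\epsilon(\xi)$ with $R_\epsilon$ giving a uniformly bounded operator $H^{s+4}\to H^s$ — a one-variable Taylor-with-remainder computation, the only care being uniformity in $\epsilon$ down to the low frequencies where $\tanh$ is analytic; and (iii) the Grönwall bookkeeping. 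I expect (i) to be where the real work sits; (ii) and (iii) are mechanical. One should double-check that no term forces more than $j+8$ derivatives of $\phi$ — the nonlinear term needs $\|u\|_{H^{j+1}}$, the forcing needs $\|u\|_{H^{j+5}}$, and the bad linear term needs $\|w\|_{H^{j+5}}$, so $j+8$ is more than sufficient and the stated dependence $M_j = M_j(\|\phi\|_{H^{j+8}})$ holds.
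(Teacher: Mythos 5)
Your strategy is the same as the paper's --- an $H^j$ energy estimate on the difference, fed by a symbol comparison at order $\epsilon^2$ and by uniform-in-$\epsilon$ a priori bounds on $u$ and $v$ over times $O(1/\epsilon)$, closed by Gronwall --- but you organize the dispersive discrepancy differently. The paper keeps the full Whitham operator $l(\sqrt{\epsilon}D)\partial_x$ acting on $z=u-v$ (skew-adjoint, so it drops out of the energy identity) and puts the entire discrepancy into the forcing term $\pi_\epsilon(D)u$; the bound $\|\partial_x^j\pi_\epsilon(D)u(t)\|_{L^2}\lesssim\epsilon^2$ is then obtained in Proposition \ref{aprioripiu} by propagating it along the KdV flow, with a frequency splitting at $|\xi|\sim\epsilon^{-1/2}$ (Lemma \ref{symbcomp} below that threshold, the crude inequality $1\lesssim\epsilon^4|\xi|^8$ above it --- which is where the index $j+8$ comes from). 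You instead expand the symbol globally, and this is legitimate: your claim $|R_\epsilon(\xi)|\lesssim|\xi|^4$ uniformly in $\epsilon$ does hold for all $\xi$, since on $\sqrt{\epsilon}|\xi|>1$ one has $|l(\sqrt{\epsilon}\xi)-1+\tfrac{\epsilon}{6}\xi^2|\lesssim\epsilon\xi^2\le\epsilon^2\xi^4$. The forcing is then controlled directly by the a priori bound on $\|u\|_{H^{j+5}}$, with no need for the paper's propagation argument; that part of your route is genuinely more economical.

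Two corrections, one of which affects the stated rate. First, the term you single out as ``the main obstacle'', $R_\epsilon(D)\partial_xw$, is no obstacle at all: $R_\epsilon(\xi)$ is real and even, so $R_\epsilon(D)\partial_x$ is skew-adjoint and commutes with $\partial_x^j$, hence $\int_{\R}\partial_x^j\bigl(R_\epsilon(D)\partial_xw\bigr)\,\partial_x^jw\,dx=0$ --- exactly the mechanism by which the paper's $l(\sqrt{\epsilon}D)\partial_xz$ term disappears. Second, if you do insist on your workaround, the power of $\epsilon$ matters: your displayed equation carries this term with coefficient $\epsilon$, whereas your own decomposition $\mathcal L_\epsilon=1+\tfrac{\epsilon}{6}\partial_x^2+\epsilon^2R_\epsilon(D)$ forces the coefficient $\epsilon^2$. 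With $\epsilon^1$ and the crude bound $\|w\|_{H^{j+5}}\le 2C_{j+5}$, the contribution to $\tfrac{d}{dt}\|w\|_{H^j}$ is a constant times $\epsilon$, which is neither of the form $C\epsilon\|w\|_{H^j}$ nor $C\epsilon^2$; Gronwall then yields only $\|w\|_{H^j}\lesssim\epsilon t$, not $\epsilon^2t$. With the correct coefficient $\epsilon^2$ (or, better, with the skew-adjointness observation) the argument closes as claimed. A last small point: the Whitham equation is not globally well-posed ``by the conservation laws'' --- the paper devotes Section 5 to its finite-time blow-up; the uniform bound on $v$ on $[0,c/\epsilon]$ comes from the factor $\epsilon$ in front of the nonlinearity via the ODE argument of Proposition \ref{aprioriWhitham}, and this is the form in which you should invoke it.
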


\begin{remark}

The implicit constant in the notation $t \lesssim \epsilon^{-1}$ 
depends on $\|\phi\|_{H^2}$ for $j=0$ and $1$ and on $\|\phi\|_{H^{j+1}}^{-1}$  for $j\geq 2.$
\end{remark}

\begin{remark}
A similar resul holds {\it mutatis mutandi} for the {\it periodic} problem since we use energy type methods.
\end{remark}

Recall that such a theorem was proved by Bona, Pritchard and Scott \cite{BoPrSc} for the comparison between BBM and KdV. We also refer to Albert and Bona \cite{AlBo} for other comparison results in the long wave regime.

\subsection{The linear case}
In this section, we compare the linear versions of \eqref{KdV} and \eqref{Whit}, \textit{i.e.} the Airy equation
\begin{equation} \label{Airy}
\partial_tu+\partial_xu+\frac{\epsilon}6\partial_x^3u=0,
\end{equation}
and the linear Whitham equation
\begin{equation} \label{linWhitham}
\partial_tv+l(\sqrt{\epsilon}D)\partial_xv=0 \, ,
\end{equation}
where  $l(\sqrt{\epsilon}D)$  denotes the Fourier multiplier of symbol $l(\sqrt{\epsilon}\xi)$ defined by
$$l(\sqrt{\epsilon}\xi)=\left(\frac{\tanh \sqrt \epsilon \xi}{\sqrt \epsilon \xi}\right)^{1/2}.$$

Then, we have the following result
\begin{theorem} \label{lintheo}
Let $\phi \in H^{\infty}(\mathbb R)$. Then, for all $j \in \mathbb N$, $j \ge 0$, there exists $N_j=N_j(\|\phi\|_{H^{j+7}})>0$ such that
the solutions $u$ of \eqref{Airy} and $v$ of \eqref{linWhitham} associated to the same initial datum $\phi$ satisfy
\begin{equation} \label{lintheo.1}
\|\partial_x^j(u-v)(t)\|_{L^{\infty}_x} \le N_j\epsilon^2(1+t) \, ,
\end{equation}
for all $t \in \mathbb R_+$.
\end{theorem}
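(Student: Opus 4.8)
The plan is to work entirely on the Fourier side, since both evolutions are Fourier multipliers. Write $\widehat{u}(t,\xi) = e^{it p_{\mathrm{KdV}}(\xi)}\widehat{\phi}(\xi)$ and $\widehat{v}(t,\xi) = e^{it p_W(\xi)}\widehat{\phi}(\xi)$, where $p_{\mathrm{KdV}}(\xi) = -\xi + \tfrac{\epsilon}{6}\xi^3$ and $p_W(\xi) = -\xi\, l(\sqrt{\epsilon}\xi)$ are the two (real) dispersion symbols. Then
$$
\widehat{(u-v)}(t,\xi) = \bigl(e^{it p_{\mathrm{KdV}}(\xi)} - e^{it p_W(\xi)}\bigr)\widehat{\phi}(\xi),
$$
and using $|e^{ia}-e^{ib}| \le |a-b|$ for real $a,b$ we get the pointwise bound
$$
\bigl|\widehat{(u-v)}(t,\xi)\bigr| \le t\, |p_{\mathrm{KdV}}(\xi) - p_W(\xi)|\, |\widehat{\phi}(\xi)|.
$$
So the whole theorem reduces to a symbol estimate: controlling $|p_{\mathrm{KdV}}(\xi)-p_W(\xi)|$ by $\epsilon^2$ times a suitable power of $\langle\xi\rangle$.

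The key step is the Taylor expansion of $l$. Recall $l(\sqrt\epsilon\xi) = \bigl(\tanh(\sqrt\epsilon\xi)/(\sqrt\epsilon\xi)\bigr)^{1/2}$; the function $k \mapsto (\tanh k / k)^{1/2}$ is smooth and even near $0$ with expansion $1 - \tfrac{1}{6}k^2 + O(k^4)$, so $l(\sqrt\epsilon\xi) = 1 - \tfrac{\epsilon}{6}\xi^2 + R(\sqrt\epsilon\xi)$ with $|R(k)| \lesssim k^4$ for $|k|$ bounded. Hence for $\sqrt\epsilon|\xi|$ bounded, $p_W(\xi) = -\xi + \tfrac{\epsilon}{6}\xi^3 - \xi R(\sqrt\epsilon\xi)$, so $p_{\mathrm{KdV}}(\xi) - p_W(\xi) = \xi R(\sqrt\epsilon\xi)$, and thus $|p_{\mathrm{KdV}}(\xi)-p_W(\xi)| \lesssim \epsilon^2 |\xi|^5$ in the regime $\sqrt\epsilon|\xi|\lesssim 1$. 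For $\sqrt\epsilon|\xi|\gtrsim 1$ one must argue separately: there $l$ is bounded (indeed $0 \le l \le 1$) so $|p_W(\xi)| \le |\xi|$, while $|p_{\mathrm{KdV}}(\xi)| \le |\xi| + \tfrac{\epsilon}{6}|\xi|^3 \lesssim \epsilon|\xi|^3 \lesssim \epsilon^2|\xi|^5$ using $\epsilon\xi^2 \gtrsim 1$ twice; combining, $|p_{\mathrm{KdV}}(\xi)-p_W(\xi)| \lesssim \epsilon^2|\xi|^5$ uniformly in $\xi$. (One can also absorb the low-vs-high frequency split into a single estimate $|p_{\mathrm{KdV}}(\xi)-p_W(\xi)| \lesssim \epsilon^2 \langle\xi\rangle^5$, which is what we need.)

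Finally, to pass from the $\widehat{u-v}$ bound to the stated $L^\infty_x$ estimate on $\partial_x^j(u-v)$, I would use the elementary inequality $\|\partial_x^j w\|_{L^\infty_x} \le \|\,|\xi|^j \widehat{w}\,\|_{L^1_\xi} \le C \bigl\| \langle\xi\rangle^{j+2}\widehat{w}\bigr\|_{L^2_\xi}$ (Cauchy–Schwarz against $\langle\xi\rangle^{-2} \in L^2$). Applying this with $w = u-v$ and inserting the pointwise symbol bound gives
$$
\|\partial_x^j(u-v)(t)\|_{L^\infty_x} \lesssim t\, \bigl\| \langle\xi\rangle^{j+2}\, \epsilon^2\langle\xi\rangle^5\, \widehat{\phi}(\xi)\bigr\|_{L^2_\xi} = \epsilon^2 t\, C\, \|\phi\|_{H^{j+7}},
$$
which is \eqref{lintheo.1} for the $t$-part; the $+1$ (to cover $t$ near $0$) comes from the trivial bound $\|\partial_x^j(u-v)(t)\|_{L^\infty_x} \le 2\|\partial_x^j u_0\|_{L^\infty} \lesssim \|\phi\|_{H^{j+2}}$, which is certainly $\le N_j \epsilon^2$ once we allow $N_j$ to depend on $\|\phi\|_{H^{j+7}}$ — more cleanly, just note $1 \le (1+t)$ and $t \le (1+t)$ so the single bound $\epsilon^2 t$ and the constant bound together give $N_j\epsilon^2(1+t)$. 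This accounts for the loss of $7$ derivatives in $N_j = N_j(\|\phi\|_{H^{j+7}})$.

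The only mildly delicate point — the "main obstacle," such as it is — is making the symbol estimate $|p_{\mathrm{KdV}}(\xi)-p_W(\xi)| \lesssim \epsilon^2\langle\xi\rangle^5$ genuinely uniform in $\xi$ across the transition region $\sqrt\epsilon|\xi|\sim 1$, i.e. checking that the remainder $R$ in the Taylor expansion of $(\tanh k/k)^{1/2}$ satisfies $|R(k)| \lesssim k^4 \wedge 1$ for all real $k$ (decay of $\tanh k/k$ and smoothness at $0$), so that the two regimes patch without any $\epsilon$-dependent constant. Everything else is the routine Fourier-multiplier bookkeeping sketched above.
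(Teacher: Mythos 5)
Your argument is correct and is essentially the paper's own proof: both work on the Fourier side, apply the mean value inequality $|e^{ia}-e^{ib}|\le |a-b|$ together with the expansion $(\tanh x/x)^{1/2}=1-\tfrac16 x^2+\mathcal{O}(x^4)$ (Lemma \ref{symbcomp}) on the region $\sqrt{\epsilon}|\xi|\le 1$, handle $\sqrt{\epsilon}|\xi|\ge 1$ by trading powers of $\epsilon\xi^2\gtrsim 1$, and conclude by Cauchy--Schwarz, landing on $\|\phi\|_{H^{j+7}}$; the only cosmetic difference is that at high frequencies the paper uses the trivial bound $|e^{ia}-e^{ib}|\le 2$ (which is where its $+1$ in $(1+t)$ comes from), whereas you keep the factor $t$ and bound the symbol difference by $\epsilon^2|\xi|^5$ there. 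One small inaccuracy in your closing remark: the remainder $R(k)=(\tanh k/k)^{1/2}-1+\tfrac16 k^2$ is \emph{not} $O(k^4\wedge 1)$ globally (it grows like $k^2/6$ as $|k|\to\infty$), but this does not affect your proof, since what you actually use --- the global bound $|R(k)|\lesssim k^4$, or equivalently your separate treatment of the region $\sqrt{\epsilon}|\xi|\gtrsim 1$ --- is true.
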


\begin{remark}
Note that we could also obtain bounds for the differences of $u$ and $v$ in $L^2$ arguing as in the proof of Theorem \ref{maintheo}.
\end{remark}

The following technical result which compares the symbol of $l(\sqrt{\epsilon}D)\partial_x$ and $\partial_x+\frac{\epsilon}6\partial_x^3$ will be needed below.
\begin{lemma} \label{symbcomp}
Assume that $\sqrt{\epsilon}|\xi| \le 1$. Then,
\begin{equation} \label{symbcomp.1}
\Big|l(\sqrt{\epsilon}\xi)\xi -\left(\xi-\frac{\epsilon}6 \xi^3\right)   \Big| \lesssim \epsilon^2|\xi|^5 \, .
\end{equation}
\end{lemma}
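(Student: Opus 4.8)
The plan is to reduce the inequality to a statement about the scalar function $g(s) = \left(\frac{\tanh s}{s}\right)^{1/2}$ near $s=0$, where $s = \sqrt{\epsilon}\,\xi$. Writing $l(\sqrt{\epsilon}\xi)\xi = g(\sqrt{\epsilon}\xi)\,\xi$ and recalling that $\xi - \frac{\epsilon}{6}\xi^3 = \xi\left(1 - \frac{1}{6}(\sqrt{\epsilon}\xi)^2\right)$, we see that
\[
l(\sqrt{\epsilon}\xi)\xi - \left(\xi - \frac{\epsilon}{6}\xi^3\right) = \xi\left(g(\sqrt{\epsilon}\xi) - 1 + \tfrac{1}{6}(\sqrt{\epsilon}\xi)^2\right).
\]
So it suffices to show that $\big|g(s) - 1 + \frac{1}{6}s^2\big| \lesssim s^4$ uniformly for $|s| \le 1$; multiplying by $|\xi|$ and using $s^4 = \epsilon^2\xi^4$ then yields exactly the claimed bound $\epsilon^2|\xi|^5$.

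First I would record the Taylor expansion of $\frac{\tanh s}{s}$ at the origin: using $\tanh s = s - \frac{s^3}{3} + \frac{2 s^5}{15} - \cdots$ one gets $\frac{\tanh s}{s} = 1 - \frac{s^2}{3} + \frac{2 s^4}{15} - \cdots$, which is a smooth (indeed real-analytic) even function of $s$ on, say, $|s| < \pi/2$, and is bounded below by a positive constant on $|s|\le 1$. Hence $g(s) = (\frac{\tanh s}{s})^{1/2}$ is smooth and even there, so $g(s) = 1 + a_2 s^2 + a_4 s^4 + R(s)$ with $R(s) = O(s^6)$; computing $a_2$ from $(1 + a_2 s^2 + \cdots)^2 = 1 - \frac{s^2}{3} + \cdots$ gives $2a_2 = -\frac13$, i.e. $a_2 = -\frac16$, which matches the KdV dispersion coefficient. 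Therefore $g(s) - 1 + \frac16 s^2 = a_4 s^4 + R(s)$, and by Taylor's theorem with remainder (the fourth derivative of $g$ being continuous, hence bounded on the compact interval $[-1,1]$) this quantity is $O(s^4)$ on $|s|\le 1$, with an explicit constant depending only on $\sup_{|s|\le 1}|g''''(s)|$ and $|a_4|$. This is the content of the lemma.

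The only mild subtlety — and what I would treat as the "main obstacle," though it is quite minor — is justifying the smoothness of $g$ at $s = 0$, i.e. that taking the square root does not destroy regularity. This is handled by the observation above that $\frac{\tanh s}{s}$ extends to a smooth positive function near $0$ (the removable singularity at $s=0$ has value $1$), so $t \mapsto t^{1/2}$ is smooth in a neighborhood of the range, and the composition is smooth. One should also note the even symmetry so that the expansion genuinely proceeds in powers of $s^2$, ruling out any $s^3$ term and making the error $O(s^4)$ rather than merely $O(s^3)$. An alternative, fully elementary route that avoids invoking analyticity is to write $g(s) - 1 + \frac16 s^2 = \frac{s^4}{4!}\, h(s)$ for a continuous function $h$ obtained from the integral form of the Taylor remainder, and bound $|h|$ on $[-1,1]$ directly; either way the constant is harmless and absolute.
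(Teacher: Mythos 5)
Your proposal is correct and follows essentially the same route as the paper, which simply invokes the expansion $\left(\frac{\tanh x}{x}\right)^{1/2}=1-\frac16 x^2+\mathcal{O}(x^4)$ for $|x|<1$ and multiplies by $|\xi|$. Your additional remarks on the smoothness of the square root at the origin and the even symmetry just make explicit what the paper leaves implicit.
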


\begin{proof}
The proof  follows directly from the expansion
\begin{displaymath}
\left(\frac{\tanh(x)}{x}\right)^{\frac12}=1-\frac16x^2+\mathcal{O}(x^4), \quad \text{for} \quad |x|<1 \, .
\end{displaymath}
\end{proof}

\begin{proof}[Proof of Theorem \ref{lintheo}]
The solutions of \eqref{Airy} and \eqref{linWhitham} are respectively given by the unitary groups
\begin{equation} \label{Airygroup}
u(x,t)=e^{-t(\partial_x+\frac{\epsilon}6\partial_x^3)}\phi(x)=\int_{\mathbb R} e^{-it(\xi-\frac{\epsilon}6\xi^3)}e^{ix\xi}\widehat{\phi}(\xi) \, d\xi \, ,
\end{equation}
and
\begin{equation} \label{linWhithamgroup}
v(x,t)=e^{-tl(\sqrt{\epsilon}D)\partial_x}\phi(x)=\int_{\mathbb R} e^{-itl(\sqrt{\epsilon}\xi)\xi}e^{ix\xi}\widehat{\phi}(\xi) \, d\xi \, .
\end{equation}
Then, it follows from the mean value inequality and Lemma \ref{symbcomp}  that
\begin{displaymath}
\begin{split}
\big|\partial_x^j&\big(u(x,t)-v(x,t)\big) \big|\\ &\le \int_{\sqrt{\epsilon}|\xi|<1}\big|e^{-it(\xi-\frac{\epsilon}6\xi^3)}- e^{-itl(\sqrt{\epsilon}\xi)\xi} \big|  |\xi^j\widehat{\phi}(\xi)| \, d\xi +2\int_{\sqrt{\epsilon}|\xi|>1}|\xi^j\widehat{\phi}(\xi)|\\ &
\lesssim \int_{\sqrt{\epsilon}|\xi|<1}  t\epsilon^2|\xi|^{j+5} |\widehat{\phi}(\xi)| \, d\xi +\int_{\sqrt{\epsilon}|\xi|>1}\epsilon^2|\xi|^{j+4}|\widehat{\phi}(\xi)| \, .
\end{split}
\end{displaymath}
Then, we deduce from the Cauchy-Schwarz inequality that
\begin{displaymath}
\|\partial_x^j(u-v)(\cdot,t)\|_{L^{\infty}_x} \lesssim \epsilon^2(1+t)\|\phi\|_{H^{j+7}} \, ,
\end{displaymath}
which concludes the proof of the theorem.
\end{proof}

\begin{remark}

Note that  fundamental solutions of both the linear Whitham and KdV equations have a quite different behavior.

The KdV one is given by the Airy function
$$G_{KdV}(x,t)=\frac{C}{(\sqrt \epsilon t)^{1/3}}Ai\left( \frac{x-t}{(\sqrt \epsilon t)^{1/3}}\right).
$$

Set

$$K(x,t)=\frac{1}{2\pi}\int_\R e^{ix\xi}e^{it\xi\left(\frac{\tanh \xi}{\xi}\right)^{1/2}}d\xi.$$

\footnote{Note that this function is different from the one used in \cite{EW} to study the properties of the solitary waves of the Whitham equation.}

The fundamental solution of the Whitham equation is thus
$$
K_\epsilon(x,t)=\frac{1}{\sqrt \epsilon}K\left(\frac{x}{\sqrt \epsilon},\frac{t}{\sqrt \epsilon}\right).
$$
One can easily establish that contrary to the Airy function, K is an unbounded function. 

Here we follow the analysis in \cite{BS} in a different setting. 
Actually, we write
$$
\xi\left(\frac{\tanh |\xi|}{|\xi|}\right)^{1/2}=(\sign\; \xi)|\xi|^{1/2}\left(1-\frac{2}{1+e^{2|\xi|}}\right)^{1/2}=(\sign\; \xi)|\xi|^{1/2}+r(\xi)
$$
where $r$ is a continuous function exponentially decaying to zero at $\pm \infty.$
Using the elementary identity
$$
e^{ia}e^{ib}=(1+2i\sin \frac{a}{2}e^{ia/2})e^{ib/2},
$$
one obtains the decomposition
$$
K(x,t)=\int_\R e^{it(\sign \xi)\;i|\xi|^{1/2}}e^{ix\xi}d\xi+\int_\R f_t(\xi)e^{it(\sign \xi)\;|\xi|^{1/2}}e^{ix\xi}d\xi=I_t^1(x)+I_t^2(x),
$$
where
$$
f_t(\xi)=2i\sin\frac{t r(\xi)}{2}e^{it\frac{r(\xi)}{2}}.
$$

Since $f_t$ decays exponentially to zero when $|\xi| \to \infty,$ Riemann-Lebesgue lemma implies that for every $t>0,$ $I_t^2$ is a continuous function of $x$ decaying to zero at infinity.

On the other hand, following the analysis in Section 3 of \cite{BS} one can prove that $I_t^1$ decays algebraically  to $0$ when $|x|\to \infty$ for t fixed while for instance $I_1^1(x)\sim |x|^{-3/2}\exp (\frac{i\xi}{x})$ when $x\to 0$ for some non zero $\xi$.
\end{remark}
\vspace{0.3cm}
%\textcolor{red}{TO BE COMPLETED??}

Although we will not use it, we recall for the sake of completeness a dispersive estimate derived in \cite{Mes}, Theorem 2.5,  on the free Whitham group (see also \cite{Me}, Lemma 2.4). Note the difference with the classical $L^1-L^\infty$ estimates on the Airy group.  We denote $S_\epsilon(t)=e^{itl(\sqrt \epsilon D)\partial_x}.$

%\begin{lemma}\label{Melinand}
%Let $f \in L^1(\R)$ such that $xf \in H^2(\R)\cap L^1(\R)$ and $\hat f(0)=0$.  Then the following estimate holds :

%$$\left| \int e^{-itl(\sqrt \epsilon \xi)}e^{ix\xi} \hat{f}(\xi) d\xi    \right| \leq\frac{C}{\sqrt t}\left(\frac{1}{\sqrt \epsilon}\left |\frac{1}{\sqrt{|\xi|}}(\hat{f})'\right|_{L^1}+\epsilon^{1/8}\left||\xi|^{3/4}(\hat{f})'\right|_{L^1}\right)$$
%\end{lemma}

\begin{theorem}\label{Meso}
There exists $C>0$ independent of $\epsilon$ such that for any $\phi\in  \mathcal S(\R^2)$ the following estimates hold :
$$1. \;|S_\epsilon(t)\phi)|_\infty \leq  C\left(\frac{1}{\epsilon^{1/4}(1+t/\sqrt \epsilon)^{1/8}}+\frac{1}{(1+t/\sqrt \epsilon)^{1/2}}\right)(|\phi|_{H^1}+|x\partial_x\phi|_2).$$
 $$2. \;|S_\epsilon(t)\phi)|_\infty \leq  C\left(\frac{1}{\epsilon^{3/4}(1+t/\sqrt \epsilon)^{1/3}}|\phi|_{L^1}+\frac{1}{(1+t/\sqrt \epsilon)^{1/2}}(|\phi|_{H^1}+|x\partial _x\phi|_2)\right).$$
 $$3.\;|S_\epsilon(t)\phi)|_\infty \leq  C\left(\frac{1}{\epsilon^{3/4}(1+t/\sqrt \epsilon)^{1/3}}|x\phi|_{L^2}+\frac{1}{(1+t/\sqrt \epsilon)^{1/2}}(|\phi|_{H^1}+|x\partial _x\phi|_2)\right).$$

\end{theorem}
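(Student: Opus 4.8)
The plan is to reduce to the case $\epsilon=1$ by scaling and then to estimate the resulting oscillatory integral through a low/high frequency decomposition. For the reduction, writing $S_\epsilon(t)\phi(x)=\frac1{2\pi}\int_\R e^{i(x\xi\pm t\,\xi l(\sqrt\epsilon\xi))}\widehat\phi(\xi)\,d\xi$ and substituting $\xi=\eta/\sqrt\epsilon$ gives $S_\epsilon(t)\phi(x)=\frac1{\sqrt\epsilon}\big(S_1(t/\sqrt\epsilon)\psi\big)(x/\sqrt\epsilon)$ with $\psi=\sqrt\epsilon\,\phi(\sqrt\epsilon\,\cdot)$; this is nothing but the scaling $K_\epsilon(x,t)=\tfrac1{\sqrt\epsilon}K(x/\sqrt\epsilon,t/\sqrt\epsilon)$ recorded in the Remark above. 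Since $\|\psi\|_{L^1}$, $\|x\psi\|_{L^2}$, $\|\psi\|_{H^1}$ and $\|x\partial_x\psi\|_{L^2}$ are explicit powers of $\epsilon$ times the corresponding norms of $\phi$, each of the three estimates would follow from its $\epsilon=1$ analogue; so it suffices to study $S_1(\sigma)$, $\sigma=t/\sqrt\epsilon$, whose phase is $p(\xi)=\xi\big(\tanh|\xi|/|\xi|\big)^{1/2}$.

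I would next introduce a smooth dyadic partition $1=\chi_0(\xi)+\sum_{N\geq1}\chi_N(\xi)$, with $\chi_0$ supported in $\{|\xi|\lesssim1\}$ and $\chi_N$ in $\{|\xi|\sim N\}$, and treat the two regimes according to the two faces of the symbol $p$. On $\mathrm{supp}\,\chi_0$, the expansion $p(\xi)=\xi-\tfrac16\xi^3+\mathcal{O}(\xi^5)$ underlying Lemma \ref{symbcomp} gives $|p'''(\xi)|\geq\tfrac12$, hence $|\partial_\xi^3(x\xi\pm\sigma p(\xi))|\gtrsim\sigma$, and van der Corput's lemma with the third derivative yields $\|S_1(\sigma)\chi_0(D)\psi\|_{L^\infty_x}\lesssim(1+\sigma)^{-1/3}\big(\|\widehat\psi\|_{L^1}+\|\partial_\xi\widehat\psi\|_{L^1}\big)$; writing the amplitude as $\widehat\psi=\langle\xi\rangle^{-1}\langle\xi\rangle\widehat\psi$ and applying Cauchy--Schwarz produces the companion bound with $\|\psi\|_{H^1}+\|x\psi\|_{L^2}$. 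After undoing the scaling these are the sources of the $\|\phi\|_{L^1}$ term of estimate $2$ and the $\|x\phi\|_{L^2}$ term of estimate $3$, both with $(1+t/\sqrt\epsilon)^{-1/3}$ decay. On $\mathrm{supp}\,\chi_N$, $N\geq1$, I would instead use the decomposition $p(\xi)=(\sign\xi)|\xi|^{1/2}+r(\xi)$ with $r$ exponentially decaying, recorded in the Remark above; then $p''(\xi)=-\tfrac14|\xi|^{-3/2}+\mathcal{O}(e^{-c|\xi|})$ does not vanish on $\{|\xi|\sim N\}$ but degenerates like $N^{-3/2}$, so van der Corput with the second derivative gives $\|S_1(\sigma)\chi_N(D)\psi\|_{L^\infty_x}\lesssim(\sigma N^{-3/2})^{-1/2}\big(\|\widehat{\chi_N\psi}\|_{L^\infty}+\|\partial_\xi\widehat{\chi_N\psi}\|_{L^1}\big)\lesssim\sigma^{-1/2}N^{3/4}\big(\|\widehat{\chi_N\psi}\|_{L^\infty}+\|\partial_\xi\widehat{\chi_N\psi}\|_{L^1}\big)$.

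The hard part is summing the high-frequency pieces, because the bare series $\sum_{N\geq1}N^{3/4}\big(\|\widehat{\chi_N\psi}\|_{L^\infty}+\|\partial_\xi\widehat{\chi_N\psi}\|_{L^1}\big)$ diverges: this is precisely where the weight $\|x\partial_x\psi\|_{L^2}$ and the regularity $\|\psi\|_{H^1}$ get consumed, and where the exact powers in the statement are produced. One controls the two amplitude norms on the frequency interval of length $\sim N$ by a one-dimensional Gagliardo--Nirenberg inequality and Cauchy--Schwarz, converting them into localised pieces of $\|\psi\|_{H^1}$ and $\|x\partial_x\psi\|_{L^2}$ with appropriate powers of $N$; the part carrying $\|\psi\|_{H^1}$ alone turns out to be summable and contributes the $(1+t/\sqrt\epsilon)^{-1/2}$ term common to all three estimates, while the part charged to $\|x\partial_x\psi\|_{L^2}$ must be truncated at a transition frequency $N_\ast$, the tail $N>N_\ast$ being estimated trivially by Bernstein's inequality and $L^2$ conservation, and $N_\ast$ then optimised against $\sigma$. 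Undoing the scaling, this balance is what outputs the combinations $\epsilon^{1/4}(1+t/\sqrt\epsilon)^{-1/8}$ and $\epsilon^{3/4}(1+t/\sqrt\epsilon)^{-1/3}$ appearing in the statement, together with the common $(1+t/\sqrt\epsilon)^{-1/2}$ term. I expect this bookkeeping --- matching the dyadic sums, the weights on $\widehat\psi$, the choice of $N_\ast$ and the $\epsilon$-rescaling so that one lands exactly on the stated exponents --- to be the only genuinely delicate point; the analytic ingredients (van der Corput at orders two and three, Gagliardo--Nirenberg and Cauchy--Schwarz on a frequency interval, Bernstein's inequality, and the exponential decay of $r$) are standard.
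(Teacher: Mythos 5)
First, a remark on the comparison you were asked to be measured against: the paper does not prove Theorem \ref{Meso} at all. It is quoted verbatim from M\'esognon-Gireau \cite{Mes} (Theorem 2.5; see also \cite{Me}, Lemma 2.4), and the authors explicitly say they will not use it. So the only question is whether your sketch stands on its own. Its skeleton is the natural one and, in outline, the same as the cited reference: rescale to $\epsilon=1$ via $K_\epsilon(x,t)=\epsilon^{-1/2}K(x/\sqrt\epsilon,t/\sqrt\epsilon)$, split frequencies according to the two faces of $p(\xi)=\xi(\tanh|\xi|/|\xi|)^{1/2}$, and apply van der Corput with $p'''\approx -1$ near the origin and $p''\sim -\tfrac14|\xi|^{-3/2}$ at infinity. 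Your scaling identity and the two van der Corput applications are correct as stated.

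The gap is that everything that actually constitutes the theorem is deferred to unperformed ``bookkeeping.'' You concede that the bare dyadic sum $\sum_N N^{3/4}(\|\widehat{\chi_N\psi}\|_\infty+\|\partial_\xi\widehat{\chi_N\psi}\|_{L^1})$ diverges, and then assert -- without carrying out a single exponent count -- that Gagliardo--Nirenberg, a truncation at $N_\ast$, Bernstein on the tail, and an optimisation in $N_\ast$ land exactly on $\epsilon^{-1/4}(1+t/\sqrt\epsilon)^{-1/8}$, $\epsilon^{-3/4}(1+t/\sqrt\epsilon)^{-1/3}$ and the common $(1+t/\sqrt\epsilon)^{-1/2}$ term. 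Those exponents \emph{are} the content of the statement; a proof must produce them, not predict them. Moreover your attribution of the exponents to the high-frequency sum is structurally inconsistent with the theorem itself: the three estimates share an identical second term $(1+t/\sqrt\epsilon)^{-1/2}(|\phi|_{H^1}+|x\partial_x\phi|_2)$ and differ only in their first terms, which are measured respectively in $H^1$ plus weighted norms, in $L^1$, and in $\|x\phi\|_{L^2}$. This indicates that the common term is the high-frequency contribution and the three distinct first terms are three different ways of paying for the \emph{low}-frequency region, where $p''(\xi)\sim-\xi$ degenerates at $\xi=0$ (an Airy-type degeneracy your sketch does not exploit: you only use $p'''$ there, which cannot by itself produce the $1/8$ of estimate 1, since estimate 1 contains no $L^1$ norm and your order-three van der Corput bound requires $\|\widehat\psi\|_{L^1}\lesssim\|\psi\|_{L^1}$ or a Cauchy--Schwarz substitute you have not quantified). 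Estimate 1 is in fact never addressed in your sketch. As it stands the proposal is a plausible research plan, not a proof.
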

\subsection{\textit{A priori} estimates on $u$ and $v$}

It is well-known that the KdV equation is well-posed in $H^{\infty}(\mathbb R)$. Moreover, by using the complete integrability of KdV and in particular the fact that KdV possesses an infinite number of conserved quantities, one can get global bounds at the $H^j$-level for any $j \ge 0$. We refer for example to Saut \cite{Sa}, Bona and Smith \cite{BoSm} and Bona, Pritchard and Scott \cite{BoPrSc} and the references therein. 

\begin{proposition} \label{aprioriKdV}
Let $\phi \in H^{\infty}(\mathbb R)$. Then there exists a unique solution $u \in C([0,+\infty) : H^{\infty}(\mathbb R))$ to \eqref{KdV} such that $u(\cdot,0)=\phi$. Moreover, the flow map data-solution $\phi \mapsto u$ is continuous from $H^{\infty}(\mathbb R)$ into $C([0,+\infty) : H^{\infty}(\mathbb R))$.

Furthermore, the following bounds hold true. For every $j \ge 0$, there exists $C_j=C_j(\|\phi\|_{H^j})$ (note that the $C_j$ can be chosen to be non-increasing functions of their arguments) such that
\begin{equation} \label{boundKdV}
\|u(t)\|_{H^j} \le C_j(\|\phi\|_{H^j}), \quad \forall \, t \ge 0 \, .
\end{equation}
\end{proposition}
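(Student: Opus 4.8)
The plan is to combine the classical well-posedness theory for KdV with the infinite sequence of conservation laws to upgrade local-in-time bounds into global-in-time bounds at every Sobolev level. First, recall that for $\phi\in H^s(\mathbb R)$ with $s$ sufficiently large (say $s\ge 2$), the KdV equation \eqref{KdV} is locally well-posed by a standard energy/parabolic-regularization argument (Bona--Smith \cite{BoSm}); moreover persistence of regularity holds, so that if $\phi\in H^{\infty}(\mathbb R)$ the local solution lies in $C([0,T):H^{\infty}(\mathbb R))$ for some $T>0$, and the flow map is continuous on each $H^s$. The only thing preventing this local solution from being global is a possible blow-up of $\|u(t)\|_{H^s}$ in finite time, so it suffices to establish the a priori bounds \eqref{boundKdV}.

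Next I would produce the bounds \eqref{boundKdV} by induction on $j$, exploiting the polynomial conserved quantities of KdV. The mass $\int u^2\,dx$ is conserved, giving the $j=0$ bound with $C_0(\|\phi\|_{H^0})=\|\phi\|_{L^2}$. The energy $\int\big(\tfrac{\epsilon}{12}u_x^2-\tfrac{\epsilon}{6}\cdot\tfrac13 u^3\big)\,dx$ (up to the precise normalization coming from \eqref{KdV}) is conserved; combined with the $L^2$ bound, Gagliardo--Nirenberg, and Young's inequality this controls $\|u_x(t)\|_{L^2}$, hence $\|u(t)\|_{H^1}$, by a non-increasing function of $\|\phi\|_{H^1}$. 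For general $j$, the $j$-th KdV invariant $I_j[u]$ has the form $\int\big(c_j(\partial_x^j u)^2 + P_j(u,\partial_x u,\dots,\partial_x^{j-1}u)\big)\,dx$ with $c_j\neq 0$ and $P_j$ a polynomial whose terms are of lower order; conservation of $I_j$ together with the already-established $H^{j-1}$ bound and the usual interpolation/Gagliardo--Nirenberg estimates bounds the lower-order polynomial term, leaving $\|\partial_x^j u(t)\|_{L^2}$ controlled, and hence \eqref{boundKdV}. One then cites Saut \cite{Sa}, Bona--Pritchard--Scott \cite{BoPrSc} for the precise form of the invariants and the fact that the resulting $C_j$ may be taken non-increasing.

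Finally, with the a priori bounds in hand, the local solution extends to all $t\ge 0$ by a standard continuation argument, and uniqueness and continuity of the flow map at the $H^{\infty}$ level follow from the local theory together with the global bounds. The main obstacle — really the only nontrivial input — is the structural fact that the higher KdV invariants control the top-order derivative norm modulo lower-order terms that are themselves controlled by induction; this is classical but must be invoked carefully, in particular to get the monotonicity of the constants $C_j$, which is what makes the bound useful later (it is used in the proof of Theorem \ref{maintheo} with $j$ replaced by $j+8$). Since everything here is standard, I would keep the proof brief and lean on the cited references.
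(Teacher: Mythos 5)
Your proposal is correct and follows exactly the route the paper indicates: the paper gives no detailed proof of Proposition \ref{aprioriKdV}, but justifies it by the same combination of classical local well-posedness (Bona--Smith) and the infinite hierarchy of polynomial conservation laws of KdV, citing Saut \cite{Sa} and Bona--Pritchard--Scott \cite{BoPrSc} for the structure of the invariants $I_j[u]=\int\bigl(c_j(\partial_x^j u)^2+P_j(u,\dots,\partial_x^{j-1}u)\bigr)\,dx$ and the resulting inductive $H^j$ bounds. Your write-up simply makes explicit what the paper leaves to the references, so there is nothing to add.
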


 \medskip
Being a skew-adjoint perturbation of the Burgers equation, the Whitham equation is also trivially well-posed on $H^{\infty}(\mathbb R)$ (and also in $H^s(\mathbb R)$ for $s>\frac32$)\footnote{Similarly to the Burgers equation, (see {\it eg} \cite{LPS}) the Cauchy problem for the Whitham equation is expected to be ill-posed in $H^{\frac32}(\R)$ (see \cite{Hu2} for  weaker results on related fractional KdV equations).} but on a time of interval of length $1/\epsilon$. 
\begin{proposition} \label{aprioriWhitham}
Let $\phi \in H^{\infty}(\mathbb R)$. Then there exist a positive time $T \sim \epsilon^{-1}$, a unique solution $v \in C([0,T] : H^{\infty}(\mathbb R))$ to \eqref{Whit} such that $v(\cdot,0)=\phi$. Moreover, the flow map data-solution $\phi \mapsto v$ is continuous from $H^{\infty}(\mathbb R)$ into $C([0,T] : H^{\infty}(\mathbb R))$.

Furthermore, the following bounds hold true. For every $j \in \mathbb N$, $j \ge 2$
\begin{equation} \label{boundWhitham}
\|u(t)\|_{H^j} \le 2\|\phi\|_{H^j}, \quad \forall \, 0 \le t \lesssim \epsilon^{-1} \, .
\end{equation}
Note that the implicit constant in \eqref{boundWhitham} depends on $\|\phi\|_{H^j}$ as in \eqref{boundWhitham.1}.
\end{proposition}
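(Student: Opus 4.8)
The plan is to run the classical energy method, exploiting two structural features of \eqref{Whit}: the operator $\mathcal L_\epsilon\partial_x$ is a skew-adjoint Fourier multiplier, since its symbol $i\xi\, l(\sqrt\epsilon\xi)$ is purely imaginary, while the nonlinearity $\epsilon vv_x$ carries the small factor $\epsilon$. First I would record that $0<l(\sqrt\epsilon\xi)\le 1$, so $\mathcal L_\epsilon$ is bounded on every $H^s(\R)$, the operator $\mathcal L_\epsilon\partial_x$ is skew-adjoint on $L^2(\R)$, and it generates a unitary group on every $H^s(\R)$; consequently \eqref{Whit} is a tame semilinear perturbation of the Burgers equation, and its local well-posedness in $H^s(\R)$ for $s>\tfrac32$, together with persistence of $H^\infty$ regularity, follows from standard arguments (parabolic regularization or Friedrichs mollifiers and a Bona--Smith limiting procedure, exactly as for the fractional KdV equations in \cite{LPS}, or as in \cite{Sa,BoSm,BoPrSc}). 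The only genuinely new point to verify is the quantitative dependence of the existence time and of the $H^j$ norms on $\epsilon$.

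The heart of the argument is the a priori estimate. For an integer $j\ge 2$ I would apply $\langle\partial_x\rangle^j$ to \eqref{Whit}, pair with $\langle\partial_x\rangle^j v$ in $L^2(\R)$ and integrate in $x$; the dispersive contribution vanishes by skew-adjointness, leaving
\begin{displaymath}
\frac12\frac{d}{dt}\|v(t)\|_{H^j}^2=-\epsilon\int_\R\langle\partial_x\rangle^j(vv_x)\,\langle\partial_x\rangle^j v\,dx.
\end{displaymath}
Writing $\langle\partial_x\rangle^j(vv_x)=v\,\partial_x\langle\partial_x\rangle^j v+[\langle\partial_x\rangle^j,v]v_x$, integrating by parts in the first term and bounding the second by the Kato--Ponce commutator estimate, and using $H^j(\R)\hookrightarrow W^{1,\infty}(\R)$ (which holds precisely because $j\ge 2$), I expect to obtain
\begin{displaymath}
\frac{d}{dt}\|v(t)\|_{H^j}^2\le C_j\,\epsilon\,\|v(t)\|_{H^j}^3.
\end{displaymath}
A continuity/bootstrap argument then closes the estimate: while $\|v(t)\|_{H^j}\le 2\|\phi\|_{H^j}$, Gronwall gives $\|v(t)\|_{H^j}^2\le\|\phi\|_{H^j}^2 e^{2C_j\epsilon\|\phi\|_{H^j}t}$, which stays $<4\|\phi\|_{H^j}^2$ up to $T_j:=(\ln 2)/(C_j\epsilon\|\phi\|_{H^j})\sim\epsilon^{-1}$; hence \eqref{boundWhitham} propagates on $[0,T_j]$, and since the solution continues as long as $\|v(t)\|_{W^{1,\infty}}$ (equivalently $\|v(t)\|_{H^2}$) remains finite, the $H^\infty$ solution exists on an interval of length $T\sim\epsilon^{-1}$. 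For $j=0$ the $L^2$ norm is in fact exactly conserved, and for $j=1$ one still closes at the $H^1$ level using $\|\partial_x v\|_{L^\infty}\lesssim\|v\|_{H^2}$; this is the origin of the dependence of the implicit constants on $\|\phi\|_{H^2}$ recorded in the remark following Theorem \ref{maintheo}.

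For uniqueness I would take two solutions $v_1,v_2$ with the same datum, note that $w=v_1-v_2$ solves $w_t+\mathcal L_\epsilon w_x+\epsilon(v_1 w_x+w\,\partial_x v_2)=0$, and run an $L^2$ energy estimate: the dispersive term drops out, $\int v_1 w_x\,w=-\tfrac12\int\partial_x v_1\,w^2$, so $\frac{d}{dt}\|w(t)\|_{L^2}^2\lesssim\epsilon\big(\|\partial_x v_1\|_{L^\infty}+\|\partial_x v_2\|_{L^\infty}\big)\|w(t)\|_{L^2}^2$, forcing $w\equiv 0$ by Gronwall. Continuity of the flow map $\phi\mapsto v$ from $H^\infty$ into $C([0,T]:H^\infty)$ is then routine: if $\phi_n\to\phi$ in every $H^s$, the a priori bounds make $(v_n)$ uniformly bounded in $C([0,T]:H^s)$ for each $s$, and difference estimates performed at regularity $H^{s-1}$ and upgraded by interpolation against the uniform higher-order bounds (\`a la Bona--Smith) give $v_n\to v$ in $C([0,T]:H^s)$ for each $s$. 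I expect the only real obstacle to be the second-paragraph energy estimate: it is exactly the conjunction of the $\epsilon$ in front of the nonlinearity, the skew-adjointness of the dispersion (no derivative loss), and the embedding $H^j\hookrightarrow W^{1,\infty}$ for $j\ge 2$ that both fixes the natural time scale at $\epsilon^{-1}$ and lets the commutator term be absorbed at the $H^j$ level without ever invoking $H^{j+1}$.
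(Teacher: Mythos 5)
Your proposal is correct and follows essentially the same route as the paper: the paper also applies the Bessel potential $J^s=\langle\partial_x\rangle^s$, kills the dispersive term by skew-adjointness, controls the nonlinearity via the Kato--Ponce commutator estimate plus integration by parts to obtain $\frac{d}{dt}\|J^sv\|_{L^2}^2\le c\epsilon\|\partial_xv\|_{L^\infty}\|J^sv\|_{L^2}^2$, invokes $H^{s-1}\hookrightarrow L^\infty$ for $s>\frac32$, and closes with the same ODE/doubling argument yielding the time scale $(2c\epsilon\|\phi\|_{H^s})^{-1}$. The paper takes the local well-posedness, uniqueness and flow-map continuity for granted (as a skew-adjoint perturbation of Burgers) and only proves the bound \eqref{boundWhitham}, so your additional paragraphs on those points are consistent extras rather than a divergence.
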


\begin{proof} We only explain how to prove \eqref{boundWhitham}. Let $v \in C([0,T] : H^{\infty}(\mathbb R))$ be a solution of \eqref{Whit} with initial datum $v(\cdot,0)=\phi$.

Let $J^s$ denote the Bessel potential of order $-s$, \textit{i.e.} 
$$(J^sf)^{\wedge}(\xi)=(1+\xi^2)^{\frac{s}2}\widehat{f}(\xi) \, . $$
Then, it follows from the Kato-Ponce commutator estimates \cite{KaPo} and integrations by parts that 
\begin{displaymath}
\frac{d}{dt}\|J^sv \|_{L^2}^2 \le c\varepsilon \| \partial_xv\|_{L^{\infty}}\|J^sv \|_{L^2}^2 \, ,
\end{displaymath}
for any $s>0$. If $s>\frac32$, we deduce from the Sobolev embedding $H^{s-1}(\mathbb R) \hookrightarrow L^{\infty}(\mathbb R)$ that 
\begin{displaymath}
\frac{d}{dt}\|J^sv \|_{L^2}^2 \le c\varepsilon\|J^sv \|_{L^2}^3 \,.
\end{displaymath}
Hence, we deduce from a classical ODE argument that 
\begin{equation} \label{boundWhitham.1}
\| v(t) \|_{H^s} \le 2\| \phi \|_{H^s}, \quad \text{if} \quad 0 \le t \le (2c\epsilon\|\phi\|_{H^s})^{-1} \, .
\end{equation}
This finishes the proof of Proposition \ref{aprioriWhitham}.
\end{proof}

\medskip
Finally, we also need a bound on $\pi_{\epsilon}(D)u$ where $u$ is the  solution of \eqref{KdV} and $\pi_{\epsilon}(D)$ is the Fourier multiplier of symbol $\pi_{\epsilon}(\xi)$ defined by
\begin{equation} \label{pi}
\pi_{\epsilon}(\xi)= i\Big((\xi-\frac{\epsilon}6 \xi^3)-l(\sqrt{\epsilon}\xi)\xi\Big) \, .
\end{equation}

\begin{proposition} \label{aprioripiu}
Let $\phi \in H^{\infty}(\mathbb R)$ and let $u$ be the solution  of \eqref{KdV} evolving from $\phi$ obtained in Proposition \ref{aprioriKdV}. Then, for all $j \in \mathbb N$, $j \ge 0$, there exists $A_j=A_j(\|\phi\|_{H^{j+8}})>0$ such that
\begin{equation} \label{aprioripiu.1}
\|\partial_x^j\pi_{\epsilon}(D)u(t)\|_{L^2} \le A_j\epsilon^2 \, ,
\end{equation}
for all $0 \le t \le \epsilon^{-1}$.
\end{proposition}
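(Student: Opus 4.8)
The idea is to differentiate the KdV equation, apply the multiplier $\pi_\epsilon(D)$, and run an energy estimate at the $H^j$ level, using the smallness of the symbol $\pi_\epsilon(\xi)$ (of size $\epsilon^2|\xi|^5$ for $\sqrt\epsilon|\xi|\le 1$, by Lemma \ref{symbcomp}) together with the global $H^k$ bounds for $u$ from Proposition \ref{aprioriKdV}. First I would set $w=\pi_\epsilon(D)u$ and observe that, since $\pi_\epsilon(D)$ is a Fourier multiplier, it commutes with the linear part of KdV; applying it to \eqref{KdV} gives
\begin{displaymath}
\partial_t w + \partial_x w + \frac{\epsilon}{6}\partial_x^3 w + \epsilon\, \pi_\epsilon(D)(uu_x) = 0 \, .
\end{displaymath}
Then I would form $\frac{d}{dt}\|\partial_x^j w\|_{L^2}^2$: the skew-adjoint terms $\partial_x$ and $\frac{\epsilon}{6}\partial_x^3$ drop out, leaving
\begin{displaymath}
\frac{d}{dt}\|\partial_x^j w\|_{L^2}^2 = -2\epsilon \int_{\mathbb R} \partial_x^j\pi_\epsilon(D)(uu_x)\, \partial_x^j w \, dx \le 2\epsilon \|\partial_x^j\pi_\epsilon(D)(uu_x)\|_{L^2}\, \|\partial_x^j w\|_{L^2} \, .
\end{displaymath}

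The crux is therefore to bound $\|\partial_x^j\pi_\epsilon(D)(uu_x)\|_{L^2}$ by $C\epsilon^2$ uniformly on $[0,\epsilon^{-1}]$. Here I would split on the Fourier side into the region $\sqrt\epsilon|\xi|\le 1$ and its complement. On the low-frequency region, $|\pi_\epsilon(\xi)|\lesssim \epsilon^2|\xi|^5$, so $\|\partial_x^j\pi_\epsilon(D)(uu_x)\|_{L^2(\sqrt\epsilon|\xi|\le 1)} \lesssim \epsilon^2 \||D|^{j+5}(uu_x)\|_{L^2} \lesssim \epsilon^2 \|uu_x\|_{H^{j+6}}$, which by the algebra property of $H^s$ for $s>1/2$ (or a Kato–Ponce/Moser estimate) is $\lesssim \epsilon^2\|u\|_{H^{j+7}}^2$, and this is controlled by $C_{j+7}(\|\phi\|_{H^{j+7}})^2$ via Proposition \ref{aprioriKdV}. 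On the high-frequency region $\sqrt\epsilon|\xi|> 1$, one cannot use the Taylor expansion, but there $l(\sqrt\epsilon\xi)\xi$ is a bounded-by-$C|\xi|$ term and $\xi-\frac{\epsilon}{6}\xi^3$ is bounded by $C\epsilon|\xi|^3$; using $\sqrt\epsilon|\xi|>1$, i.e. $1<\epsilon\xi^2$, one gains powers of $\epsilon$: for instance $|\xi| = |\xi|\cdot 1 \le |\xi|(\epsilon\xi^2)^{2} = \epsilon^2|\xi|^5$ and similarly $\epsilon|\xi|^3\le \epsilon|\xi|^3(\epsilon\xi^2) = \epsilon^2|\xi|^5$, so again $|\pi_\epsilon(\xi)|\lesssim \epsilon^2|\xi|^5$ on that region too, and the same $H^{j+7}$-norm bound on $uu_x$ closes it. Thus $\|\partial_x^j\pi_\epsilon(D)(uu_x)(t)\|_{L^2} \le B_j\epsilon^2$ for all $t\in[0,\epsilon^{-1}]$, with $B_j$ depending on $\|\phi\|_{H^{j+8}}$ (the extra derivative coming from wanting $uu_x\in H^{j+7}$, i.e. $u\in H^{j+8}$).

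Finally I would feed this into the differential inequality: writing $y(t)=\|\partial_x^j w(t)\|_{L^2}$, we get $\frac{d}{dt}y^2 \le 2\epsilon B_j\epsilon^2\, y$, hence $\frac{d}{dt}y \le \epsilon^3 B_j$, so $y(t) \le y(0) + \epsilon^3 B_j t$. Since $y(0)=\|\partial_x^j\pi_\epsilon(D)\phi\|_{L^2} \lesssim \epsilon^2\|\phi\|_{H^{j+5}}$ by the symbol bound applied to $\phi$ itself (same low/high frequency split), and $t\le \epsilon^{-1}$ gives $\epsilon^3 B_j t \le \epsilon^2 B_j$, we conclude $\|\partial_x^j\pi_\epsilon(D)u(t)\|_{L^2} \le A_j\epsilon^2$ on $[0,\epsilon^{-1}]$ with $A_j=A_j(\|\phi\|_{H^{j+8}})$. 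The only genuinely delicate point is the uniform-in-$\epsilon$ bound $|\pi_\epsilon(\xi)|\lesssim \epsilon^2|\xi|^5$ on the \emph{whole} frequency axis — Lemma \ref{symbcomp} only covers $\sqrt\epsilon|\xi|\le1$, and one must verify by hand the high-frequency estimate using $\epsilon\xi^2>1$; everything else is a routine energy argument plus the conservation-law bounds of Proposition \ref{aprioriKdV}.
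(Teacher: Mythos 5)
Your argument is correct and follows essentially the same route as the paper: an $L^2$ energy estimate on $\partial_x^j\pi_\epsilon(D)u$ using the skew-adjointness of the linear part, a frequency splitting at $\sqrt{\epsilon}|\xi|=1$ with Lemma \ref{symbcomp} on the low frequencies and the gain from $\epsilon\xi^2>1$ on the high frequencies (the paper phrases this as $1\lesssim\epsilon^4|\xi|^8$ on the support, which is the same computation as your unified bound $|\pi_\epsilon(\xi)|\lesssim\epsilon^2|\xi|^5$), followed by integration in time and the same treatment of the initial datum. The only differences are cosmetic (sharp versus smooth cutoffs, and slightly more generous derivative counts), so nothing further is needed.
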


\begin{proof}
We apply the operator $\partial_x^j\pi_{\epsilon}(D)$ to \eqref{KdV}, multiply the equation by $\partial_x^j\pi_{\epsilon}(D)u$  integrate in the space variable over $\mathbb R$ and integrate by parts and use the Cauchy-Schwarz inequality to deduce that
\begin{equation} \label{aprioripiu.2}
\begin{split}
\frac12\frac{d}{dt}\|\partial_x^j\pi_{\epsilon}(D)u\|_{L^2}^2 &=-\epsilon\frac12\int_{\mathbb R} \pi_{\epsilon}(D)\partial_x^{j+1}(u^2)\, \pi_{\epsilon}(D)\partial_x^ju \, dx  \\ &\lesssim \epsilon\|\partial_x^{j+1}\pi_{\epsilon}(D)(u^2) \|_{L^2}\|\partial_x^j\pi_{\epsilon}(D)(u)\|_{L^2} \, .
\end{split}
\end{equation}

Now, we introduce a cut-off function $\eta \in C_0^{\infty}(\mathbb R)$ such that $\text{supp} \, (\eta) \subset [-1,1] $ and $\eta=1$ over $[-1/2, 1/2]$ and we define the Fourier multiplier $P_{< 1/\sqrt{\epsilon}}$ of symbol $\eta(\sqrt{\epsilon}\cdot)$ and $P_{\ge 1/\sqrt{\epsilon}}=1-P_{< 1/\sqrt{\epsilon}}$. Then, the triangle inequality yields
\begin{equation} \label{aprioripiu.3}
\begin{split}
\|\partial_x^{j+1}\pi_{\epsilon}&(D)(u^2) \|_{L^2} \\
& \le \|P_{< 1/\sqrt{\epsilon}}\partial_x^{j+1}\pi_{\epsilon}(D)(u^2) \|_{L^2}
+\|P_{\ge 1/\sqrt{\epsilon}}\partial_x^{j+1}\pi_{\epsilon}(D)(u^2) \|_{L^2} \ ,
\end{split}
\end{equation}
and we need to estimate both terms on the right-hand side of \eqref{aprioripiu.3}.

To control the first one, we use Plancherel's identity, Lemma \ref{symbcomp} and the fact that $H^s(\mathbb R)$ is a Banach algebra for $s>\frac12$ to deduce that
\begin{displaymath}
\|P_{< 1/\sqrt{\epsilon}}\partial_x^{j+1}\pi_{\epsilon}(D)(u^2) \|_{L^2} \lesssim \epsilon^2 \|u^2\|_{H^{j+6}_x} \lesssim \epsilon^2\|u\|_{H^{j+6}}^2 \, ,
\end{displaymath}
which together with Proposition \ref{aprioriKdV} gives that
\begin{equation} \label{aprioripiu.4}
\|P_{< 1/\sqrt{\epsilon}}\partial_x^{j+1}\pi_{\epsilon}(D)(u^2) \|_{L^2} \lesssim \epsilon^2 C_{j+6}(\|\phi\|_{H^{j+6}})^2 \, .
\end{equation}

To control the second one, we use again Plancherel's identity to obtain that
\begin{displaymath}
\||P_{\ge 1/\sqrt{\epsilon}}\partial_x^{j+1}\pi_{\epsilon}(D)(u^2) \|_{L^2}^2 \lesssim \int_{|\xi| \gtrsim 1/\sqrt{\epsilon}} (1+|\xi|^2)^3|\xi|^{2(j+1)}|(u^2)^{\wedge}(\xi)|^2 \, d\xi \, .
\end{displaymath}
Now, observe that $1 \lesssim \epsilon^4|\xi|^8$ on the support of the integral, so that
\begin{equation} \label{aprioripiu.5}
\||P_{\ge 1/\sqrt{\epsilon}}\partial_x^{j+1}\pi_{\epsilon}(D)(u^2) \|_{L^2}^2  \lesssim \epsilon^2 \|u^2\|_{H^{j+8}} \lesssim \epsilon^2 \|u\|_{H^{j+8}}^2 \lesssim \epsilon^2 C_{j+8}(\|\phi\|_{H^{j+8}})^2  \, ,
\end{equation}
since $H^8(\mathbb R)$ is a Banach algebra and where we used Proposition \ref{aprioriKdV} on the last inequality.

Then, we deduce gathering \eqref{aprioripiu.2}--\eqref{aprioripiu.5} that
\begin{equation} \label{aprioripiu.6}
\frac{d}{dt}\|\partial_x^j\pi_{\epsilon}(D)u\|_{L^2}^2 \lesssim \epsilon^3 C_{j+8}(\|\phi\|_{H^{j+8}})^2\|\partial_x^j\pi_{\epsilon}(D)u\|_{L^2} \, .
\end{equation}
Therefore, we deduce integrating between $0$ and $t$ that
\begin{equation} \label{aprioripiu.7}
\|\partial_x^j\pi_{\epsilon}(D)u(t)\|_{L^2} \le \|\partial_x^j\pi_{\epsilon}(D)\phi\|_{L^2}+C_{j+8}(\|\phi\|_{H^{j+8}})^2 \epsilon^3t \, .
\end{equation}

Finally, arguing as above, we get from Lemma \ref{symbcomp} and Plancherel's identity that
\begin{equation} \label{aprioripiu.8}
\begin{split}
\|\partial_x^j\pi_{\epsilon}(D)\phi\|_{L^2} &\le \|P_{< 1/\sqrt{\epsilon}}\partial_x^{j}\pi_{\epsilon}(D)\phi \|_{L^2}+\|P_{\ge 1/\sqrt{\epsilon}}\partial_x^{j}\pi_{\epsilon}(D)\phi \|_{L^2} 
\\ & \lesssim  \epsilon^2\| \phi\|_{H^{j+5}}+\epsilon^2\| \phi\|_{H^{j+7}} \, .
\end{split}
\end{equation}

 Then, we deduce combining \eqref{aprioripiu.7} and \eqref{aprioripiu.8} that
 \begin{equation} \label{aprioripiu.9}
\|\partial_x^j\pi_{\epsilon}(D)u(t)\|_{L^2} \le A_j(\|\phi\|_{H^{j+8}}) \epsilon^2(1+\epsilon t) \le  2A_j(\|\phi\|_{H^{j+8}}) \epsilon^2 \, ,
\end{equation}
as soon as $0 \le t \le t/\epsilon$.
\end{proof}

\subsection{Proof of Theorem \ref{maintheo}}

Let $z=u-v$ and $j \in \mathbb N$, $j \ge 0$. It is deduced from equations \eqref{Whit} and \eqref{KdV} that $z$ solves the initial value problem
\begin{equation} \label{diff}
\begin{cases}
\partial_tz+l(\sqrt{\epsilon}D)\partial_xz+\pi_{\epsilon}(D)u+\epsilon v\partial_xz-\epsilon z\partial_xu=0 \\
w(\cdot,0)=0 \, ,
\end{cases}
\end{equation}
where $\pi_{\epsilon}(D)$ is the Fourier multiplier of symbol $\pi_{\epsilon}(\xi)$ defined in \eqref{pi}.

Note that the equation of $z$ is well defined for $0<t \lesssim 1/\epsilon$.

\medskip
Differentiate $j$ times, multiply the equation \eqref{diff} by $\partial_x^jz$, integrate in the space variable over $\mathbb R$ and integrate by parts to deduce that
\begin{equation} \label{maintheo.3}
\begin{split}
\frac12\frac{d}{dt}\int_{\mathbb R} (\partial_x^jz)^2dx&=
-\int_{\mathbb R}\partial_x^j \pi_{\epsilon}(D)u\partial_x^jz \, dx
-\epsilon\int_{\mathbb R}\partial_x^j(v\partial_xz) \partial_x^jz \, dx\\ & \quad -\epsilon\int_{\mathbb R}\partial_x^j(z\partial_xu)\partial_x^j z \, dx \, .
\end{split}
\end{equation}
Thus, the Cauchy-Schwarz inequality, the Leibniz rule and integration by parts yield
\begin{equation} \label{maintheo.4}
 \frac{d}{dt}\|\partial_x^jz\|_{L^2}^2 \lesssim \|\partial_x^j\pi_{\epsilon}(D)u\|_{L^2}\|\partial_x^jz\|_{L^2}+\epsilon\big( \|\partial_xu\|_{L^{\infty}}+ \|\partial_xv\|_{L^{\infty}})\|\partial_x^jz\|_{L^2}^2
\end{equation}
in the cases $j=0$ and $j=1$ and 
\begin{equation} \label{maintheo.4b}
 \frac{d}{dt}\|\partial_x^jz\|_{L^2}^2 \lesssim \|\partial_x^j\pi_{\epsilon}(D)u\|_{L^2}\|\partial_x^jz\|_{L^2}+\epsilon\sum_{k=1}^j\big( \|\partial_x^ku\|_{L^{\infty}}+ \|\partial_x^kv\|_{L^{\infty}}\big)\|z\|_{H^j}^2
\end{equation}
in the cases where $j \ge 2$.

Now, on the one hand, we get by using the Sobolev embedding and Propositions \ref{aprioriKdV} and \ref{aprioriWhitham} that
\begin{equation} \label{maintheo.5}
\begin{split}
\sum_{k=1}^j\big(\|\partial_x^ku(\cdot,t)\|_{L^{\infty}}&+\|\partial_x^kv(\cdot,t)\|_{L^{\infty}}\big)\\ & \lesssim \|u(\cdot,t)\|_{H^{\kappa}}+\|v(\cdot,t)\|_{H^{\kappa}} \lesssim K_1(\|\phi\|_{H^{\kappa}})=:K_1 \, ,
\end{split}
\end{equation} 
for all $0 \le t \lesssim \epsilon^{-1}$ (where the implicit constant depends on the $\|\cdot\|_{H^{\kappa}}$ norm of $\phi$ as explained in the proof of proposition \ref{aprioriWhitham}). Here, we used the notation $\kappa=2$ for $j=0$ or $1$ and $\kappa=j+1$ for $j \ge 2$. On the other other hand, by applying Proposition \ref{aprioripiu}, it follows that
\begin{equation} \label{maintheo.6}
 \|\partial_x^j\pi_{\epsilon}(D)u\|_{L^2} \le \epsilon^2A_{j+8}(\|\phi\|_{H^{j+8}})=:\epsilon^2K_2 \, ,
\end{equation}
for all $0 \le t \le \epsilon^{-1}$.

Therefore, we deduce gathering \eqref{maintheo.4}, \eqref{maintheo.5}, \eqref{maintheo.6} and using Gronwall's inequality that
\begin{equation} \label{maintheo.7}
\|z(t)\|_{H^j} \le  K_2  \frac{e^{K_1\epsilon t}-1}{K_1\epsilon}\epsilon^2
\le K_2e^{K_1}\epsilon^2 t,
\end{equation}
whenever $0\le t \lesssim \epsilon^{-1}$, which concludes the proof of Theorem \ref{maintheo}.

\begin{remark}
A similar comparison result can be established between the Whitham equation with surface tension \eqref{WhitST} and the KdV equation \eqref{KdVST} by using the expansion 
$$(1+\beta x^2)^{1/2}\left(\frac{\tanh x}{x}\right)^{1/2}=1-\frac{x^2}{6}(1-3\beta)+O(x^4).$$
\end{remark}

\vspace{0.5cm}

\section{The Cauchy problem for the capillary Whitham equation}
As was already noticed the Cauchy problem for the Whitham equations \eqref{Whit} and \eqref{WhitST} is trivially well-posed in $H^s(\R),\; s>\frac{3}{2}$ on time scales of order $\epsilon^{-1}.$

This result can be improved (by enlarging the space of resolution) for the capillary Whitham equation \eqref{WhitST} by using its dispersive properties. Actually one gets, see  \cite{LPS2} where general fractional KdV (fKdV) equations are considered:

\begin{theorem}\label{WST}
Assume that $s >\frac{21}{16}$.
 Then, for every $u_0 \in H^s(\mathbb R)$, there exist a
positive time $T_\epsilon =T_\epsilon(\|u_0\|_{H^s})=0(1/\sqrt \epsilon)$ (which can be chosen as a non-increasing function of its argument), and a unique solution $u$ to
\eqref{WhitST} satisfying $u(\cdot,0)=u_0$ such that
\begin{equation} \label{maintheo1}
u \in  C([0,T_\epsilon]:H^s(\mathbb R)) \quad \text{and} \quad \partial_xu \in L^1([0,T_\epsilon]:L^{\infty}(\mathbb R))  .
\end{equation}
Moreover, for any $0<T'<T_\epsilon$, there exists a neighborhood
$\mathcal{U}$ of $u_0$ in $H^s(\mathbb R)$ such that the flow map
data-solution
\begin{equation} \label{maintheo2}
S^s_{T'}: \mathcal{U} \longrightarrow C([0,T'];H^s(\mathbb R)) , \
u_0 \longmapsto u,
\end{equation}
is continuous.
\end{theorem}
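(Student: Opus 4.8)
The plan is to prove Theorem \ref{WST} by a standard combination of Strichartz/smoothing-type dispersive estimates for the capillary Whitham group with an energy method, exactly in the spirit of the treatment of the fractional KdV equations in \cite{LPS2}. First I would record the relevant linear estimates for the group $\widetilde S_\epsilon(t)=e^{-t\widetilde{\mathcal L}_\epsilon\partial_x}$: since the symbol $\widetilde{\mathcal L}_\epsilon(\xi)\xi = (1+\beta\epsilon\xi^2)^{1/2}(\tanh(\sqrt\epsilon|\xi|)/(\sqrt\epsilon|\xi|))^{1/2}\xi$ behaves like $|\xi|^{3/2}$ for large $\xi$ (up to $\epsilon$-dependent constants), the phase has non-degenerate second derivative in the high-frequency regime, so one obtains a local smoothing (Kato) gain of $1/4$ derivative, a maximal-function estimate of Strichartz type controlling $\|\partial_x^{?}\widetilde S_\epsilon(t)\phi\|_{L^4_xL^\infty_T}$, and the corresponding $L^2_xL^\infty_T$ maximal-function bound with a loss of $s>3/4$ derivatives; the exponent $21/16$ in the statement comes precisely from optimizing the interplay of these estimates (the Kenig--Ponce--Vega type refined Strichartz inequality). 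Care must be taken to track the $\epsilon$-dependence of all constants so that the existence time scales like $\epsilon^{-1/2}$; this is where the rescaling $u(x,t)\mapsto u(x/\sqrt\epsilon, t/\sqrt\epsilon)$ which removes $\epsilon$ from the dispersive part while turning the nonlinearity into $\sqrt\epsilon\, u u_x$ is the cleanest bookkeeping device.

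Next I would set up the contraction. Working in the rescaled variables, one defines the Banach space $X^s_T$ consisting of functions $u\in C([0,T];H^s)$ with finite norm incorporating (i) $\|u\|_{L^\infty_TH^s}$, (ii) the smoothing norm $\|\langle D\rangle^{s+1/4}\partial_x^{-1}\,\text{(something)}\|_{L^\infty_xL^2_T}$ measuring the $1/4$-gain, (iii) a maximal-function norm $\|\langle D\rangle^{r}u\|_{L^4_xL^\infty_T}$ and (iv) $\|\partial_x u\|_{L^1_TL^\infty_x}$, the last being exactly the quantity appearing in \eqref{maintheo1}. Applying these linear estimates to the Duhamel operator $\Phi(u)(t)=\widetilde S_\epsilon(t)u_0 - \int_0^t \widetilde S_\epsilon(t-t')\,(\text{nonlinearity})(t')\,dt'$ and using the bilinear/fractional-Leibniz estimates of Kato--Ponce type to handle $uu_x$, one shows $\Phi$ maps a ball of $X^s_T$ to itself and is a contraction for $T$ small, with the smallness quantified by $\|u_0\|_{H^s}$ through a non-increasing function; undoing the rescaling gives $T_\epsilon = O(\epsilon^{-1/2})$. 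Uniqueness and the continuity of the flow map on a neighborhood $\mathcal U$ follow from the same contraction estimates applied to the difference of two solutions, together with a standard Bona--Smith type argument if one wants continuity rather than merely Lipschitz continuity in a weaker norm (though for the stated conclusion Lipschitz-in-$X^s$ already yields continuity of $S^s_{T'}$).

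The capillary term $(1+\beta\epsilon|D|^2)^{1/2}$ deserves a separate comment because it is the genuinely new feature relative to \cite{LPS2}: near $\xi=0$ the full symbol $\widetilde{\mathcal L}_\epsilon(\xi)\xi$ still vanishes linearly and is smooth, so the low frequencies carry no dispersion and are handled by the energy part alone; the decomposition $P_{<1/\sqrt\epsilon}$ and $P_{\ge 1/\sqrt\epsilon}$ already used in the proof of Proposition \ref{aprioripiu} is the right way to split, applying the dispersive estimates only to the high-frequency piece where the phase is genuinely of $|\xi|^{3/2}$-type and $\epsilon$-uniform (after rescaling). One checks that the first two derivatives of the phase $\varphi(\xi)=(1+\beta\xi^2)^{1/2}(\tanh|\xi|/|\xi|)^{1/2}\xi$ satisfy $|\varphi''(\xi)|\gtrsim |\xi|^{-1/2}$ for $|\xi|\gtrsim 1$, which is exactly the hypothesis needed for the Van der Corput based oscillatory integral bounds underlying the smoothing and maximal estimates.

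The main obstacle I anticipate is the sharp bookkeeping needed to reach $s>21/16$ rather than a cruder threshold: one has to combine the $1/4$-smoothing, the $L^4_xL^\infty_T$ maximal estimate (which for a phase of order $3/2$ costs a certain fractional number of derivatives), and the high-low frequency splitting in an optimal way, interpolating carefully, and then verify that the nonlinear term $uu_x$ can be controlled in the dual smoothing norm by the product of the $X^s$-norm with itself. This is precisely the technical heart of \cite{LPS2}; here it goes through essentially verbatim once the linear estimates are shown to hold uniformly in $\epsilon$ after rescaling, so the statement above can legitimately be attributed to that reference, and the proof reduces to invoking it together with the scaling remark that converts its time of existence of order $1$ into one of order $\epsilon^{-1/2}$.
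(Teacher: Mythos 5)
You correctly identify both the source of the result and the origin of the exponent: the paper's entire justification of Theorem \ref{WST} is the citation of \cite{LPS2}, where the fractional KdV equations $u_t+uu_x-|D|^\alpha u_x=0$ are shown to be locally well posed for $s>\frac32-\frac{3\alpha}{8}$, and \eqref{WhitST} matches the case $\alpha=\frac12$ at high frequencies, giving $\frac32-\frac{3}{16}=\frac{21}{16}$; the rescaling producing $T_\epsilon=O(\epsilon^{-1/2})$ is also the intended bookkeeping. The genuine gap is in your description of the mechanism. You propose to close the argument by a contraction of the Duhamel map in a Strichartz-type space $X^s_T$, concluding Lipschitz continuity of the flow on $X^s$. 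This cannot work for \eqref{WhitST}: the dispersion $|D|^{1/2}\partial_x$ is far too weak for the quadratic term $uu_x$, which loses a full derivative, to be absorbed by the $1/4$-derivative smoothing gain inside a Picard iteration, and the flow map of such weakly dispersive Burgers-type equations is known not to be $C^2$ --- let alone Lipschitz --- on any $H^s$ (cf.\ \cite{MST, LPS2, Hu2}). This is consistent with the theorem asserting only \emph{continuity} of $S^s_{T'}$, and it is why the additional information $\partial_xu\in L^1([0,T_\epsilon]:L^\infty)$ is recorded separately in \eqref{maintheo1} rather than being folded into a contraction norm.

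The actual argument of \cite{LPS2} is quasilinear: one derives the a priori energy estimate $\frac{d}{dt}\|u\|_{H^s}^2\lesssim \|\partial_xu\|_{L^\infty_x}\|u\|_{H^s}^2$ via Kato--Ponce commutator estimates, and then controls $\|\partial_xu\|_{L^1_TL^\infty_x}$ by a refined Strichartz estimate obtained by chopping $[0,T]$ into small time intervals in the manner of Koch--Tzvetkov and Kenig--Koenig; optimizing the subinterval length against the frequency localization is what yields the threshold $s>\frac32-\frac{3\alpha}{8}$ (this is a different device from the Kenig--Ponce--Vega smoothing/maximal-function scheme you invoke, which belongs to the semilinear world). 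Uniqueness then follows from a Gronwall estimate at the $L^2$ level using $\partial_xu\in L^1_TL^\infty_x$, and the continuity of the flow map requires a Bona--Smith approximation argument --- it is not a consequence of contraction estimates, which are unavailable here. Your remaining observations --- the uniformity in $\epsilon$ after rescaling, the harmlessness of the low frequencies where the symbol is smooth and the dispersion degenerates, and the lower bound $|\varphi''(\xi)|\gtrsim|\xi|^{-1/2}$ for $|\xi|\gtrsim 1$ --- are correct and are indeed what must be checked to transfer the result of \cite{LPS2} to the capillary symbol.
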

\begin{remark}
1. Since the value $\alpha =\frac{1}{2}$ is $L^2$ critical for the fKdV equation we conjecture that the Cauchy problem for \eqref{WhitST} is globally well posed for initial data in the energy space $H^{\frac14}(\R)$ having a sufficiently small $L^2$ norm.

2. This result was recently improved in \cite{MPV} where the local well-posedness is obtained for $s>\frac{9}{8}.$

3. For \lq\lq large enough\rq\rq \, initial data, one expects a finite time blow-up silmilar to that of the $L^2$-critical generalized KdV equation or to the cubic-Benjamin-Ono equation proven respectively in \cite{MM} and \cite{MP}, as displayed in the simulations of Section 7.
\end{remark}

\begin{remark}
Using the same energy methods one can prove a result similar to that of Theorem \ref{maintheo} between solutions of \eqref{WhitST} and \eqref{KdVST}.
\end{remark}

\section{The Cauchy problem for the Boussinesq-Whitham systems}
In both cases (without or with surface tension) the Boussinesq-Whitham systems are linearly well-posed since the linearized systems write, say in dimension one
\begin{displaymath}
\partial_t\begin{pmatrix} \eta\\u
\end{pmatrix}+\partial_x A\begin{pmatrix} \eta\\u
\end{pmatrix}=0
\end{displaymath}
where the Fourier transform of the matrix operator A has real eigenvalues
$$
\lambda_{\pm}(\xi)=\pm\left(\frac{\tanh \sqrt \epsilon |\xi|}{\sqrt \epsilon|\xi|}\right)^{1/2}
$$
in absence of surface tension and
$$
\lambda_{\pm}(\xi)=\pm (1+\epsilon\beta|\xi|^2)^{1/2}\left(\frac{\tanh \sqrt \epsilon |\xi|}{\sqrt \epsilon|\xi|}\right)^{1/2}
$$
in presence of surface tension.

\vspace{0.3cm}
The theory of the Cauchy problem for the nonlinear system without surface tension is relatively straightforward and we focus on the case of space dimension one. Since the operator $\mathcal L_\epsilon^2 \partial_x$ has order zero,  \eqref{FD1d} is an order zero perturbation of a first order system, more precisely it writes

\begin{equation}\label{hyp1}
\partial_t U+\mathcal A_\epsilon(U)\partial_x U+\mathcal H_\epsilon U=0
\end{equation}
where $U=\begin{pmatrix}
\eta\\
u
\end{pmatrix}$, $\mathcal A_\epsilon(U)= \begin{pmatrix}
\epsilon u&\epsilon\eta\\
1&\epsilon u
\end{pmatrix},$ $\mathcal H_\epsilon U=\begin{pmatrix}
\mathcal L_\epsilon^2 \partial_x u\\
0
\end{pmatrix}.$

The matrix $\mathcal A_\epsilon(U)$ has eigenvalues $\lambda$ satisfying
$$
(\epsilon u-\lambda)^2=\epsilon \eta
$$
so that the system 
$$
\partial_t U+\mathcal A_\epsilon(U)\partial_x U=0
$$ 
is hyperbolic in the regions where 
\begin{equation}\label{hypcond2}
\eta >0,
\end{equation}
and Hadamard ill-posed if this condition is violated.

Actually, under the condition

\begin{equation}\label{hypcond}
\eta \geq C_0>0,
\end{equation}
the system can be symmetrized via the positive definite symmetrizer $S(U)=\begin{pmatrix}
1&0\\
0&\epsilon\eta
\end{pmatrix}.$

The same process can be applied to \eqref{FD1d} yielding a symmetric hyperbolic system perturbed by the order zero operator $\mathcal H_\epsilon$ namely 

\begin{equation}\label{whitSym}
S(U)U_t+\epsilon\begin{pmatrix}
u&\eta\\
\eta&\epsilon u\eta
\end{pmatrix} U_x+\mathcal H_\epsilon U=0.
\end{equation}

Setting $\zeta=\eta+N_0$ where $N_0>0$ is a fixed constraint, the standard theory of symmetrizable hyperbolic systems (see {\it eg} \cite{Ma}) 
imply the local well-posedness of the Cauchy problem for $(\zeta,u) \in H^s(\R)\times H^s(\R),\; s>\frac{3}{2}$ for initial data $\zeta_0$ such that $\eta_0$ is sufficiently small.   Similar arguments in the two-dimensional case yield well-posedness in 
$H^s(\R^2)\times H^s(\R^2),\; s>2. $ 

\vspace{0.3cm}
 Note  that the condition \eqref{hypcond2} implies that the wave is always of elevation which seems to be the case of solitary wave solutions (see Section 7). On the other hand, condition \eqref{hypcond}  implies that the wave  cannot tend to zero at infinity and thus the perturbations of the solitary wave solutions are excluded from the range of well-posed initial data. Those facts are  not physically realistic, invaliding the Boussinesq-Whitham system as a relevant water waves model.

 \begin{remark}
 We do not know of a local well-posedness result for \eqref{FD1d} or \eqref{FD2d} under the assumption \eqref{hypcond2}.
 \end{remark}

To illustrate the (Hadamard) instabilities occurring when condition \eqref{hypcond} is violated, let consider the linearization around $(\eta,u)=(-c,0)$ where $c$ is a positive constant. The linearized system has eigenvalues $\lambda_{\pm}$ where

 $$\lambda_{\pm}=\pm i\xi\left(\frac{\tanh \sqrt \epsilon \xi}{\sqrt \epsilon \xi}-c\epsilon\right )^{1/2}.$$
 
 All modes are unstable when $\epsilon >1/c$ while when $\epsilon<1/c$ all modes corresponding to $|\xi|\geq \frac{x_{c,\epsilon}}{\sqrt \epsilon}$ are unstable where $x_{c,\epsilon}$ is the unique positive solution of  $\frac {\tanh x}{x}=c\epsilon.$ Note that $x_{c,\epsilon}\to \infty$ as 
$\epsilon \to 0.$  Thus the  set of stable modes get larger and larger  when for a fixed $c,$ the small parameter $\epsilon$ tends to $ 0.$

 When the nonlocal term is removed, all modes are unstable, for any $c>0.$ The effect of dispersion is thus here to create a range of linearly stable modes when $c<1,$ getting larger and larger when $\epsilon \to 0.$

\vspace{0.3cm}
Things are  different for Boussinesq-Whitham systems in presence of surface tension since dispersive effects play a significant role here. 

In fact, surface tension prevents the appearance of (Hadamard) unstable modes for the linearized system at $(\eta,u)=(-c,0)$.

Actually the linearized system  eigenvalues are now

$$
\lambda_{\pm}=\pm i\xi\left((1+\beta\epsilon \xi^2)\frac{\tanh \sqrt \epsilon \xi}{\sqrt \epsilon \xi}-c\epsilon\right )^{1/2}.
$$

Whatever the values of $c$ and $\epsilon$ the possible unstable modes are bounded and there are no more Hadamard instabilities.

The effect of surface tension is thus to suppress the Hadamard instabilities. 
%reduce the range of unstable modes. In particular, when $c>1/\epsilon$ sufficiently high modes are stable.

\vspace{0.3cm}
A convenient way to get an idea of the nature of the full system  is to 
derive  an equivalent system by diagonalizing the linear part of the system. More precisely, we define

\begin{displaymath}
\widehat{A}(\xi)=i\xi\begin{pmatrix} 0 & (1+\beta \epsilon \xi^2)\frac{\tanh \sqrt \epsilon|\xi|}{\sqrt \epsilon |\xi|}  \\
                 1  & 0  \\

                  \end{pmatrix}
\end{displaymath}
the Fourier transform of the dispersion matrix with eigenvalues 
$$
\pm i\xi(1+\beta \epsilon \xi^2)^{1/2}\left (\frac{\tanh \sqrt \epsilon|\xi|}{\sqrt \epsilon |\xi|}\right )^{1/2} .
$$
Setting
\begin{displaymath}
U=\begin{pmatrix} \eta\\u
\end{pmatrix}
\end{displaymath}
and
\begin{displaymath}
W=\begin{pmatrix} \zeta \\ v
\end{pmatrix}=P^{-1}U, \quad
P^{-1}=\frac{1}{2}\begin{pmatrix} \tilde{\mathcal L_\epsilon}^{-1} & 1 \\
 \tilde{\mathcal L_\epsilon}^{-1}& -1
\end{pmatrix},\; P=
\begin{pmatrix} \tilde{\mathcal L_\epsilon}&\tilde{\mathcal L_\epsilon}\\
1&-1
\end{pmatrix}
\end{displaymath}
the linear part of  the system is diagonalized as
$$
W_t+\partial_xDW=0,
$$
where
\begin{displaymath}
D=\begin{pmatrix}  \tilde{\mathcal L_\epsilon}&0\\
0&- \tilde{\mathcal L_\epsilon}
\end{pmatrix},
\end{displaymath}
that is two dispersive equations of order $\frac{1}{2}.$

\vspace{0.3cm}

In the $W$ variable the complete system writes now
\begin{equation}\label{newFD1d}
\partial _t W+\partial_x DW+\epsilon N_\epsilon(W)=0,
\end{equation}
where
$$
N_\epsilon(W)=\begin{pmatrix} \mathcal L_\epsilon^{-1}[(\zeta -v)\mathcal L_\epsilon\partial _x(\zeta+v))+\mathcal L_\epsilon(\zeta+v)\partial_x(\zeta-v)]+(\zeta-v)\partial_x (\zeta-v)\\
 \mathcal L_\epsilon^{-1}[(\zeta -v)\mathcal L_\epsilon\partial _x(\zeta+v))+\mathcal L_\epsilon(\zeta+v)\partial_x(\zeta-v)]-(\zeta-v)\partial_x(\zeta-v)\end{pmatrix}.
 $$ 

While the linear part is of order $\frac{1}{2},$ the nonlinear terms are formally of order $1.$ However the structure of the  nonlocal nonlinear terms  might prevent the  implementation of  the methods used in \cite{LPS2}  for fractional type KdV equations to obtain the local well-posedness of the Cauchy problem on time scales of order $1/\sqrt \epsilon.$

On the other hand one might think of proving the local well-posedness by using an elementary energy method, reminiscent of the one 
used in \cite{SWX} for the $(-1,0,0,0)$ Boussinesq system (see 
\cite{SWX} Theorem 3.1) or a symmetrization technique as in 
\cite{SWX} Theorem 4.5. However those methods do not seem to work 
since  a control of the $H^s\times H^{s+1/2},\quad s>\frac{1}{2}$ norm on $(\eta,u)$ is not enough to control the nonlinear terms (in case of the $(-1,0,0,0)$ Boussinesq system one gets a control  on the $H^s\times H^{s+1}$ norm).

\section{Finite time blow-up for the Whitham equation}

It has been established in \cite{Hu} that solutions of the Whitham 
equation \eqref{Whit} may blow-up in finite time, the blow-up being "shock-like", that is blow-up of the gradient with bounded $L^\infty$ norm. The numerical simulations in \cite{KS} display the cusp nature of the singularity depending on the sign of the initial data. On the other hand those works consider the Whitham equation \eqref{Whit} with $\epsilon=1$, thus they do not address the long-wave, KdV limit. In particular the crucial dependence of the blow-up time with respect to $\epsilon$ is not obtained. Note that this blow-up time should be larger that $1/\epsilon,$ the time scale on which the KdV and Whitham equations are close and asymptotic models for the propagation of long, weakly nonlinear surface water waves for which no singularities are expected.

Note also that a quite different finite time blow-up is expected for the Whitham equation with surface tension \eqref{WhitST} that cannot be anymore viewed as a "weak" dispersive perturbation of the Burgers equation. In fact,  as was already noticed, it is reminiscent for large frequencies of the $L^2$ critical fractional KdV equation

\begin{equation}\label{cfKdV}
u_t+u_x+uu_x-D^{\frac{1}{2}}u_x=0
\end{equation}
for which a finite time blow-up {\it \`a la Martel-Merle \cite{MM, MP}} is expected (but not yet proven), see the numerical simulations in \cite{KP, KP2, KS}.

\section{Solitary waves}
\subsection{Whitham without surface tension}
We will focus here on non periodic solitary wave solutions of the Whitham equation \eqref{Whit}, that is solutions of the form  $u(x-ct).$

 We refer to \cite{BKN, EK, EK2, HJ, SKCK} for interesting (theoretical and numerical) studies on {\it periodic} traveling waves. In particular the existence of a global bifurcation branch of $2\pi-$ periodic smooth traveling wave solutions is established in \cite{EK2}.

The existence of solitary wave solutions to the Whitham equation \eqref{Whit} decaying to zero to infinity and close to the KdV soliton has been proven in \cite{EGW} by variational methods. Symmetry and decay properties of Whitham solitary waves are established in \cite{BEP}. The analysis in \cite{EGW} is made on \eqref{Whit} with $\epsilon =1$ under the scaling

$$u(x)=\epsilon^\alpha w(\epsilon^\beta x)$$

where $2\alpha-\beta=1$ so that $\frac{1}{2}\int_\R u^2=\epsilon$.

%\textcolor{red}{TO BE COMPLETED?}

An interesting issue is that of the transverse stability of the Whitham solitary wave in the framework of the full dispersion KP equation \eqref{FDbis}. Considering the similar problem for the usual KP I/II equations (see \cite{RT} and the references therein) one can conjecture that  the Whitham solitary wave is transversally stable when the surface tension parameter $\beta$ is less than $\frac{1}{3}$ and unstable otherwise. We intend to go back to this issue in a subsequent paper.

\vspace{0.5cm}
We do not know of any rigorous result on the existence of solitary wave solutions to  the full dispersion  Boussinesq system.

The following computations give some evidence to the existence of solitary wave solutions "close" to the KdV soliton.

Recall that the system reads

\begin{align}
    \eta_{t}+T^{2}_{\epsilon}u_{x}+\epsilon (\eta u)_{x}& =0,
    \nonumber\\
    u_{t}+\eta_{x}+\frac{\epsilon}{2}(u^{2})_{x} & =0
    \label{sys},
\end{align}
where
$\hat{T}_{\epsilon}=(\tanh(k\sqrt{\epsilon})/(k\sqrt{\epsilon}))^{1/2}$. In
the formal limit $\epsilon\to 0$, the system reduces to the wave equation.

We are here interested in localized travelling wave solutions to the
system (\ref{sys}). Putting $u(x,t)=U(x-ct)$ and $\eta(x,t)=N(x-ct)$,
we find after integration
\begin{align}
    -cN+T^{2}_{\epsilon}U+\epsilon NU& =0,
    \nonumber\\
    N & =cU-\frac{\epsilon}{2}U^{2}
    \label{travelsys}.
\end{align}
Eliminating $N$ from the first equation of (\ref{travelsys}) via  the
second, we find
\begin{equation}
    (T^{2}_{\epsilon}-c^{2})U+\frac{3\epsilon
    c}{2}U^{2}-\frac{\epsilon^{2}}{2}U^{3}=0.
    \label{travel}
\end{equation}

Writing $c = 1+\alpha \epsilon$ with $\alpha>0$, we get, by performing a formal expansion in $\epsilon$ and neglecting terms of  order $\epsilon$ or higher in
(\ref{travel})
\begin{equation}
    \frac{1}{3}U''-2\alpha U+\frac{3}{2}U^{2}=0
    \label{travele},
\end{equation}
which gives after integration
\begin{equation}
    (U')^{2}=6\alpha U^{2}-3U^{3}
    \label{traveleint}
\end{equation}
which has the KdV soliton
\begin{equation}
   U= 2\alpha
    \mbox{sech}^{2}\left(\sqrt{\frac{3\alpha}{2}}(x-ct)\right)
    \label{sol}
\end{equation}
as a solution.
Note that for a solution $(N,U)$ with a given velocity $c$ of the above 
equations,  $(-N, U)$ provides a solution for the same system with velocity
$-c$. 

\vspace{0.5cm}
Similarly, a solitary wave $u(x,t)=U(x-ct)$ of the Whitham equation \eqref{Whit} satisfies the equation
\begin{equation}
    \frac{\epsilon}{2}U^{2}+(T_{\epsilon}-c)U=0.
    \label{wtravel1}
\end{equation}

\begin{remark}
Following the method used in \cite{BEP} one can prove that solutions of \eqref{wtravel1} that tend to $0$ as $|x|\to \infty$ decay to $0$ exponentially
in the sense that for some $\nu >0,$  $e^{\nu | \cdot |}\phi \in L^1(\R)\cap L^\infty(\R).$
\end{remark}

Writing $c=1+\delta \epsilon$ where $\delta>0$ is a constant independent
of $\epsilon$, we get, neglecting terms  of  order $\epsilon$ or higher in the formal expansion in $\epsilon$,
\begin{equation}
    -\frac{1}{2}U^{2}-\frac{1}{6}U''+\delta U=0,
    \label{wtravel2}
\end{equation}
which gives after integration
\begin{equation}
   ( U')^{2}=-2U^{3}+6\delta U^{2}
    \label{wtravel3}.
\end{equation}
Thus we find again the KdV soliton
\begin{equation}
    U=3\delta
    \mbox{sech}^{2}\left(\sqrt{\frac{3\delta}{2}}(x-ct)\right).
    \label{wtravel4}
\end{equation}

\vspace{0.5 cm}
\begin{remark}
The above formal considerations suggest that one could obtain the existence of solitary wave solutions to both the Whitham equation and the Whitham system by perturbations arguments starting from the KdV soliton. In particular in the case of the Whitham equation this would give an alternative proof to the one in \cite{EGW}.
%\textcolor{red}{IN BOTH CASES IT COULD BE POSSIBLE TO USE THOSE OBSERVATIONS TO PROVE THE EXISTENCE OF SOLITARY WAVES CLOSE TO THE KDV ONES %BY A PERTURBATION ARGUMENT, IMPLICIT FUNCTIONS LIKE. THIS WOULD GIVE A SIMPLER PROOF THAN THE EHRNSTROM ET AL ONE}
\end{remark}
\subsection {Whitham with surface tension}

The situation is quite different for the Whitham equation with surface tension \eqref{WhitST}.

A solitary wave solution $u(x,t)=U(x-ct)$ of \eqref{WhitST} should satisfy the equation

\begin{equation}\label{SWST}
\frac{\epsilon}{2}U^{2}+(\tilde T_{\epsilon}-c)U=0
\end{equation}

where $\tilde T_\epsilon$ is defined by \eqref{dispST}.

The existence of localized non trivial solutions of \eqref{SWST} results from Theorem 2.1 in \cite{Ar}. Since \eqref{WhitST} (as the fKdV equation with $\alpha=\frac{1}{2}$) is $L^2$ critical one expects the instability of those solitary waves by blow-up, similarly to the generalized KdV equation with $p=4$, see \cite{MM} or to the modified Benjamin-Ono equation (\cite{MP}).

\vspace{0.3cm}
On the other hand, a solitary wave solution $(\eta(x-ct), u(x-ct))$ of the full-dispersion Boussinesq system should satisfy

\begin{equation}
    \label{SWFD1d}
    \left\lbrace
    \begin{array}{l}
    -c\eta+\mathcal P_\epsilon ^2u_x+\epsilon \eta u=0 \\
     -cu+ \eta\frac{\epsilon}{2} u^2=0,
 \end{array}\right.
    \end{equation}
 where  $\mathcal P_\epsilon=(I+\beta \epsilon |D|^2)\left(\frac{\tanh(\sqrt \epsilon |D| )}{\sqrt \epsilon |D|}\right ).$

   Eliminating $\eta$ from the second equation in \eqref{SWST} yields the single equation
   
   \begin{equation}\label {SWFD2}
   -c^2 u+\mathcal P_\epsilon u +\frac{3}{2}c^2 u^2-\frac{\epsilon^2}{2}u^3=0.
   \end{equation}
   
   One cannot apply directly the result of \cite{Ar} to prove the existence of non trivial solutions to \eqref{SWFD2} since the nonlinear term is not homogeneous.We plan to come back to this issue in a next paper.

\vspace{0.5cm}
%\textcolor{red}{TO BE COMPLETED}

%\textcolor{red}{TO DO : NOTION OF GROUND STATE, UNIQUENESS  A LA FRANK-LENZMAN}

\section{Numerical simulations for the Whitham equation}

The simulations in this section will illustrate several aspects of the dynamics of the Whitham equation alluded to in the previous sections and will also give evidence for some facts not yet rigorously proven.

\subsection{Numerical construction of solitary waves for the Whitham 
equation}
In this subsection we construct numerically solitary wave solutions to 
the Whitham equations, i.e., localized solutions of (\ref{wtravel1}). 
To this end we use the same technique as in \cite{KS}: since the 
wanted solutions are expected to be rapidly decreasing, it is 
convenient to formulate the problem as an essentially periodic one 
with a sufficiently large period such that the solutions and the 
numerically interesting derivatives vanish with numerical accuracy at 
the domain boundaries $\pm L\pi$ (we typically choose $L=5$ in the 
following). The Fourier transforms are approximated in standard way 
via discrete Fourier transforms conveniently implemented via the \emph{Fast 
Fourier transform} (FFT). 

With this approach equation (\ref{wtravel1}) is approximated via a 
finite dimensional (we use $N$ Fourier modes) nonlinear equation 
system formally written as 
\begin{equation}
    \mathcal{F}(\hat{U})=0
    \label{NK},
\end{equation}
where $\hat{U}$ denotes the discrete Fourier transform of $U$. This 
nonlinear system 
is solved with a Newton-Krylov iteration. This means the 
action of the inverse of the Jacobian of $\mathcal{F}$ on 
$\mathcal{F}$ in the standard Newton iteration is determined 
iteratively via GMRES \cite{GMRES}. As the initial iterate we 
choose the KdV soliton written as  
    $U=3\delta
    \mbox{sech}^{2}\left(\sqrt{\frac{3\delta}{2}}(x-ct)\right)$.
    Note that the reality of $U$ has to be 
enforced during the iteration. 

As an example we study the case $\epsilon=0.01$ and vary the constant 
$\delta$. This is mainly done in order to have for small $\delta$ an 
initial iterate close to the Whitham soliton. For larger $\delta$, we 
use as an initial iterate the solution for a slightly smaller value of 
$\delta$ constructed before. Since we vary $\delta$, 
the choice of $\epsilon$ is, however, no restriction for the general case since the only 
important quantity is the velocity $c$. It is to be expected that 
there will be solitons for $c\sim 1$ because of the analogy to KdV, 
but that for larger values of $c$ the dispersion of the Whitham 
equation becomes in contrast to the KdV equation too weak to 
compensate the nonlinearity. Thus we expect that in contrast to KdV 
there might be an upper limit of $c$ for solitons to the Whitham 
equation. In fact we obtain the situation shown in 
Fig.~\ref{figwhitsol}. For $c=1.01,1.02,1.05,1.1,1.2$ we use 
$N=2^{14}$ Fourier modes, for $c=1.22$ $N=2^{16}$.  
\begin{figure}[htb!]
  \includegraphics[width=0.49\textwidth]{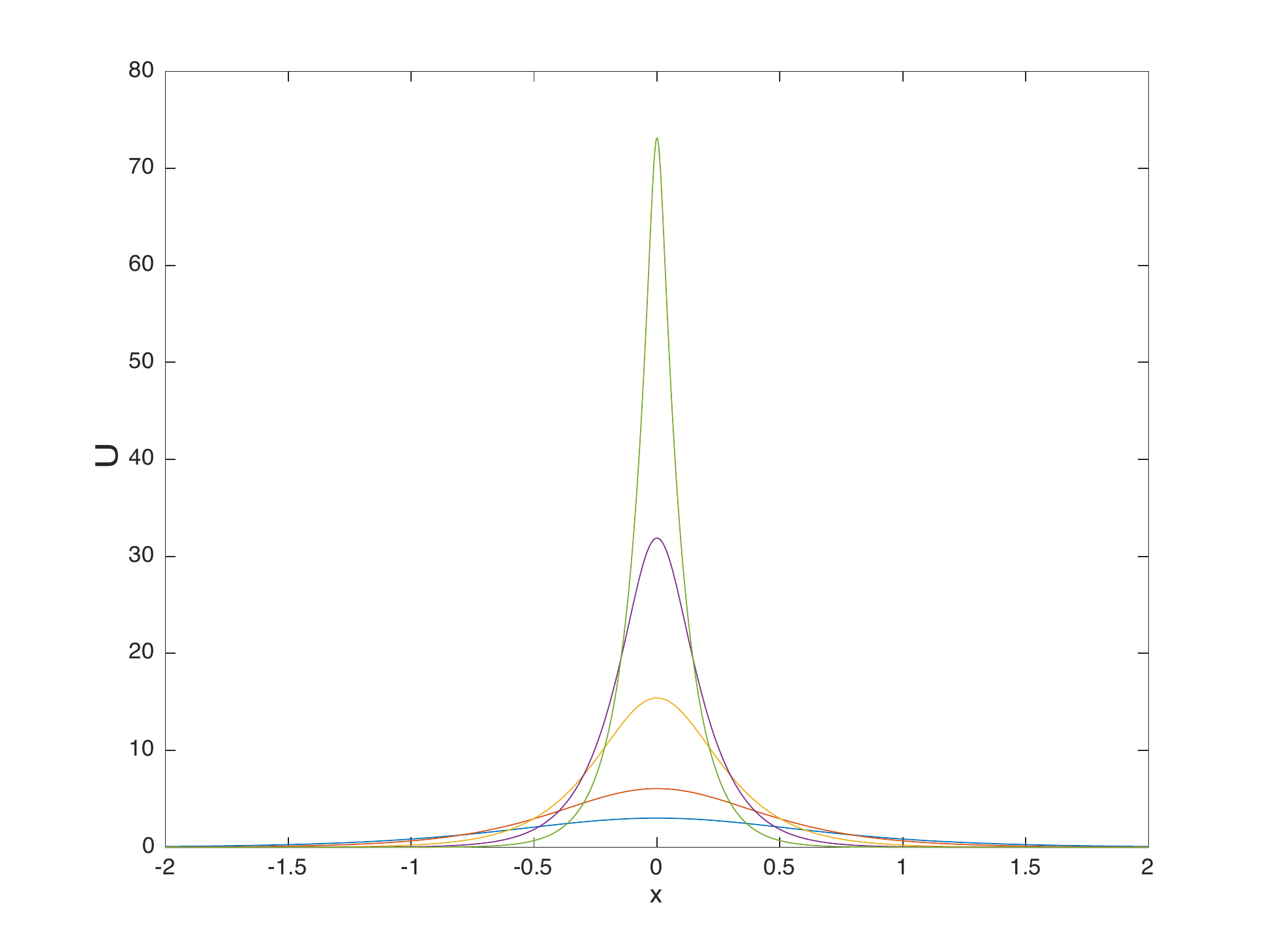}
  \includegraphics[width=0.49\textwidth]{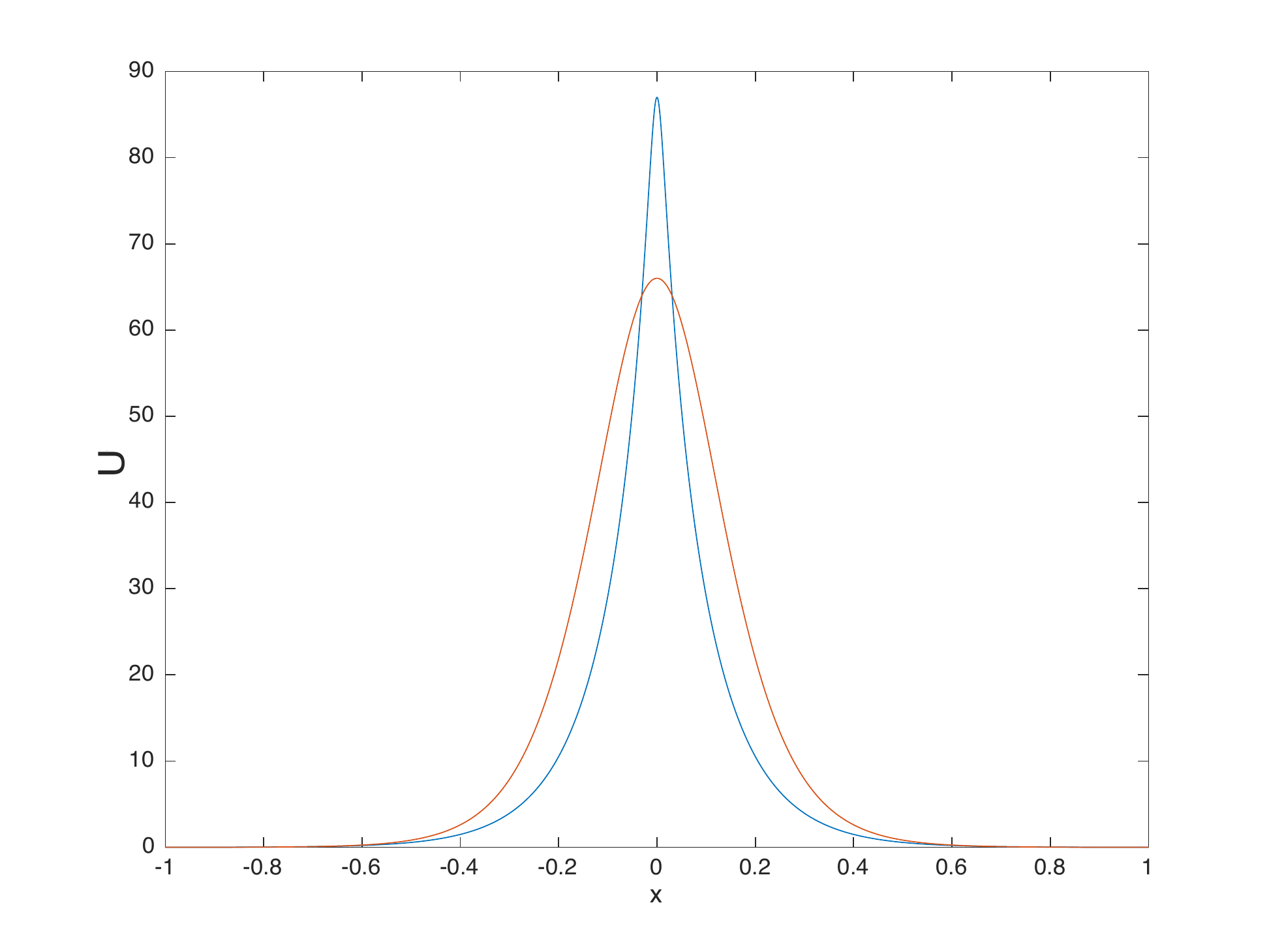}
 \caption{Solitary waves for the Whitham equation (\ref{wtravel1}): 
 on the left the Whitham solitons for $c=1.01,1.02,1.05,1.1,1.2$ (in 
 the order of growing maxima), 
 on the right the Whitham soliton for $c=1.22$ in blue and the 
 corresponding KdV soliton in red.}
 \label{figwhitsol}
\end{figure}

It can be seen 
that maxima of the solution grow as expected, but that the solitons 
become more peaked  and  more compressed compared 
to the corresponding KdV case. In fact we did not succeed to 
construct solitons for $c$ much greater than $1.22$. The failure of an iteration to 
converge obviously does not imply that there will be no longer a 
soliton, it just means that the numerical approach no longer can be 
used. However the fact that the iteration did not converge even for 
larger resolutions, for initial iterates being the numerical solution for 
a slightly smaller $\delta$ and for an iteration with relaxation is a 
strong indication that there might not be a Whitham solitary waves for much larger 
velocities. 

There are additional differences between Whitham and KdV 
solitons. The mass (the $L^{2}$ norm) of the KdV soliton 
(\ref{wtravel4}) is given by $M=4\sqrt{6}\delta^{3/2}$, the mass of 
the Whitham solitons can be seen on the left of 
Fig.~\ref{figwhitsolmass}. As expected the mass is identical for 
small $\delta$ to the one of the KdV soliton. But for larger speeds 
$c$, the mass grows more slowly. The energy of the KdV soliton 
(\ref{wtravel4}) is proportional to  $\delta^{5/2}$, the energy of the 
Whitham soliton can be seen on the right of 
Fig.~\ref{figwhitsolmass}. The energy curve appears to flatten near 
the reachable maximal velocities.
\begin{figure}[htb!]
  \includegraphics[width=0.49\textwidth]{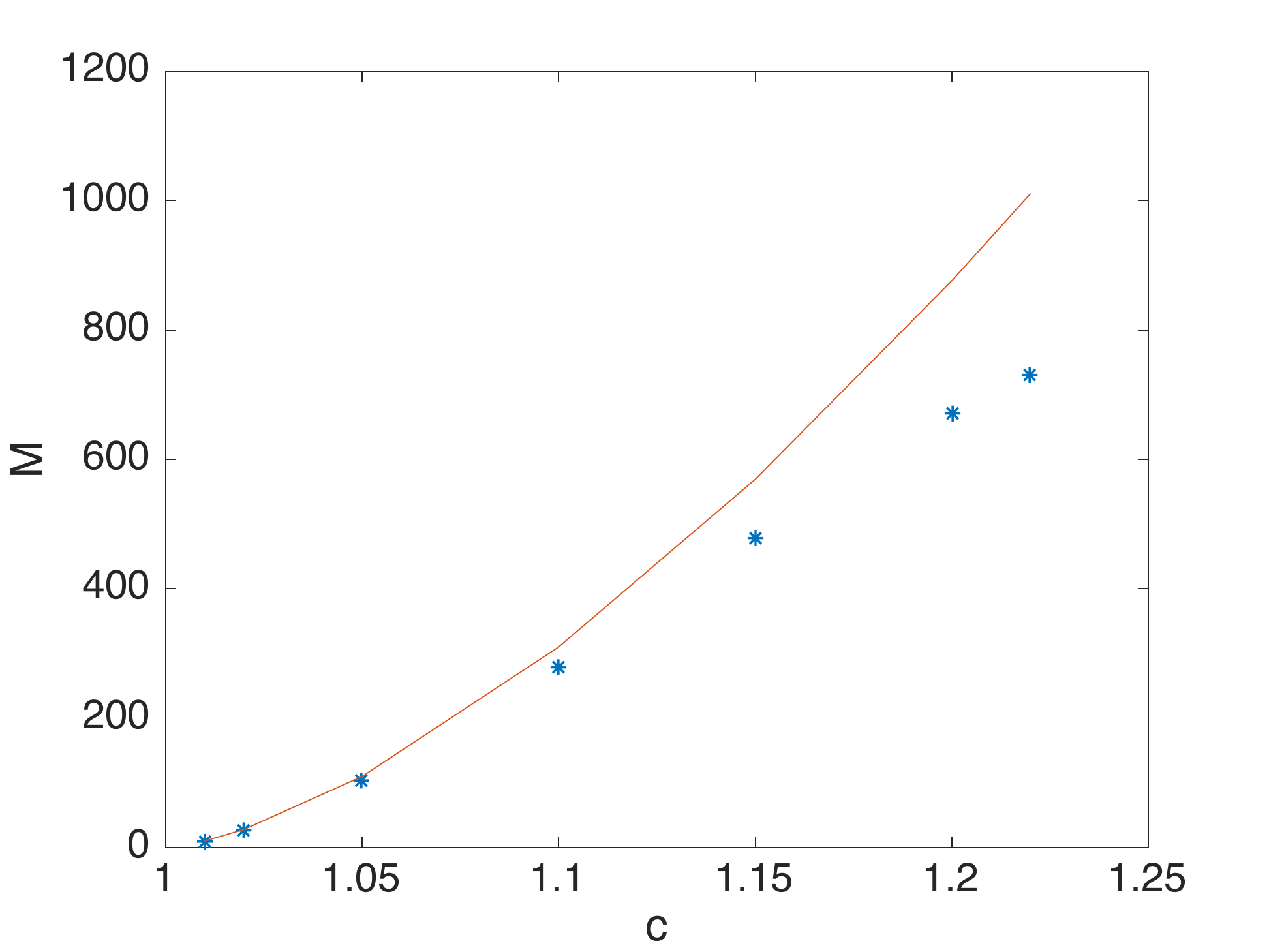}
  \includegraphics[width=0.49\textwidth]{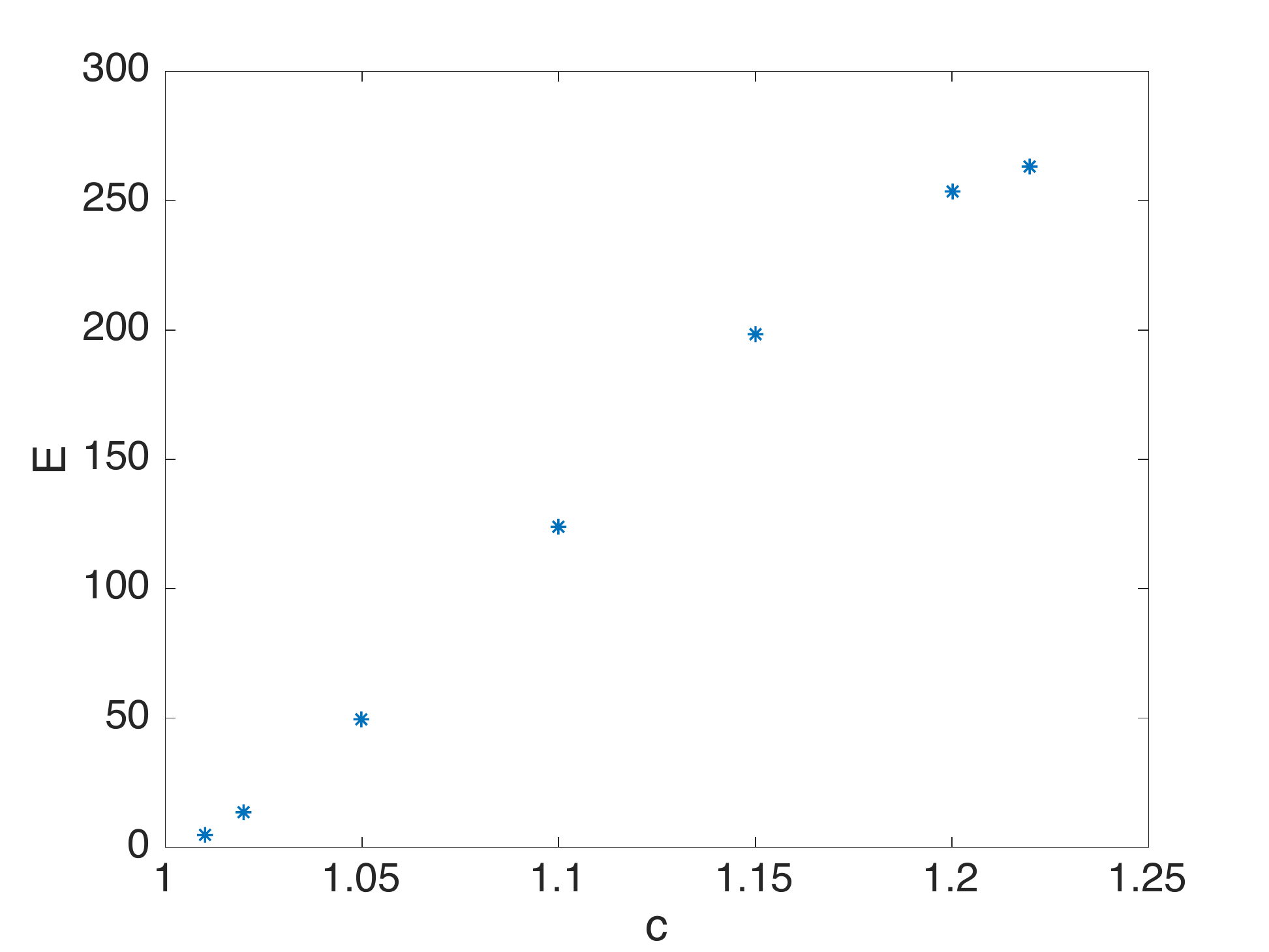}
 \caption{Mass of the solitary waves for the Whitham equation 
 (\ref{wtravel1}) in dependence of $c$ (the continous line gives the mass 
 of the KdV solitons (\ref{wtravel4}))  on the left, and the 
 corresponding energy on the right.}
 \label{figwhitsolmass}
\end{figure}

\subsection{Numerical study of  perturbed Whitham solitons}
In this subsection, we study the time evolution of perturbations of the 
Whitham solitons constructed above. To this end we 
write the Whitham equation for the Fourier transform in a commoving 
frame, i.e.,
\begin{equation}
    \hat{u}_{t}+\frac{\epsilon}{2}ik 
    \widehat{u^{2}}+(\hat{T}_{\epsilon}-c)ik\hat{u}=0
    \label{whithamfourier},
\end{equation}
and approximate the Fourier transform via a discrete Fourier transform as 
before. For the time integration we use as in \cite{KS} an implicit 
Runge-Kutta method of fourth order with a fixed point iteration, see 
\cite{KS} for details. The accuracy of the solution is controlled 
via the Fourier coefficients which should decrease exponentially to 
machine precision (we work here in double precision which allows a 
maximal accuracy of $10^{-16}$) for smooth functions and via the 
conservation of the energy
\begin{equation}
    E = 
    \int_{\mathbb{R}}^{}\left[\frac{1}{2}(\sqrt{T_{\epsilon}-c}u)^{2}-
    \frac{\epsilon}{6}u^{3}\right]dx
    \label{whithamE}.
\end{equation}
Due to unavoidable numerical errors, this energy does numerically 
evolve with time, but thus provides an estimate of the numerical 
error. As shown in \cite{etna}, conserved quantities typically 
overestimate the $L^{\infty}$ error (the maximum of the difference 
between numerical and exact solution) by 1-2 orders of magnitude.

As a first test of the code, we take the numerically constructed 
soliton with $\epsilon=0.01$ and $c=1.2$ of the previous subsection as 
initial data. In the used commoving frame, the solitons should 
correspond to a stationary solution. We use $N_{t}=10^{4}$ time steps 
for $t\in[0,10]$ and find that the difference between the evolved 
solution and the initial data is of the order of $10^{-13}$, the 
order of the error with which the equation (\ref{wtravel1}) had been 
solved in the previous section. The numerically computed relative 
energy $1-E(t)/E(0)$ is of the order of $10^{-15.5}$. 

We first consider as initial data the soliton with $\epsilon=0.01$ 
% and $c = 1.05$ plus a perturbation of a Gaussian $\exp(-x^{2})$. 
% Since the maximum of the soliton is roughly $15$, this corresponds to 
% a perturbation of the order of $7\%$ of the maximum. 
% We use $N_{t}=5*10^{4}$ time 
% steps for $t\in[0,100]$ and get a conservation of the relative energy 
% of the order of $10^{-14}$. 
% We obtain the solution shown in 
% Fig.~\ref{figwhitsolc105gauss}. It can be seen that a slightly faster 
% soliton forms which thus moves in the frame commoving with the 
% unperturbed soliton slowly to the right, whereas the rest of the 
% perturbation is radiated to the left. On the right of 
% Fig.~\ref{figwhitsolc105gauss}, the solution on the left at the last 
% recorded time $t=100$ is shown in a single frame to make this even 
% more explicit. Thus the soliton appears to be stable in this case.
% \begin{figure}[htb!]
%   \includegraphics[width=0.49\textwidth]{}
%   \includegraphics[width=0.49\textwidth]{}
%  \caption{Solution to the Whitham equation with $\epsilon=0.01$ for 
%  initial data the soliton with $c=1.05$ plus a Gaussian 
%  perturbation; on the left the solution in dependence of time, on the 
%  right the solution at the final time $t=100$.}
%  \label{figwhitsolc105gauss}
% \end{figure}
% 
% The situation changes somewhat if we consider as initial data the 
% soliton for $\epsilon=0.01$ and 
$c=1.2$ plus a Gaussian perturbation 
which is  a perturbation of the order of $1\%$ of the maximum of 
the soliton. We use $N_{t}=2*10^{4}$ time steps for $t\in[0,20]$. The 
Fourier coefficients in this case decrease to the order of $10^{-14}$ 
during the whole computation, and the relative energy is conserved to 
the order better than $10^{-14}$. In this case there appears a 
slightly faster soliton travelling to the right in the commoving 
frame, as can be seen in Fig.~\ref{figwhitsolc12gauss}. But in 
addition to radiation travelling to the left, a slower 
soliton travelling to the left in the frame commoving with the 
unperturbed soliton seems to emanate from the initial data. This is 
even more visible in the plot on the right of 
Fig.~\ref{figwhitsolc12gauss} where a close up of the smaller soliton 
is shown for $t=20$. The soliton appears to be stable, the 
perturbation leads to a slightly larger soliton and possibly a 
smaller soliton plus radiation. 
\begin{figure}[htb!]
  \includegraphics[width=0.49\textwidth]{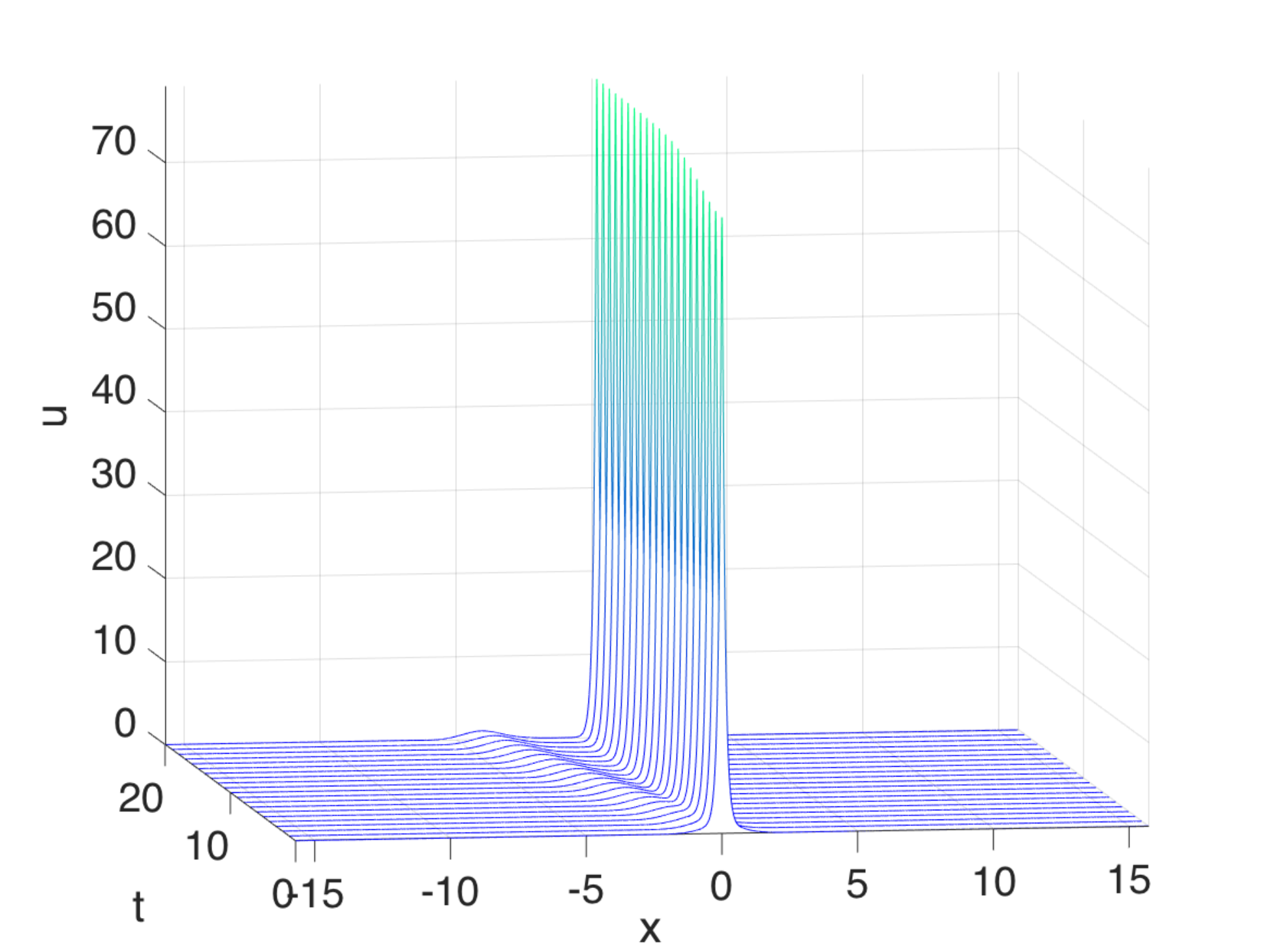}
  \includegraphics[width=0.49\textwidth]{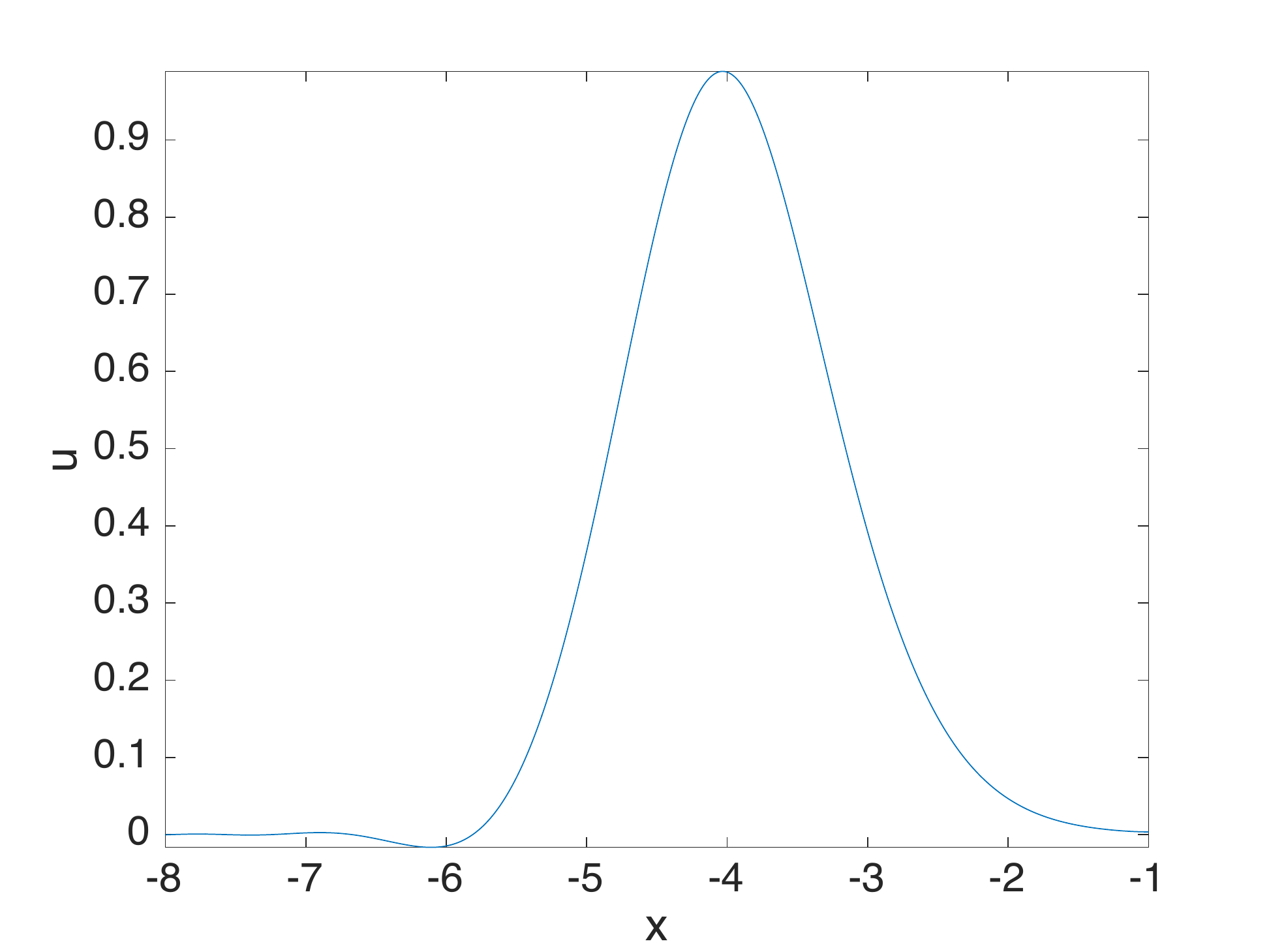}
 \caption{Solution to the Whitham equation with $\epsilon=0.01$ for 
 initial data the soliton with $c=1.2$ plus a Gaussian 
 perturbation; on the left the solution in dependence of time, on the 
 right a close up of the solution at the final time $t=20$.}
 \label{figwhitsolc12gauss}
\end{figure}

The situation changes  if the same soliton as in 
Fig.~\ref{figwhitsolc12gauss} is considered, but this time with a 
perturbation $3\exp(-x^{2})$ corresponding to roughly $4\%$ of the 
maximum of the soliton. We use $N_{t}=10^{4}$ time steps for 
$t\in[0,8]$. In this case the solution appears to develop a cusp. We 
show in Fig.~\ref{figwhitsolc123gauss} the solution and the modulus 
of its Fourier coefficients at $t=7.52$. It can be seen that the 
solution clearly runs out of resolution in Fourier space, already at 
$t\sim 7$, the Fourier coefficients decrease only to $10^{-2}$.
\begin{figure}[htb!]
  \includegraphics[width=0.49\textwidth]{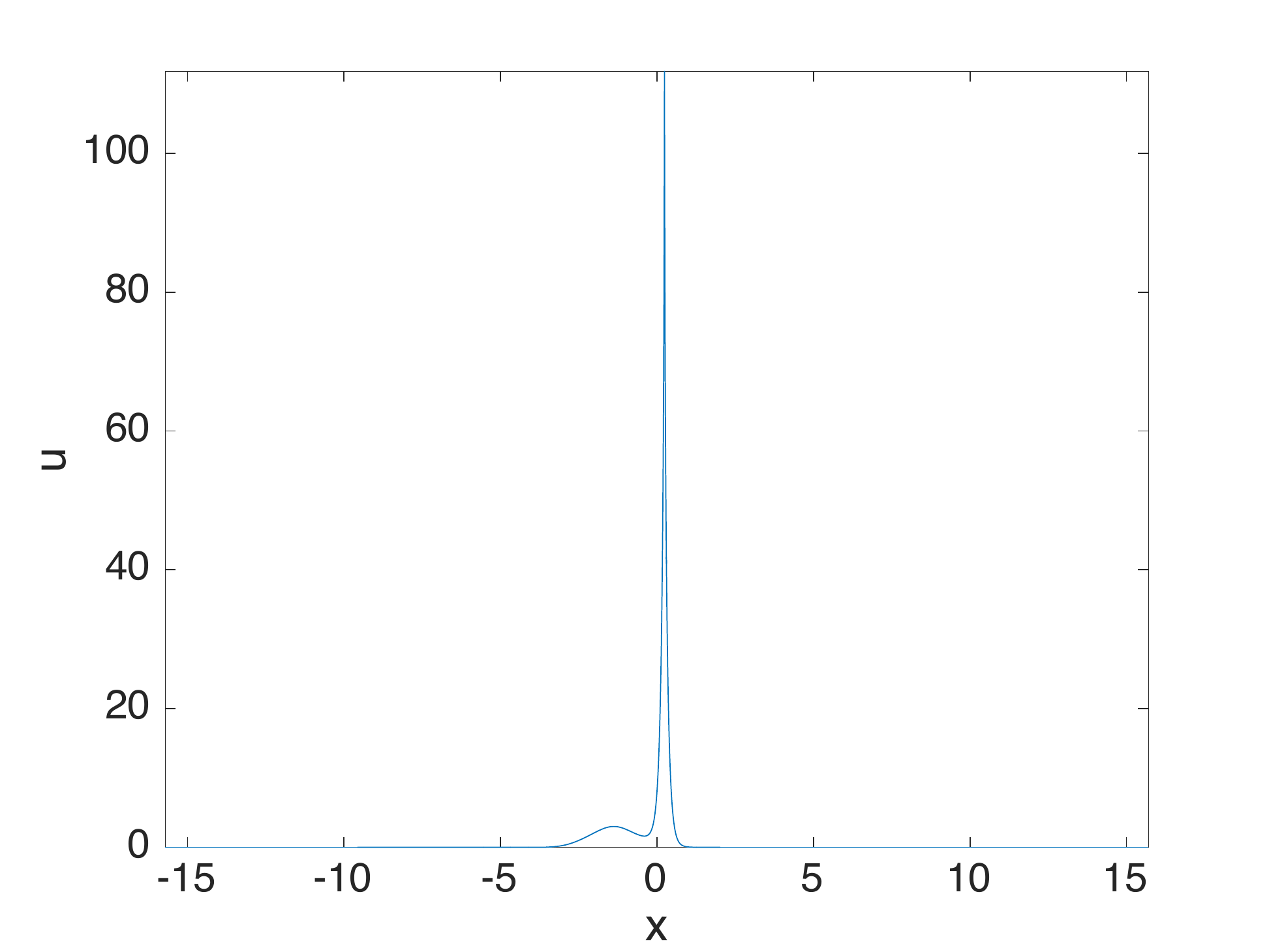}
  \includegraphics[width=0.49\textwidth]{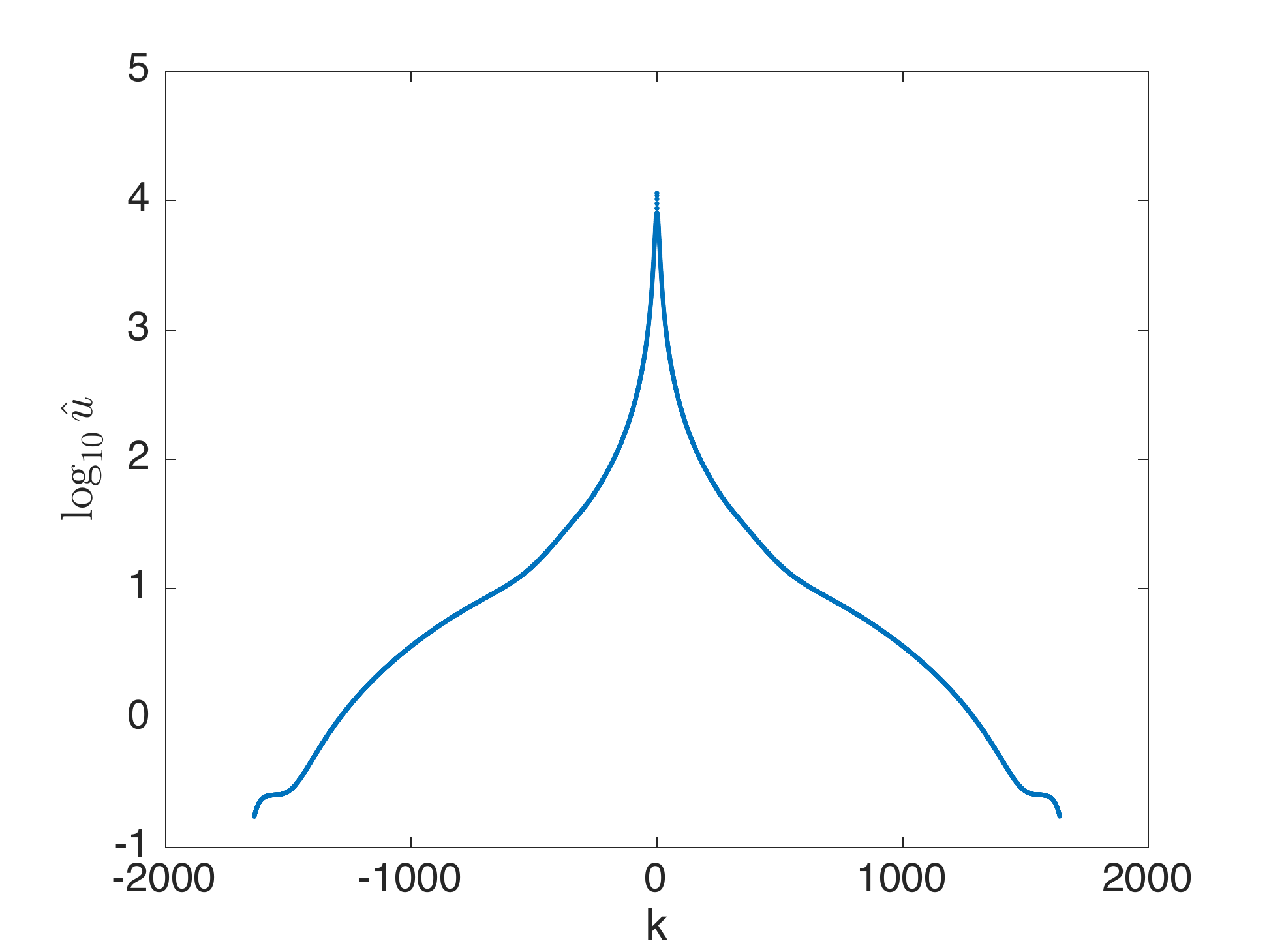}
 \caption{Solution to the Whitham equation with $\epsilon=0.01$ for 
 initial data the soliton with $c=1.2$ plus a  
 perturbation $3\exp(-x^{2})$; on the left the solution for $t=7.52$, 
 on the right the modulus of the Fourier coefficients.}
 \label{figwhitsolc123gauss}
\end{figure}

In Fig.~\ref{figwhitsolc123gaussnorm} it can be seen that the 
$L^{\infty}$ norm of the solution grows clearly beyond the initial 
data, but it is not clear whether it diverges. The same behavior can 
be seen for the $L^{2}$ norm of $u_{x}$ on the right of the same 
figure. A fit 
of the Fourier coefficients as in \cite{SSF,KS} reveals that a 
singularity appears to approach the real axis in the complex 
$x$-plane. The idea of this approach is that an essential singularity 
of the form $(x-x_{s})^{\mu}$, $\mu\notin \mathbb{Z}$, in the complex 
plane for a function $u$ leads to an asymptotic behavior of the 
Fourier transform
\begin{equation}
    |\hat{u}(k)| \propto \frac{1}{k^{\mu+1}} e^{-\delta k},\quad  k \gg 1,
    \label{fit}
\end{equation}
where $\delta=\Im x_{s}$.
For $t=7.547$, this singularity seems to hit the real 
axis ($\delta\to0$) indicating the formation of a cusp (the fitted 
coefficient $\mu$ of 
the essential singularity is positive as in \cite{KS}). Thus there 
seems to be a hyperbolic blow-up in this case. The soliton for 
$c=1.2$ appears therefore to be  unstable against blow-up in the form of a cusp 
for sufficient size of the perturbation. 
\begin{figure}[htb!]
  \includegraphics[width=0.49\textwidth]{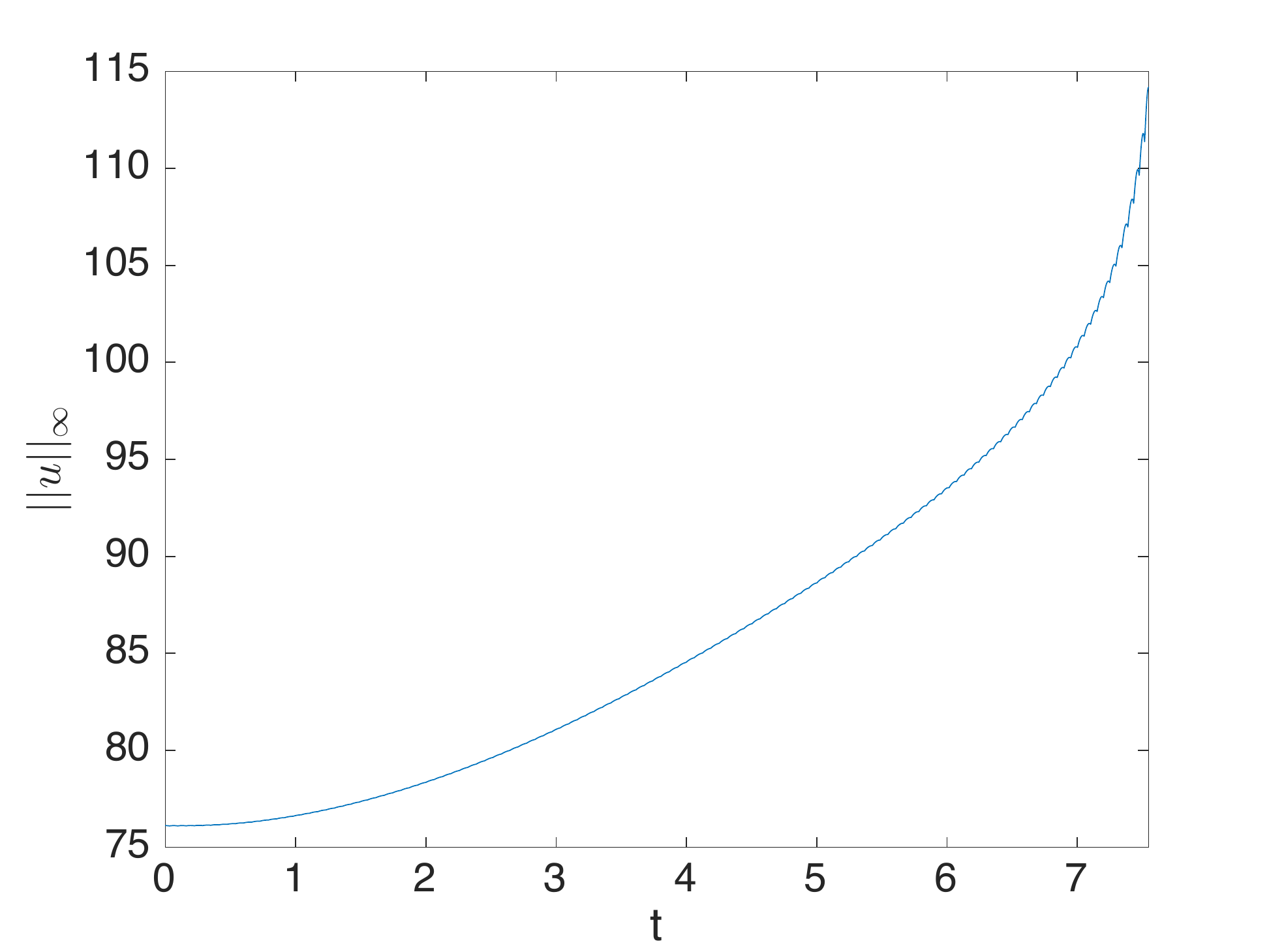}
  \includegraphics[width=0.49\textwidth]{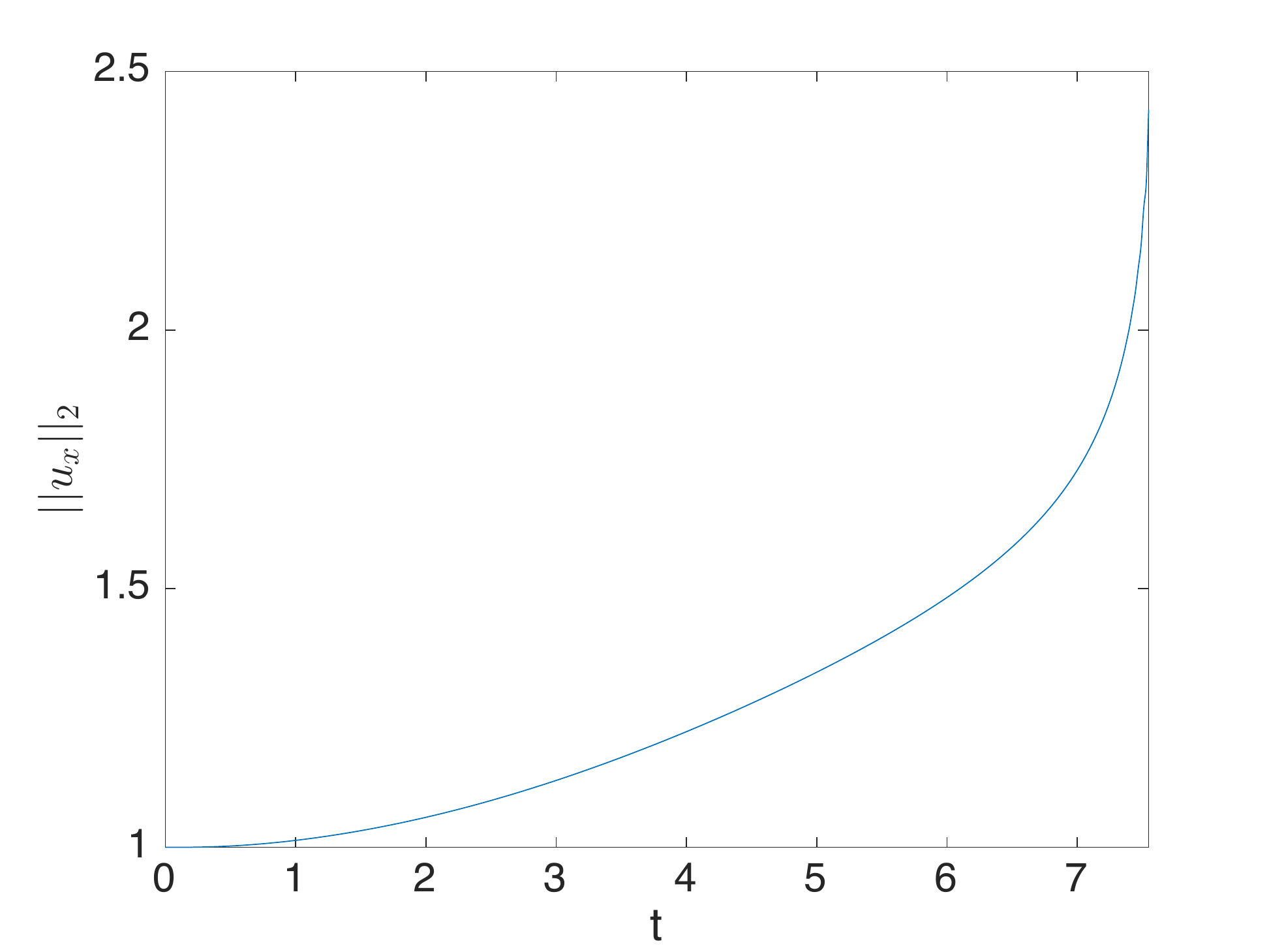}
 \caption{Norms of the solution to the Whitham equation with $\epsilon=0.01$ for 
 initial data the soliton with $c=1.2$ plus a  
 perturbation $3\exp(-x^{2})$; on the left the $L^{\infty}$ norm, 
 on the right the $L^{2}$ norm of $u_{x}$ normalized to 1 for $t=0$.}
 \label{figwhitsolc123gaussnorm}
\end{figure}

Since the solitons of the Whitham equation are rapidly decreasing, it 
is possible to study soliton interactions. In 
Fig.~\ref{figsautwhitham2sol}, we show the solitons with $c=1.05$ and 
$c=1.1$, the latter being centered at $x=-4$ by simply multiplying 
its Fourier coefficients by $e^{4ik}$. The sum of the solutions gives 
two soliton initial data since they both vanish with numerical 
precision where the other takes values above the numerical error. On 
the left of Fig.~\ref{figsautwhitham2sol} we show the solution to the 
Whitham equation for $t\in[0,100]$. The modulus of the Fourier coefficients of the 
solution decreases to $10^{-12}$ during the whole computation. It can 
be seen that the soliton interaction resembles the KdV two-soliton: 
the solitons have almost the same shape after the collision, there is 
just a phase shift (note that the computation is performed in a frame 
commoving with the smaller soliton at $c=1.05$). The close-up of the 
solution at the final time on the right of 
Fig.~\ref{figsautwhitham2sol} reveals, however, that this is not an 
exact two-soliton since there is dispersive radiation propagating to 
the left. This indicates that the Whitham equation is as expected not 
integrable. 
\begin{figure}[htb!]
  \includegraphics[width=0.49\textwidth]{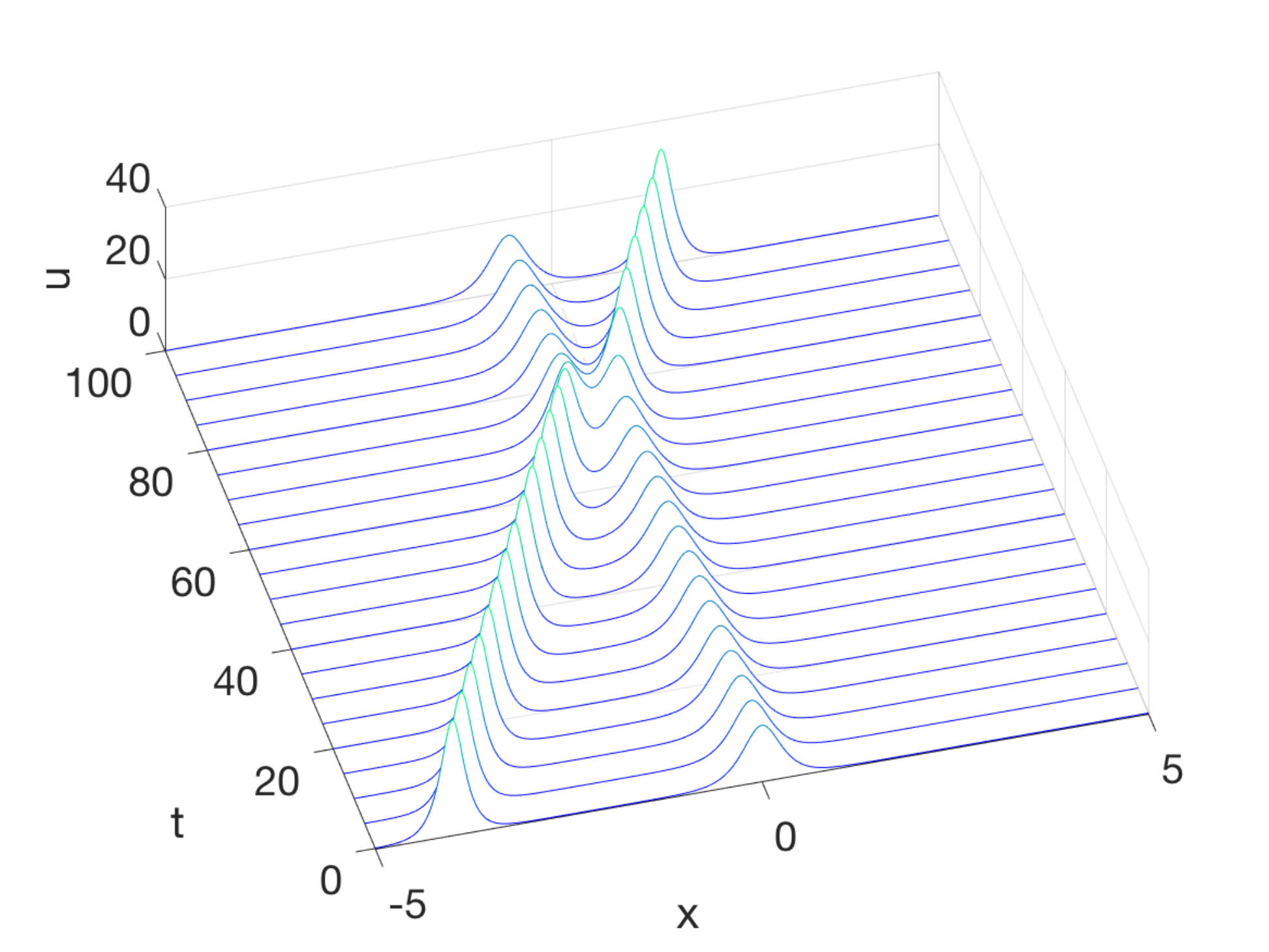}
  \includegraphics[width=0.49\textwidth]{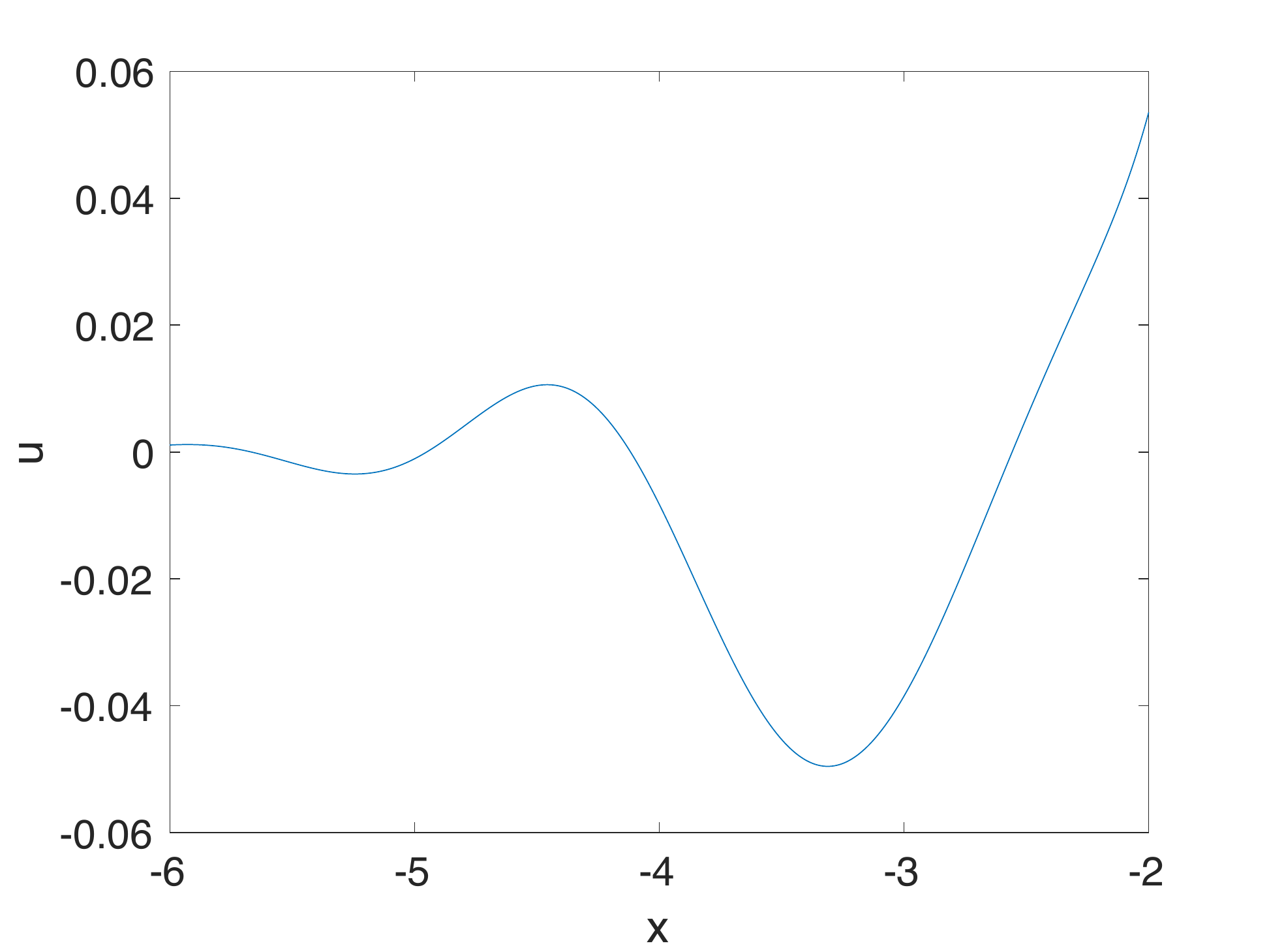}
 \caption{Solution to the Whitham equation for $\epsilon=0.1$ for 
 initial data being the sum of the solitons with $c=1.05$ centered at 
 $x=0$ and for $c=1.1$ centered at $x=-4$ in dependence of time on 
 the left and a close-up of the solution at $t=100$ on the right.}
 \label{figsautwhitham2sol}
\end{figure}

\subsection{Numerical study of Gaussian initial data for different values 
of $\epsilon$}
In this section we study numerically the time evolution of Gaussian 
initial data $u(x,0)=10\exp(-x^{2})$ 
for different values of the parameter $\epsilon$. Since we are 
interested in studying the solutions on time scales of order 
$1/\epsilon$, we 
introduce the rescaled time $\tau=t\epsilon$ in which the Whitham 
equation (\ref{Whit}) reads
\begin{equation}
    u_{\tau}+ uu_{x}+\frac{1}{\epsilon}(T_{\epsilon}-1)u_{x}=0
    \label{whithamtau},
\end{equation}
where we have used  a frame commoving with velocity 1. This 
equation is solved with the same numerical approach as in the 
previous subsection. 

For $\epsilon=1$ we use $N=2^{14}$ Fourier modes and $N_{t}=10^{4}$ 
time steps for $\tau\in[0,0.2]$. The solution appears to develop a cusp 
for $\tau>0.1175$ as can be seen in Fig.~\ref{figwhitgausse1} on the left. 
The Fourier coefficients on the right of the same figure indicate 
a loss of resolution. A fitting of the Fourier coefficients to 
(\ref{fit}) indicates 
that for $\tau_{c}=0.1175$, the parameter $\delta$ indicates the distance 
between a singularity in the complex $x$-plane vanishes, and the 
corresponding parameter $\mu\sim0.3635$ indicating the formation of a 
cusp proportional to $(x-x_{s})^{1/3}$. 
\begin{figure}[htb!]
  \includegraphics[width=0.49\textwidth]{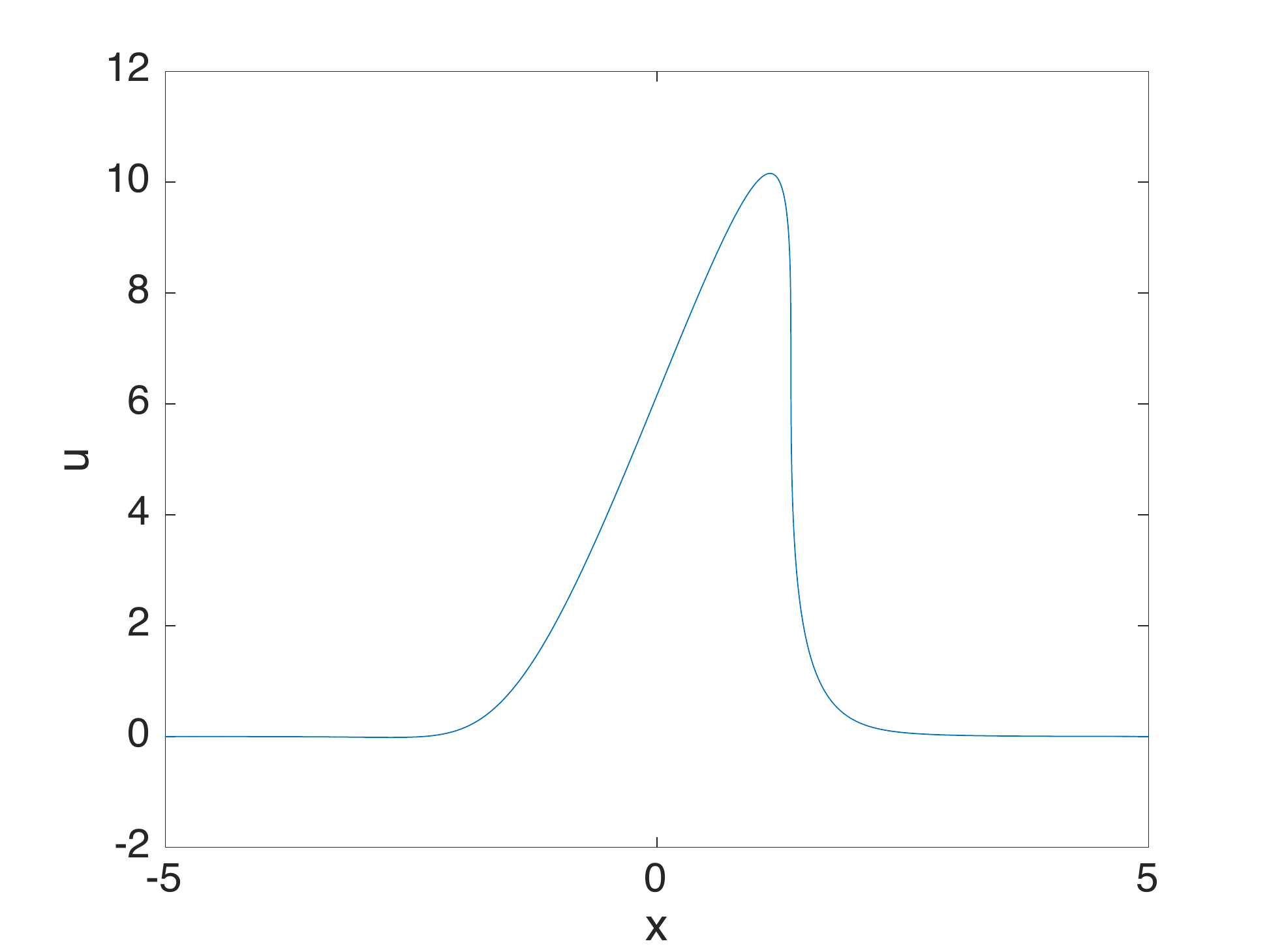}
  \includegraphics[width=0.49\textwidth]{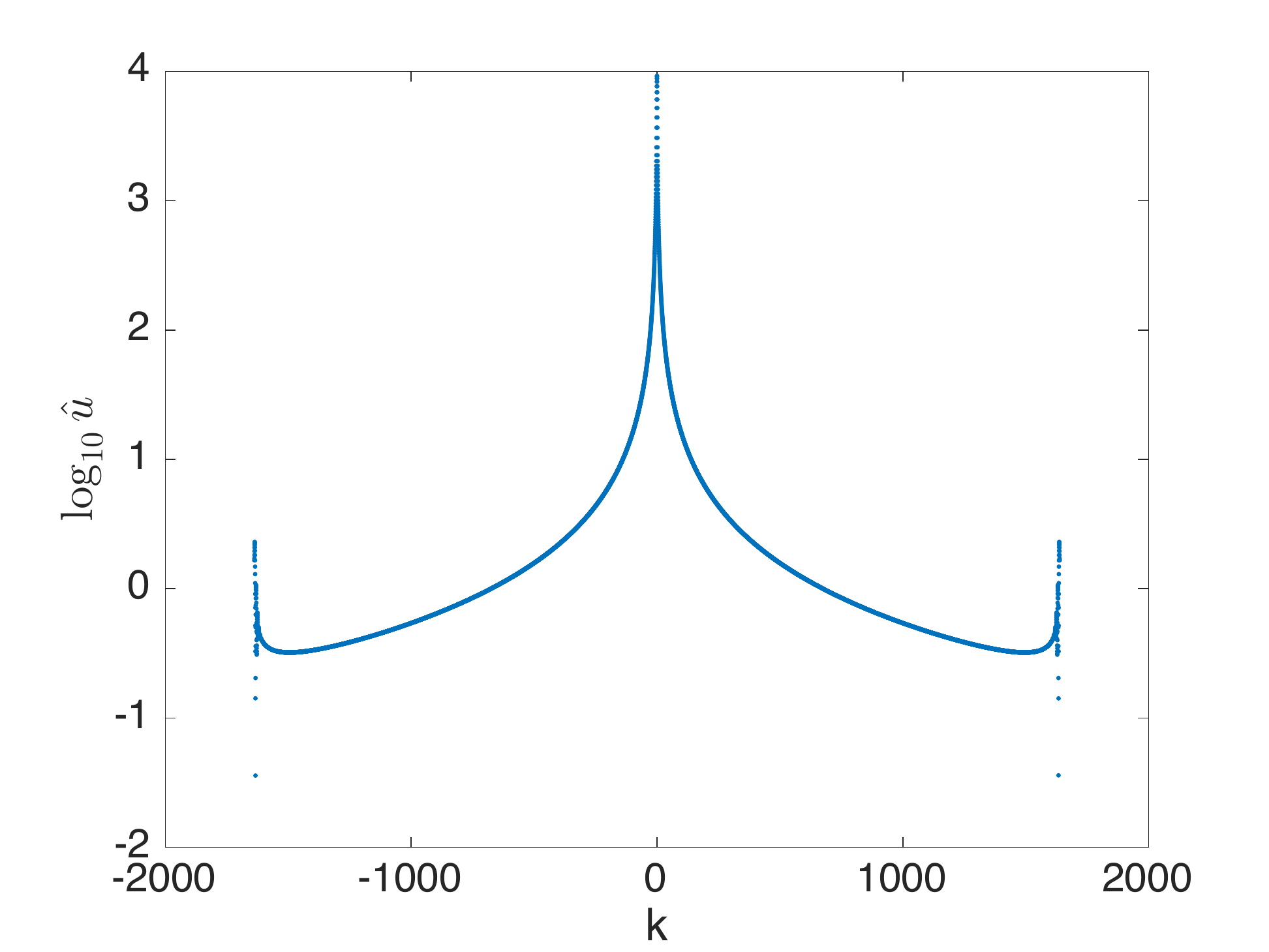}
 \caption{Solution to the Whitham equation with $\epsilon=1$ for 
 initial data  $u(x,0)=10\exp(-x^{2})$; on the left the solution for 
 $\tau=0.1175$, 
 on the right the modulus of the corresponding Fourier coefficients.}
 \label{figwhitgausse1}
\end{figure}

Note that the critical time $\tau_{c}$ above does not indicate the 
exact blow-up time, this just indicates that the singularity in the 
complex plane leading to the asymptotic behavior (\ref{fit}) of the 
Fourier coefficients is too close to the axis to be numerically 
distinguished from 0. If one considers the same initial data as in 
Fig. \ref{figwhitgausse1} for smaller values of $\epsilon$, one 
obtains the critical times and exponents shown in the table in Fig. 
\ref{figwhitgausstable}. It can be seen that the critical times 
    $\tau_{c}$ are always greater than the $\tau_{c}$ for 
    $\epsilon=1$. In fact the critical times appear to grow with 
    decreasing $\epsilon$. This implies, however, that the solutions 
    do not have a blow-up on time scales of order $1/\epsilon$ 
    (recall that $\tau=t\epsilon$). 
    \begin{table}[tbp]
            \centering
\begin{tabular}{|c|c|c|}
    \hline
    $\epsilon$ & $\tau_{c}$ & $\mu$  \\
    \hline
    1 & 0.1175 & 0.3635  \\
    \hline
    0.8 & 0.1178 & 0.3649  \\
    \hline
    0.6 & 0.1184 & 0.3641  \\
    \hline
    0.4 &0.1197 &  0.3619 \\
    \hline
    0.2 &  0.1254 & 0.3499  \\
    \hline
    0.1 &  0.1482 & 0.3855  \\
    \hline
    0.08 &  0.1674 & 0.4133  \\
    \hline
    0.06 &  0.2163 & 0.4274  \\
    \hline
\end{tabular}
 \caption{Critical times $\tau_{c}=\epsilon t_{c}$ 
 and exponents according to (\ref{fit}) for 
 the solutions to the Whitham equation for 
 initial data  $u(x,0)=10\exp(-x^{2})$ for different values of $\epsilon$.}
 \label{figwhitgausstable}
     \end{table}

For values of $\epsilon$ smaller than $0.04$, there does not appear 
to be a blow-up at all. In Fig. \ref{figwhitgausse01} we show the 
solution to the same initial data as in Fig. \ref{figwhitgausse1} for 
$\epsilon=0.01$. We use $N=2^{12}$ Fourier modes for $x\in 
L[-\pi,\pi]$ with $L=10$ and $N_{t}=10^{4}$ time steps for 
$\tau\in[0,2]$. It can be seen that two stable solitons appear which 
gives support to the soliton resolution conjecture in the context of 
the Whitham equation: stable solitons seem to appear in the long time 
behavior of the solution. In addition there is the usual dispersive 
radiation also known from KdV solutions.  The $L^{\infty}$ norm of 
the solution on the right of 
Fig.~\ref{figwhitgausse01} also seems to correspond to a soliton with 
speed greater than 1 (we are in a commoving frame with $c=1$). Neither the 
Fourier coefficients nor the $L^{2}$ norm of  $u_{x}$ indicate the 
formation of a singularity. The latter again indicates 
the appearance of a soliton. 
\begin{figure}[htb!]
  \includegraphics[width=0.49\textwidth]{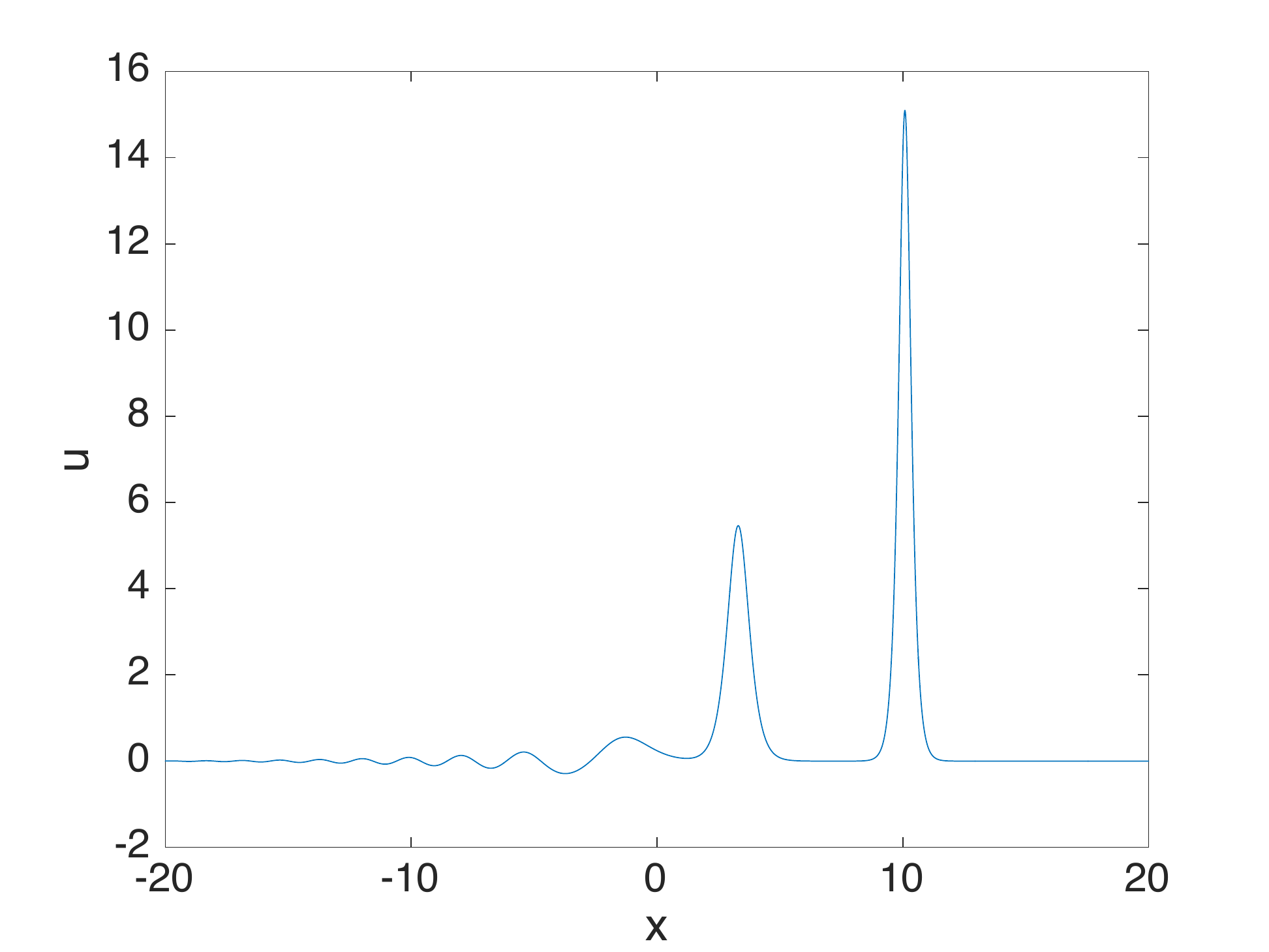}
  \includegraphics[width=0.49\textwidth]{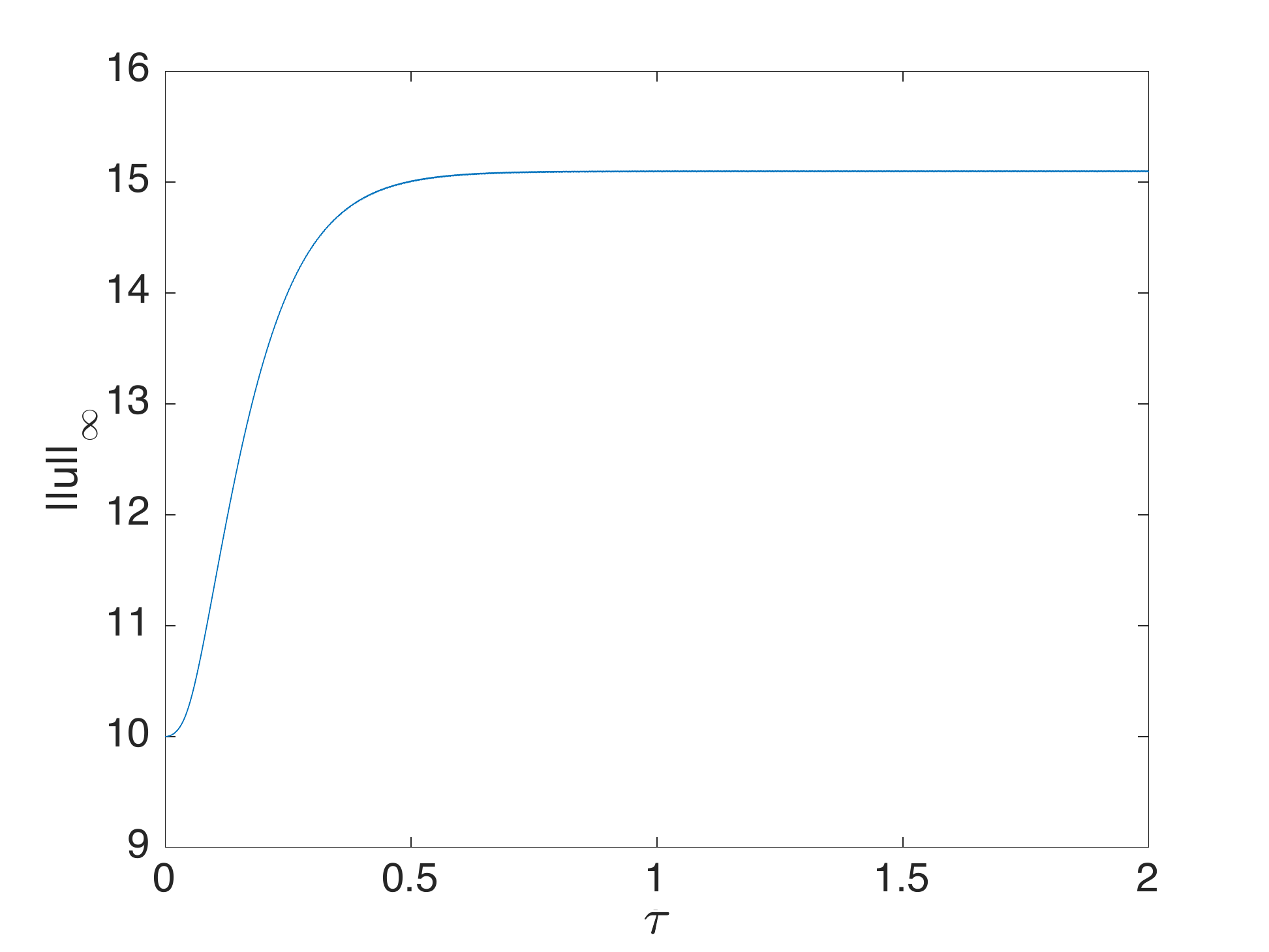}
 \caption{Solution to the Whitham equation with $\epsilon=0.1$ for 
 initial data  $u(x,0)=10\exp(-x^{2})$; on the left the solution for 
 $\tau=2$, 
 on the right the $L^{\infty}$ norm of the solution in dependence of 
 time.}
 \label{figwhitgausse01}
\end{figure}

The solution in Fig.\ref{figwhitgausse01} for small $\epsilon$ 
becomes obviously closer to the solution to the KdV equation 
$u_{\tau}+uu_{x}+u_{xxx}/6$, which can be obtained from the Whitham 
equation in the formal limit $\epsilon\to0$ for the same initial 
data. This KdV solution is shown in 
Fig.\ref{figwhitham10gausskdv}
\begin{figure}[htb!]
  \includegraphics[width=0.7\textwidth]{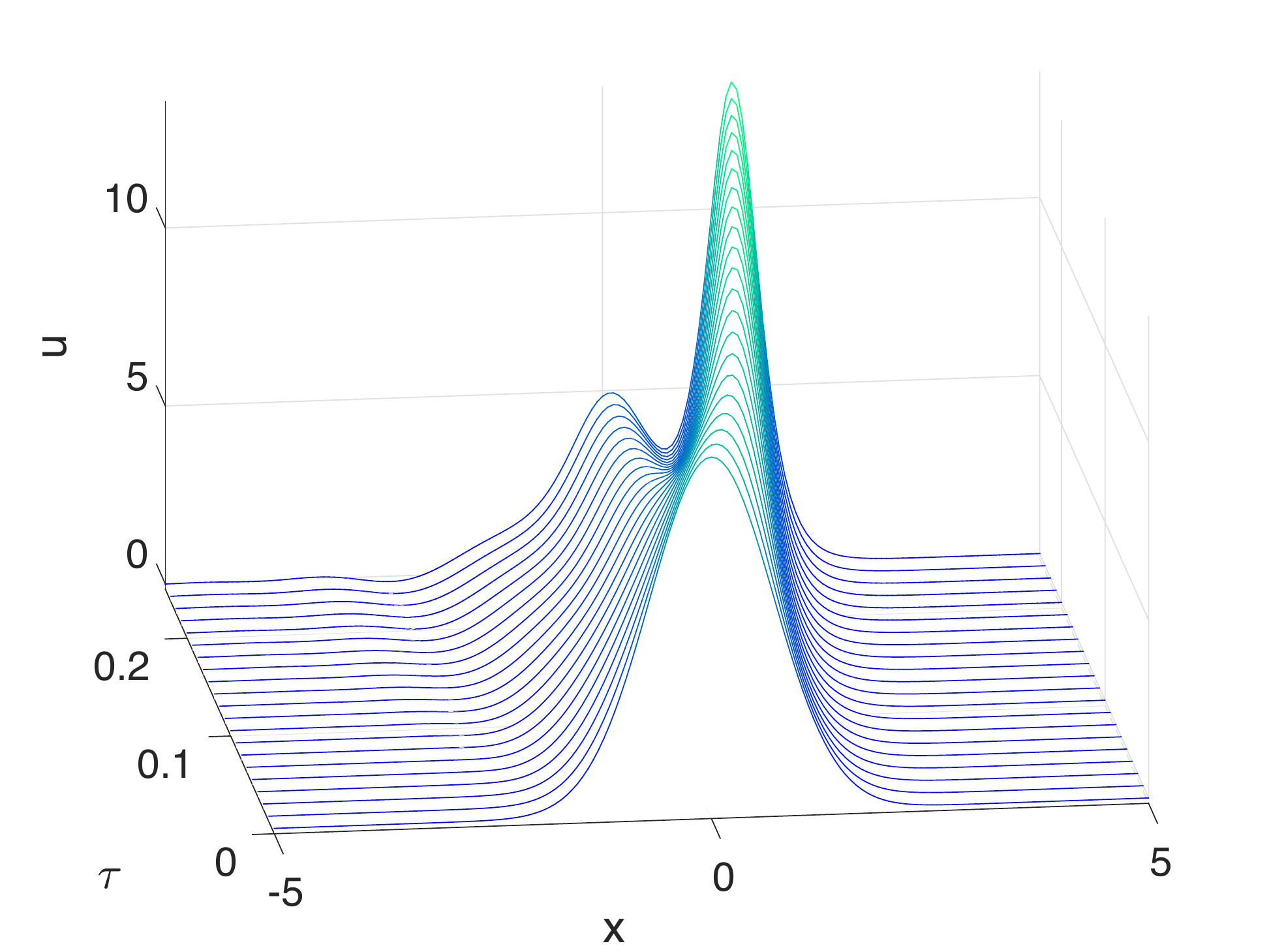}
 \caption{Solution to the KdV equation $u_{\tau}+uu_{x}+u_{xxx}/6$
 for 
 initial data  $u(x,0)=10\exp(-x^{2})$ in dependence of 
 time.}
 \label{figwhitham10gausskdv}
\end{figure}

\subsection{Numerical study of the Whitham equation with surface 
tension}
In this subsection we will study the Whitham equation with surface 
tension (\ref{WhitST}) for similar situations as above for the 
Whitham equation without surface tension: solitons, their stability 
and general initial data in the Schwartz class. The goal is mainly to 
highlight differences with respect to the case without surface 
tension.

Arnesen \cite{Ar} showed that solitons exist in the case of 
non-vanishing surface tension for all values of $\epsilon$. For small 
$\epsilon$, a similar calculation as for the Whitham equation in 
(\ref{wtravel4}) leads to the result that the Whitham soliton should 
be in some sense close to the KdV soliton given by
\begin{equation}
    U=3\delta
    \mbox{sech}^{2}\left(\sqrt{\frac{3\delta}{2(1-3\beta}}(x-ct)\right).
    \label{wtravel4b}
\end{equation}
This means that for $\beta<1/3$, the solitons should have positive 
amplitude, whereas they are depression waves (negative amplitude) for 
$\beta>1/3$. Numerically we construct the solutions as before. Since 
the behavior for the high wave numbers of the operator 
$\hat{\mathcal{L}}_{\epsilon}$ (\ref{dispST}) is different (linear in 
$|k|$) than for 
the case $\beta=0$ (constant in $|k|$), it is useful to divide by 
$\hat{\mathcal{L}}_{\epsilon}$ in the equation for the solitary wave 
in Fourier space. This is straight forward because the 
operator is diagonal in Fourier space. For small $\epsilon$ and 
$\delta$ the solitons are close to the KdV  soliton. All figures are 
created with $N=2^{12}$ Fourier modes. 

We first address the case $\beta=0.1$ and $\delta=2$ which can be seen 
on the left of Fig.~\ref{figwhithamsole01}. The solitary wave for 
the Whitham equation with surface tension is slightly more peaked 
than the KdV soliton in red in this case. If we propagate the exact 
solitary wave in a commoving frame by the time evolution code (as in \cite{KP2} it is 
useful to apply a simplified Newton iteration instead of a fixed 
point iteration), the difference between numerical solution and 
initial data for $N_{t}=10^{4} $ time steps for 
$\tau<1$ is of the order of the numerical error for the solitary wave 
($10^{-12}$). 

If we perturb this solution 
slightly by considering the initial data $u(x,0)=0.99 U(x,c=1.02)$, 
i.e., initial data close to the solitary wave with a slightly smaller 
mass, one can see on the right of Fig.~\ref{figwhithamsole01} that the 
soliton is dispersed to infinity. In contrast to the case without 
surface tension, there are no visible oscillations, the initial pulse 
just gets broader and decreases in height. 
\begin{figure}[htb!]
  \includegraphics[width=0.49\textwidth]{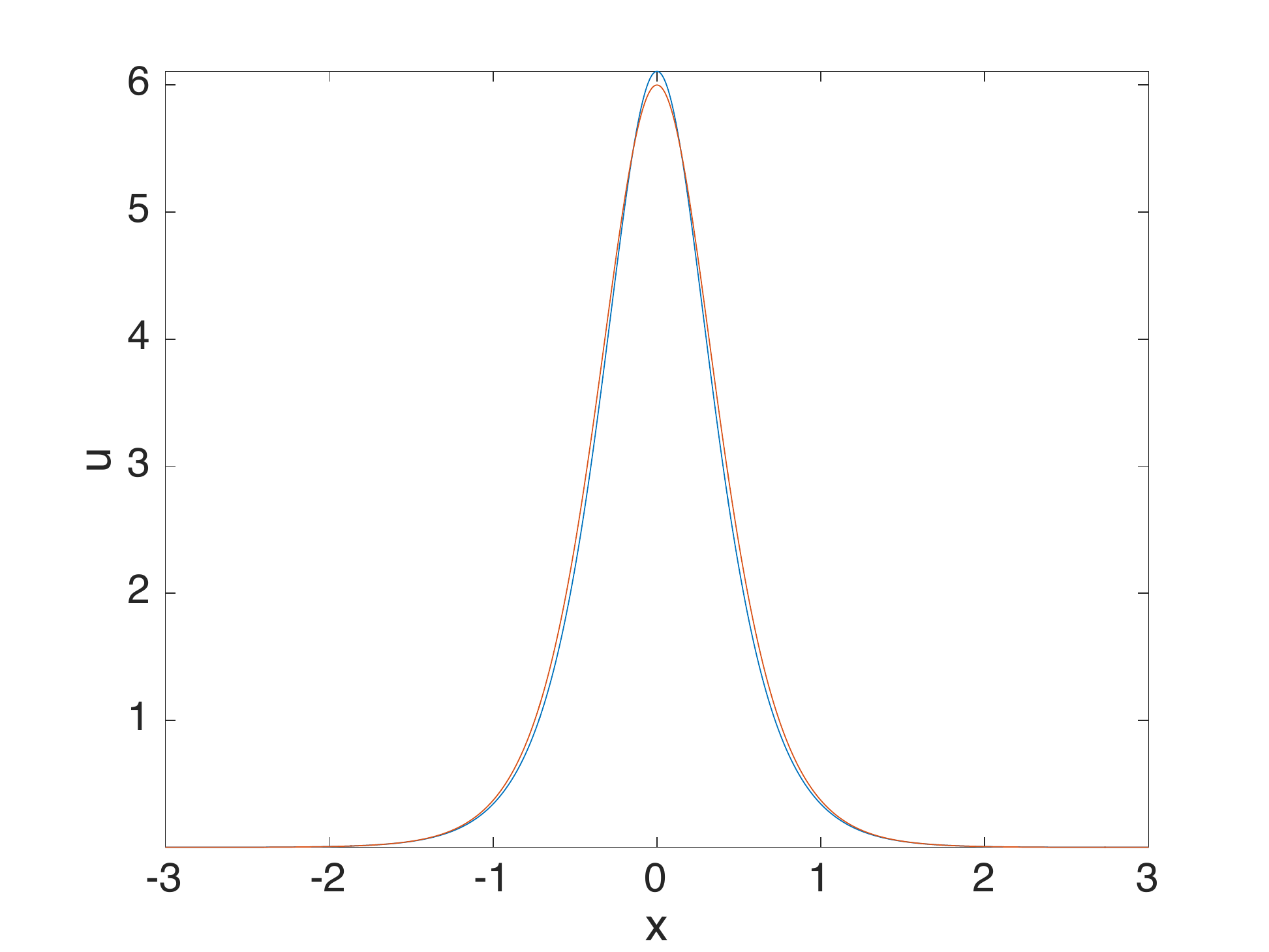}
  \includegraphics[width=0.49\textwidth]{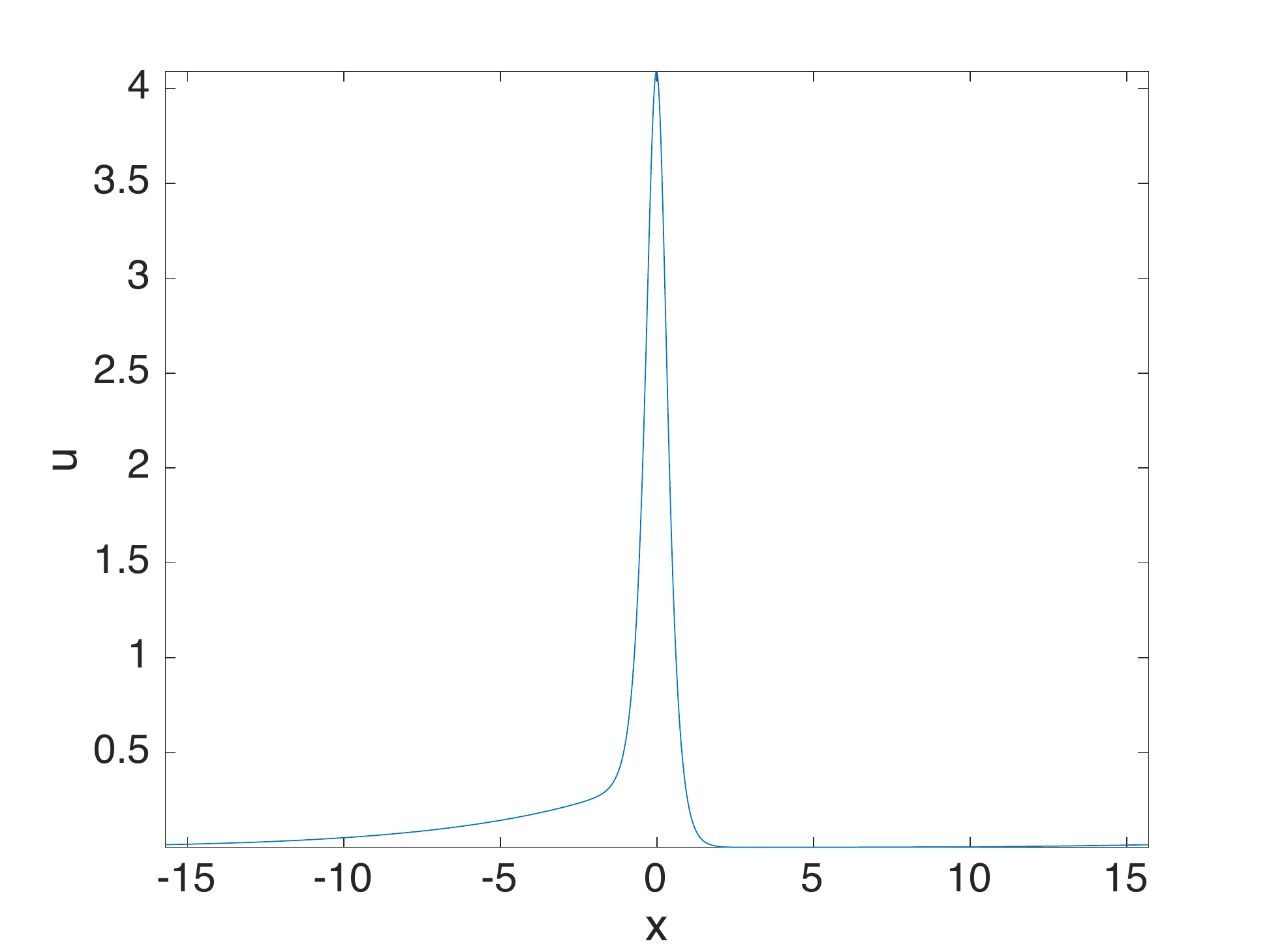}
 \caption{Solitary wave to the Whitham equation with $\epsilon=0.1$ 
 with surface tension $\beta=0.1$ for $c=1.02$ and the corresponding 
 KdV soliton (\ref{wtravel4b}) in red on the left; on the right the 
 solution to the Whitham equation with $\epsilon=0.1$ and surface 
 tension $\beta=0.1$ for the initial data $u(x,0)=0.99 U(x,c=1.02)$ 
 at $\tau=5$.}
 \label{figwhithamsole01}
\end{figure}

If we consider for the same values of $\epsilon$ and $\beta$ as in 
Fig.~\ref{figwhithamsole01} the initial data $u(x,0)=1.01U(x,c=1.02)$, 
i.e., initial data in the vicinity of the solitary wave with slightly 
larger mass, one can see in Fig.~\ref{figwhithamsole01101} that the 
solution has an $L^{\infty}$ blow-up in finite time which is clear 
from the right figure. In fact the divergence of the $L^{\infty}$ 
norm after $\tau=5.96$, for which the solution is shown on the left, is 
so rapid that the fitted distance (\ref{fit}) changes from positive 
to negative values too rapidly to obtain a sensible result for $\mu$ 
(the Fourier coefficients deteriorate too rapidly). Thus one finds the 
same behavior for almost solitary initial data as for the $L^{2}$ 
critical gKdV equation \cite{MMR}: exact solitary initial data are not affected, 
initial data with smaller mass will be dispersed to infinity, and 
initial data with larger mass lead to an $L^{\infty}$ blow-up. 
\begin{figure}[htb!]
  \includegraphics[width=0.49\textwidth]{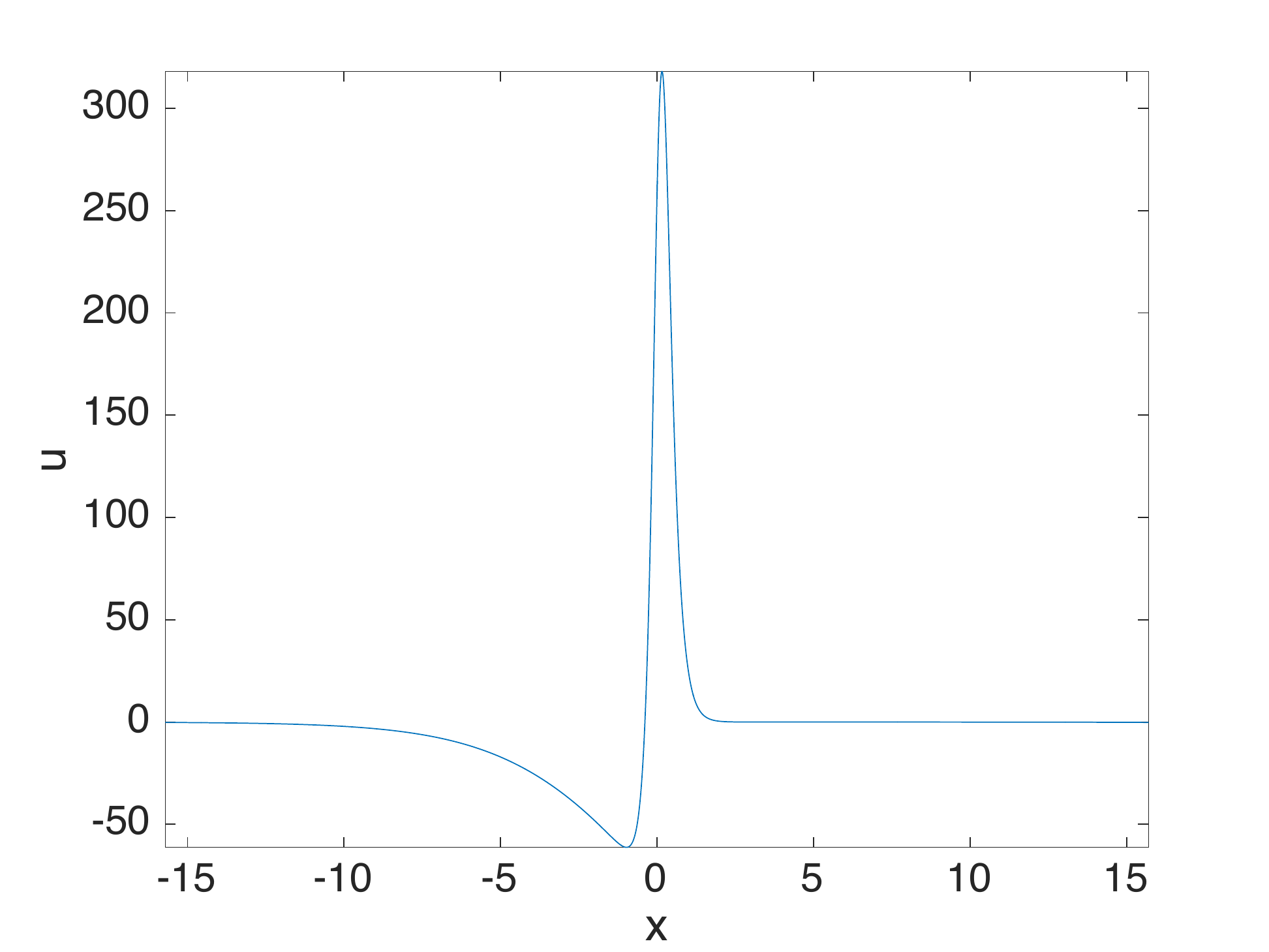}
  \includegraphics[width=0.49\textwidth]{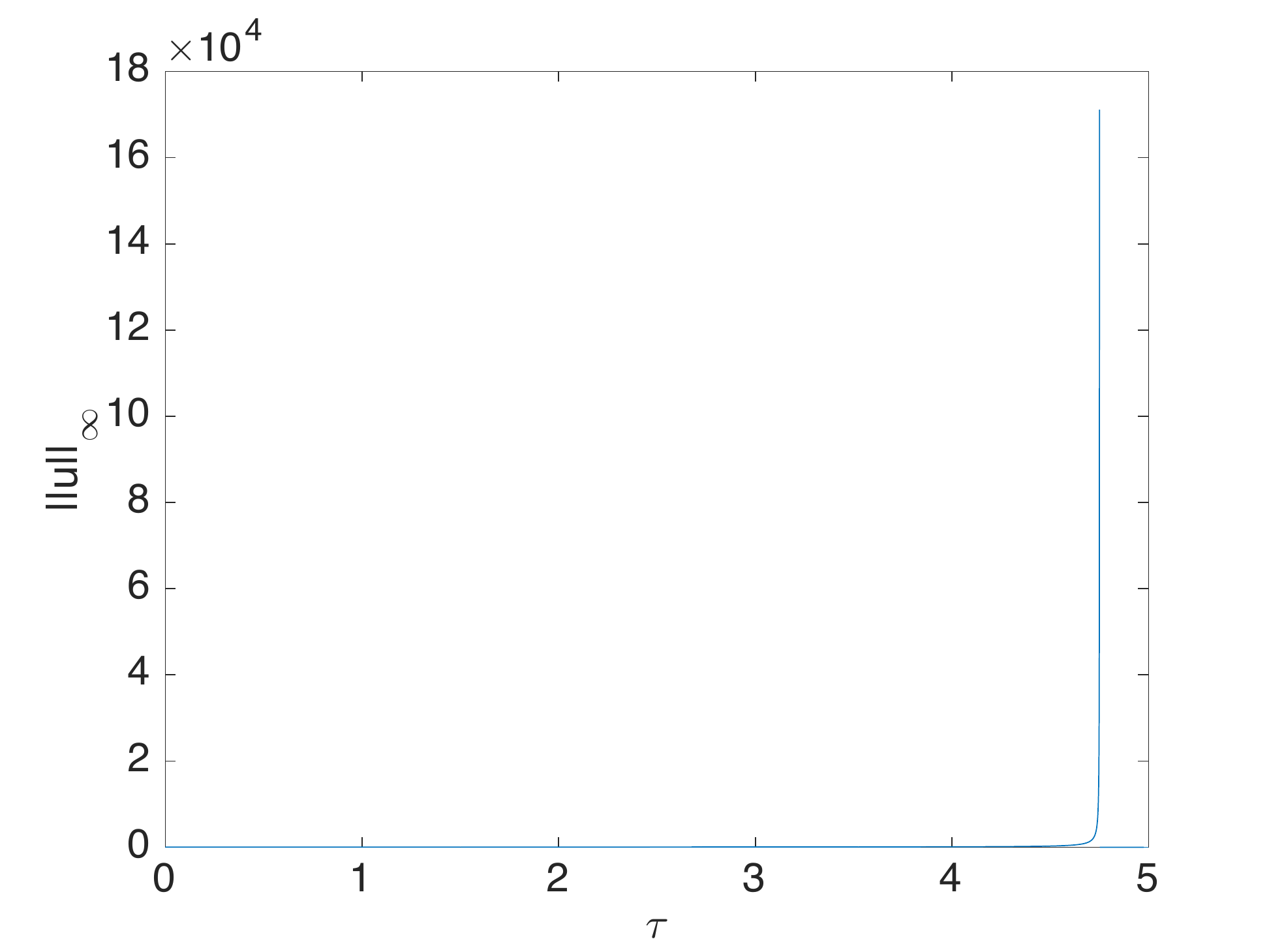}
 \caption{Solution to the Whitham equation with $\epsilon=0.1$ and surface 
 tension $\beta=0.1$ for the initial data $u(x,0)=1.01 U(x,c=1.02)$ 
 at $\tau=5.96$ on the left the $L^{\infty}$ norm of the solution in 
 dependence of time on the right.}
 \label{figwhithamsole01101}
\end{figure}

For $\beta=1$ and $\delta=-20$ the soliton has negative amplitude and 
 can be seen 
on the left of Fig.~\ref{figwhithamsole01b1}. The solitary wave for 
the Whitham equation with surface tension is again slightly more peaked 
than the KdV soliton in red. 
If we perturb this solution 
 by considering the initial data $u(x,0)=0.99 U(x,c=0.8)$, 
one can see on the right of Fig.~\ref{figwhithamsole01b1} that the 
soliton is once more dispersed to infinity, this time to the right, 
but again without visible oscillations. 
\begin{figure}[htb!]
  \includegraphics[width=0.49\textwidth]{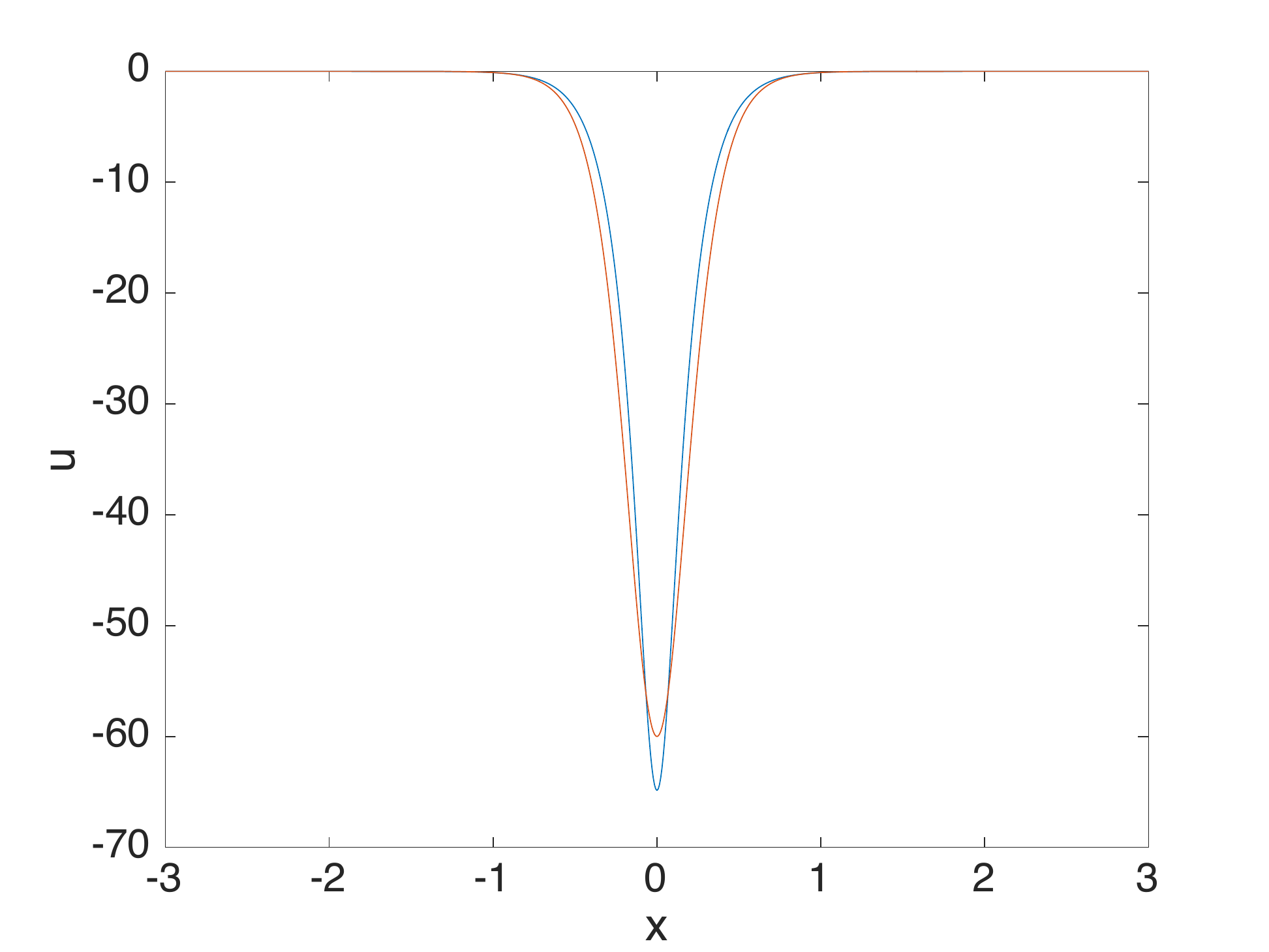}
  \includegraphics[width=0.49\textwidth]{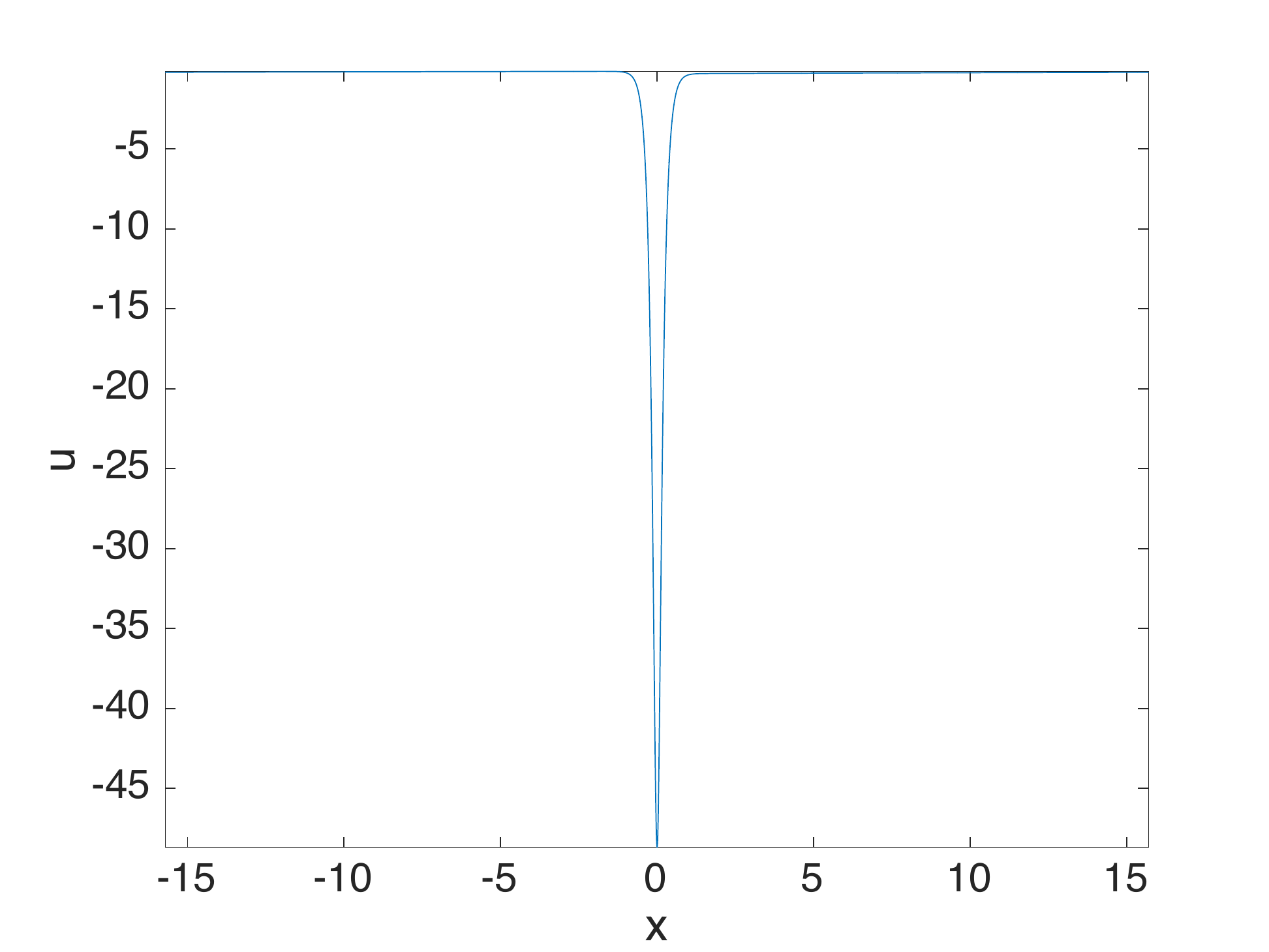}
 \caption{Solitary wave to the Whitham equation with $\epsilon=0.1$ 
 with surface tension $\beta=1$ for $c=0.8$ and the corresponding 
 KdV soliton (\ref{wtravel4b}) in red on the left; on the right the 
 solution to the Whitham equation with $\epsilon=0.1$ and surface 
 tension $\beta=1$ for the initial data $u(x,0)=0.99 U(x,c=0.8)$ at 
 $\tau=3.6$.}
 \label{figwhithamsole01b1}
\end{figure}

If we consider for the same values of $\epsilon$ and $\beta$ as in 
Fig.~\ref{figwhithamsole01b1} the initial data $u(x,0)=1.01U(x,c=0.8)$, 
one can see in Fig.~\ref{figwhithamsole01101b1} that the 
solution has again an $L^{\infty}$ blow-up in finite time which is clear 
from the right figure. The divergence of the $L^{\infty}$ 
norm after $\tau=4.75$, for which the solution is shown on the left, is 
so rapid that the iteration stops converging. Once more one finds the 
same behavior for almost solitary initial data as for the $L^{2}$ 
critical gKdV equation. 
\begin{figure}[htb!]
  \includegraphics[width=0.49\textwidth]{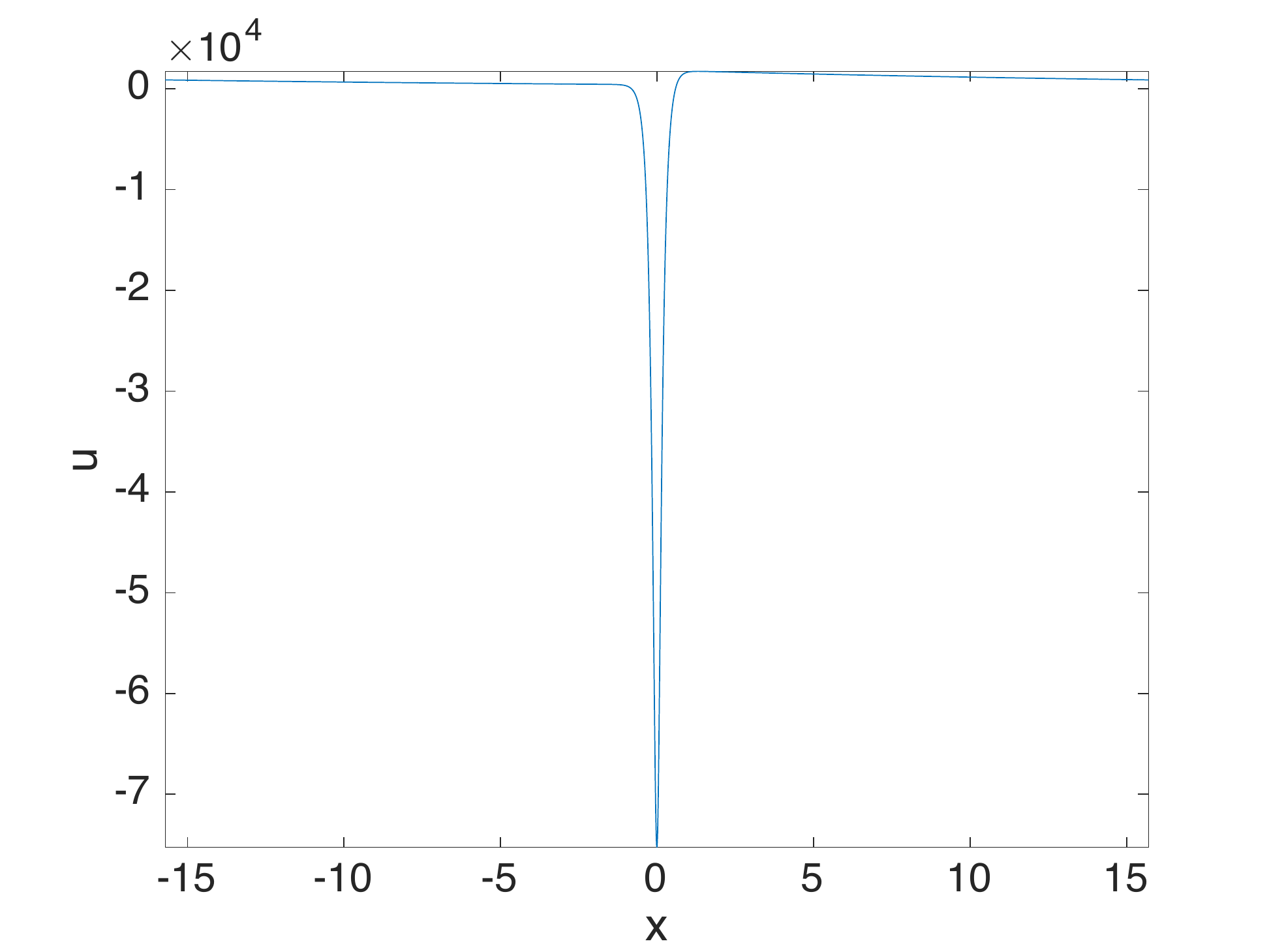}
  \includegraphics[width=0.49\textwidth]{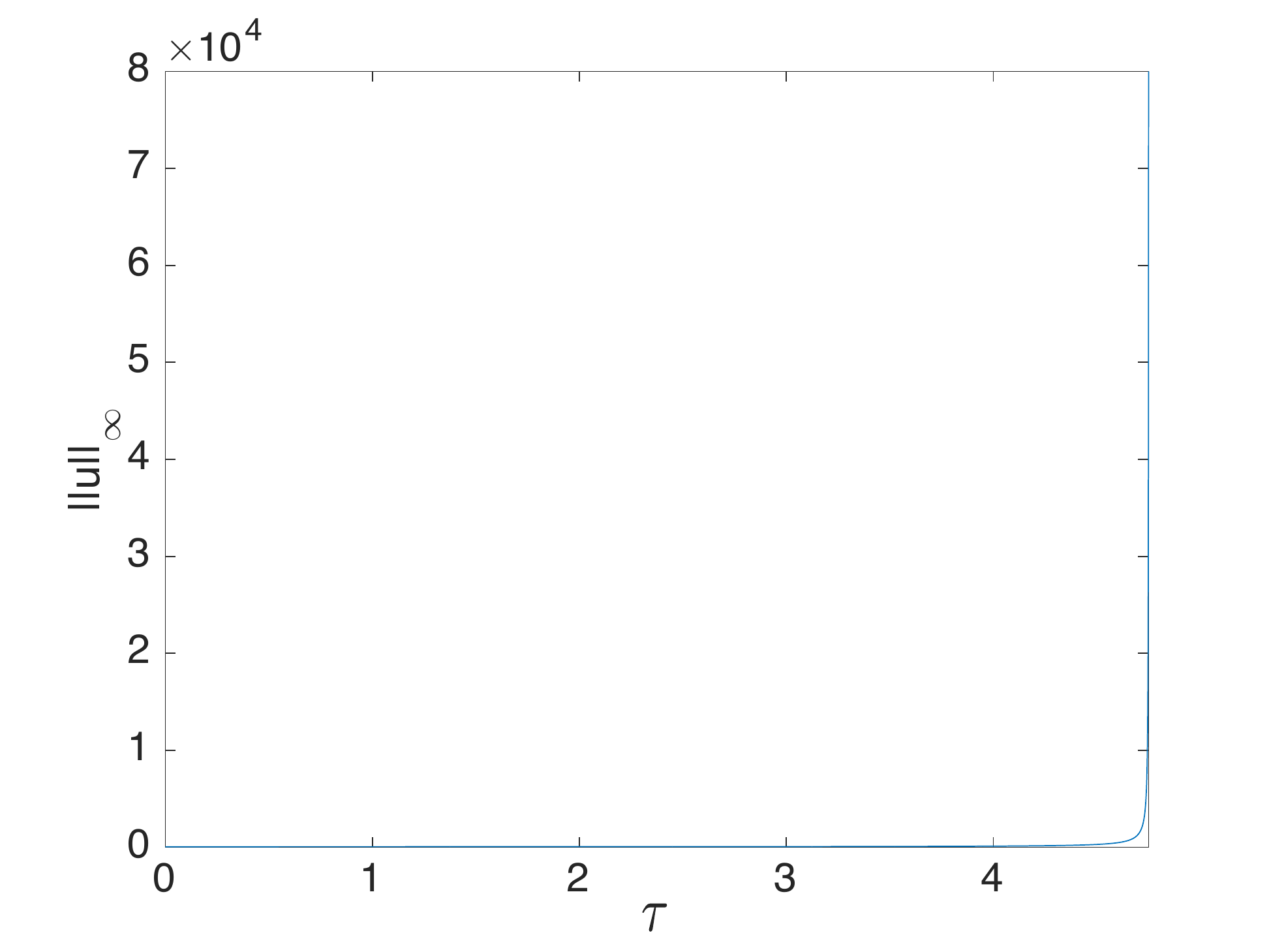}
 \caption{Solution to the Whitham equation with $\epsilon=0.1$ and surface 
 tension $\beta=1$ for the initial data $u(x,0)=1.01 U(x,c=0.8)$ 
 at $\tau=4.75$ on the left and the $L^{\infty}$ norm of the solution in 
 dependence of time on the right.}
 \label{figwhithamsole01101b1}
\end{figure}

To address the case of general initial data in the Schwartz class, we 
consider the same initial data as in the
case $\beta=0$ in Fig.~\ref{figwhitgausse1}. The 
numerical results indicate that there is a hyperbolic blow-up in this 
case.
In Fig.~\ref{figwhithamb110gauss} we show in contrast the corresponding 
situation for $\beta=1$. We use $N=2^{14}$ Fourier modes on 
$x\in[-2\pi,2\pi]$ and $N_{t}=50000$ time steps for $\tau\in[0,0.2]$. 
For small times, the dynamics of 
Burgers' equation is again dominant leading to a steepening of the 
right front towards the formation of a shock. Close to a potential 
gradient catastrophe, the stronger dispersion (compared to the 
Whitham equation with $\beta=0$) takes over and appears to generate a 
dispersive shock as known from the KdV equation. However, as becomes 
clear from the right figure in Fig.~\ref{figwhithamb110gauss}, 
instead of a dispersive shock wave an $L^{\infty}$ blow-up as for 
generalized KdV equations is observed, see e.g. \cite{MM,KP2} and 
references therein.
\begin{figure}[htb!]
  \includegraphics[width=0.49\textwidth]{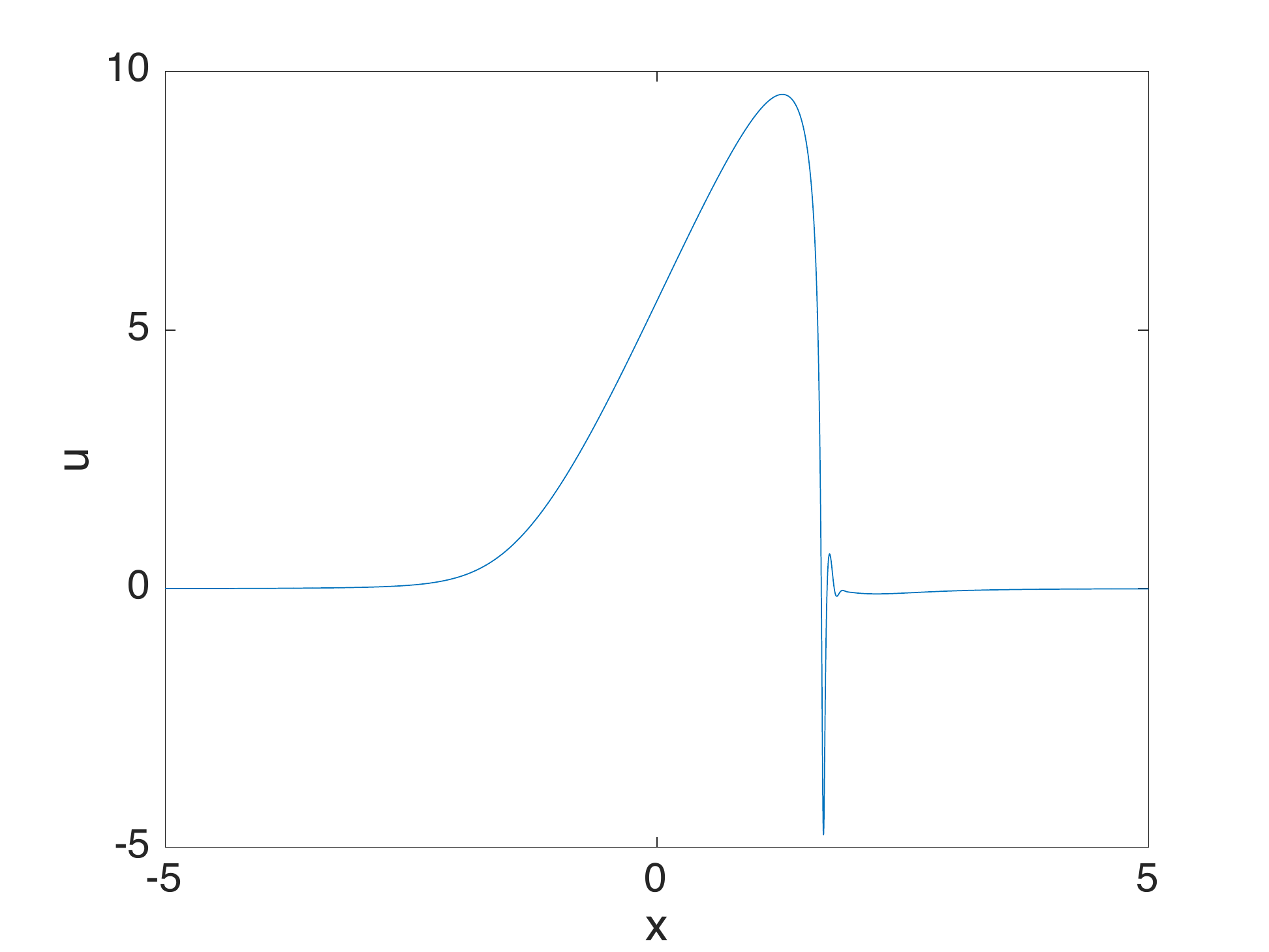}
  \includegraphics[width=0.49\textwidth]{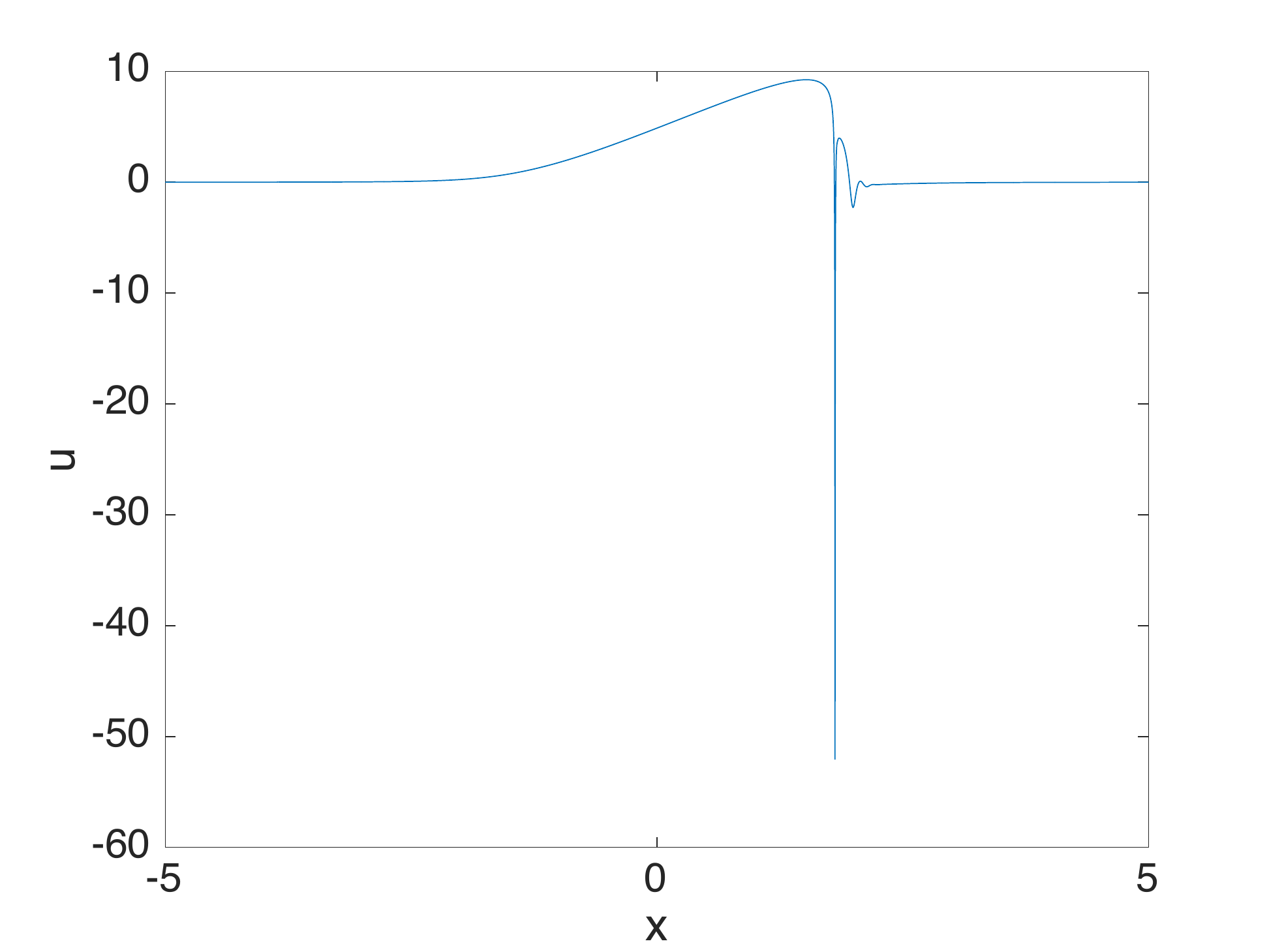}
 \caption{Solution to the Whitham equation with surfaces tension for 
 $\epsilon=1$, $\beta=1$ and the initial data 
 $u(x,0)=10\exp(-x^{2})$; on the left for $\tau=0.13$, on the right for 
 the last recorded time $\tau=0.1648$.}
 \label{figwhithamb110gauss}
\end{figure}

A fitting of the Fourier coefficients in 
Fig.~\ref{figwhithamb110gaussfourier} according to (\ref{fit}) 
indicates indeed that a singularity in the complex plane approaches 
the real axis for $\tau\approx 0.1648$. The critical exponent is found 
to be $\mu=-0.5518$ which confirms an $L^{\infty}$ blow-up. This is in 
accordance with the $L^{\infty}$ norm of the solution shown in 
Fig.~\ref{figwhithamb110gaussfourier} on the right. 
\begin{figure}[htb!]
  \includegraphics[width=0.49\textwidth]{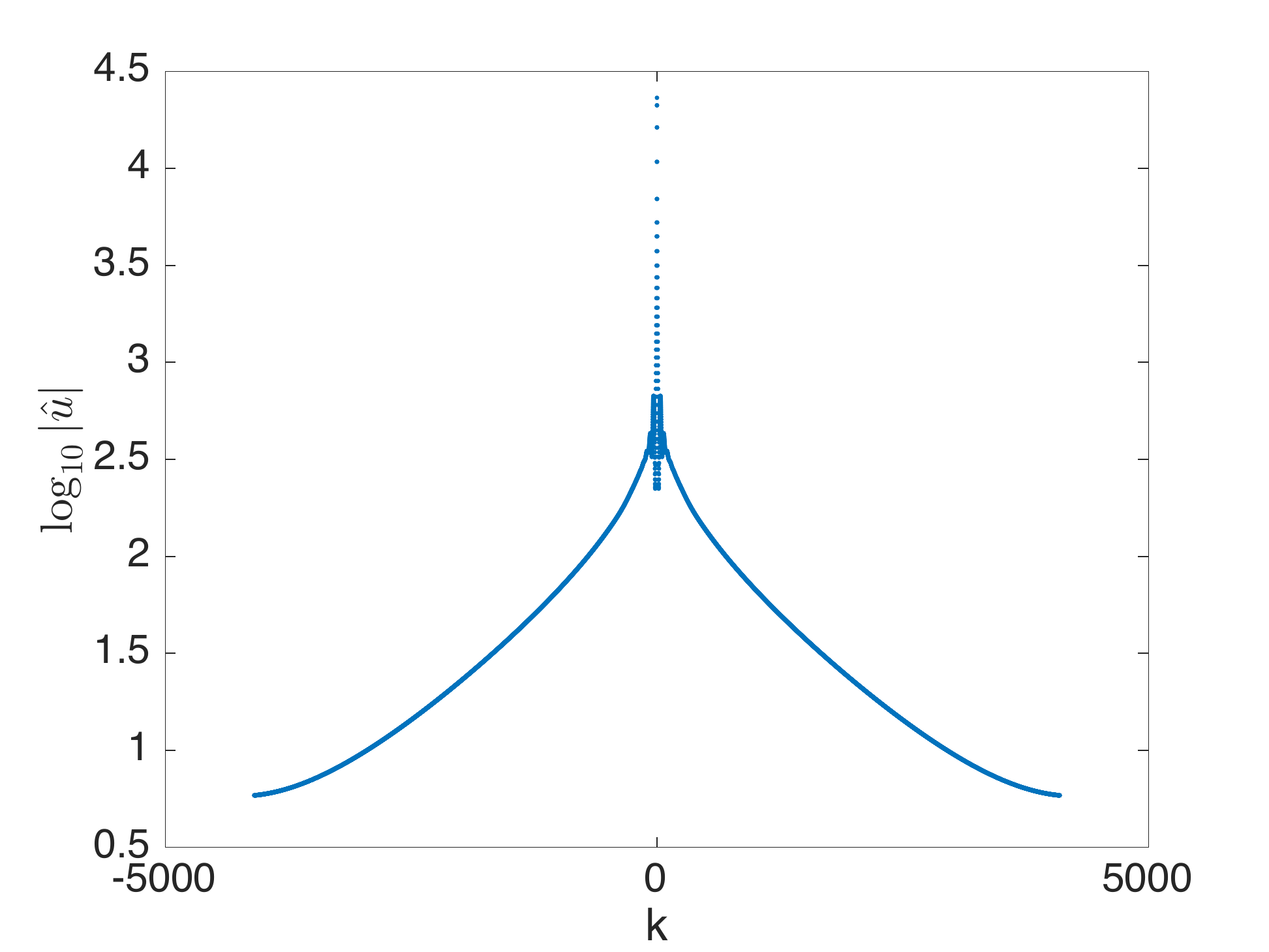}
  \includegraphics[width=0.49\textwidth]{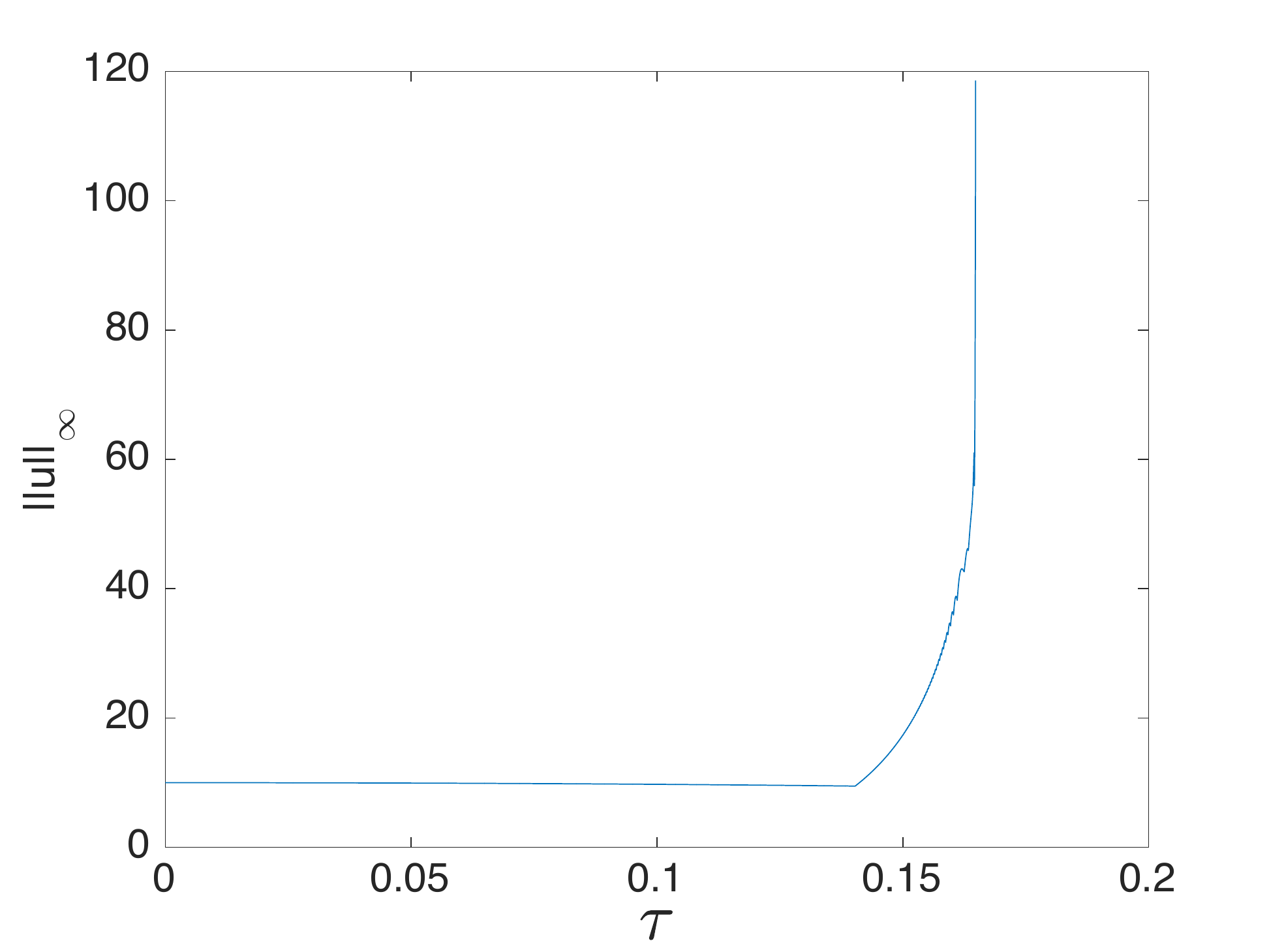}
 \caption{Solution to the Whitham equation with surface tension for 
 $\epsilon=1$, $\beta=1$ and the initial data 
 $u(x,0)=10\exp(-x^{2})$; on the left the modulus of the Fourier 
 coefficients of the solution $t=0.164$, on the right the 
 $L^{\infty}$ norm of the solution in dependence of time.}
 \label{figwhithamb110gaussfourier}
\end{figure}

\section{Numerical simulations for the Boussinesq system}
In this section we numerically construct solitary waves for the 
Boussinesq system and study their stability as well as solutions to 
more general initial 
data.

\subsection{Numerical construction of solitary waves for the 
Boussinesq system}

As in the case of solitary waves for the Whitham equation, we 
construct the solitary waves by solving equation (\ref{travel}) 
with FFT techniques in Fourier space with a Newton-GMRES iteration. 
We choose again $\epsilon=0.01$ and use the KdV soliton (\ref{sol}) as initial 
iterate for small $\alpha$. For larger values of $\alpha$ (which 
implies larger values of the speed $c=1+\alpha \epsilon$), we use the numerical solution to 
(\ref{travel}) for a slightly smaller $\alpha$ as an initial 
iterate. The iterations are carried out on the interval 
$x\in[-5\pi,5\pi]$ with $N=2^{14}$ Fourier modes. For $c=1.16$ we use 
$2^{16}$ Fourier modes, but even  increasing $c$ in small 
increments, we do not observe convergence of the iteration. Again this 
does not prove the non-existence of solitary waves to the system 
(\ref{sys}) at higher speeds, but is an indication that there might 
be an upper limit to the speed of the travelling waves for this 
equation. 

In Fig.~\ref{figsautsyssol}, we show on the left the function $U$ for 
different values of $c$. It can be seen that with increasing speed, 
the solitons become again more peaked and localized. Note that all 
shown solutions are numerically well resolved in the sense that the 
modulus of the Fourier coefficients decreases to machine precision 
for large wave numbers. On the right of Fig.~\ref{figsautsyssol} we show 
the corresponding functions $N$ computed for given $U$ via 
(\ref{travelsys}). Due to the term proportional to $\epsilon$ in 
(\ref{travelsys}), the functions $N$ are less peaked than the 
corresponding function $U$ for large $c$. 
\begin{figure}[htb!]
  \includegraphics[width=0.49\textwidth]{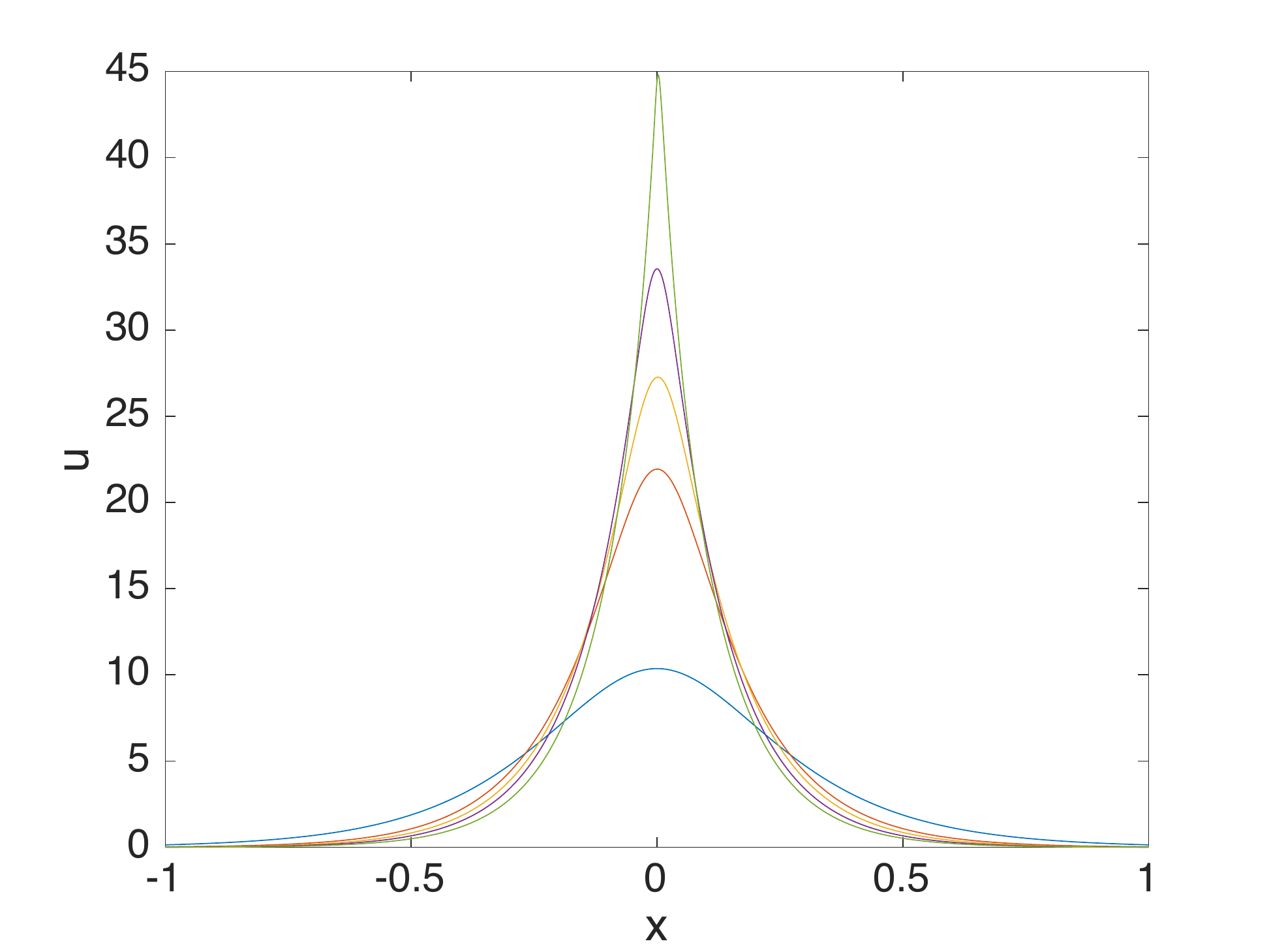}
  \includegraphics[width=0.49\textwidth]{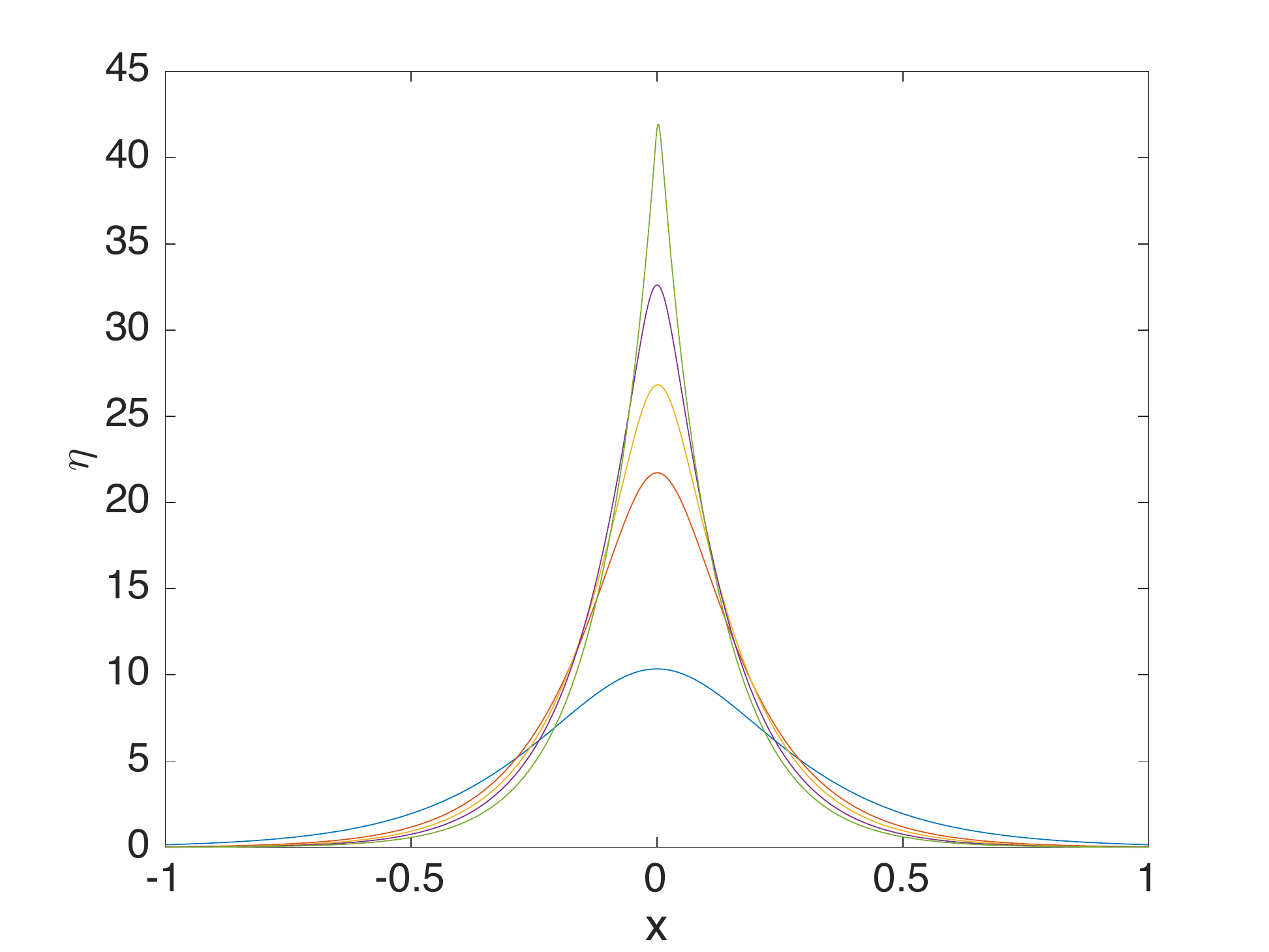}
 \caption{Solitary wave solution to the system (\ref{sys}) for 
 $c=1.05,1.1,1.12,1.14,1.16$ (in order of increasing maxima); on the 
 left the functions $U$, 
 on the right the functions $N$.}
 \label{figsautsyssol}
\end{figure}

\subsection{Numerical study of perturbed solitary waves}
In this section we study numerically solutions to the Cauchy problem for the 
Boussinesq system (\ref{sys}) for several examples. We use the same 
approach as for the Whitham equation, a Fourier spectral method in 
$x$ and an implicit Runge-Kutta method of fourth order with a fix 
point iteration in $t$. Since the system can be seen as a dispersive 
regularization of the ill-posed Kaup system and since it is ill posed 
for negative $\eta$, Krasny filtering \cite{krasny} has to be used 
in order to stabilize the code as in the case of for instance the 
focusing nonlinear Schr\"odinger equation in the semiclassical limit, 
see e.g. \cite{etna,DGK}; this means the Fourier coefficients with a 
modulus smaller than some threshold, typically $10^{-12}$, are put 
equal to zero. The accuracy of the solution is controlled 
via the decrease of the modulus of the Fourier coefficients for large 
wave numbers and the conservation of the numerically computed energy.

If we choose the numerically constructed soliton solution as 
initial data in a commoving frame, the solution is for $c=1.05$ 
numerically evolved for $t\in[0,2]$ with an accuracy of $10^{-14}$ 
($L^{\infty}$ norm of the difference between numerical solution and 
initial data). This shows the accuracy both of the time evolution 
code and the code for the solitons. Note that a rescaling of the time 
as in (\ref{whithamtau}) is not straight forward for the Boussinesq 
system. Therefore we will always consider the time $t$ in this 
section.  

Using initial 
data of the form $u(x,0)=U(x)+\exp(-x^{2})$, $\eta(x,0)=N(x)$, one 
gets the solution shown in Fig.~\ref{figsautsyssolgauss}. It can be 
seen that a slightly larger soliton emerges, and that the remaining 
energy is radiated to the left. 
\begin{figure}[htb!]
  \includegraphics[width=0.49\textwidth]{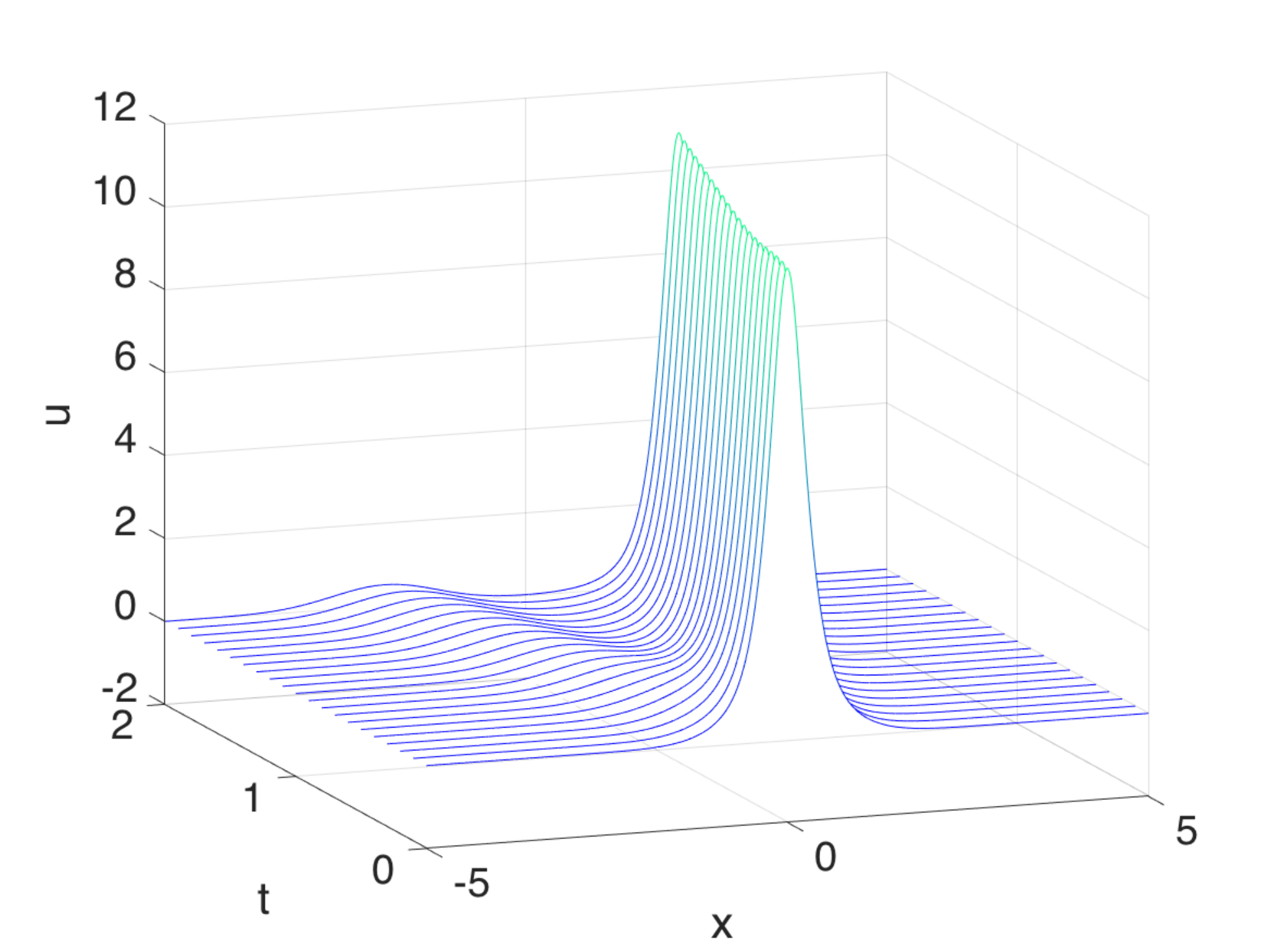}
  \includegraphics[width=0.49\textwidth]{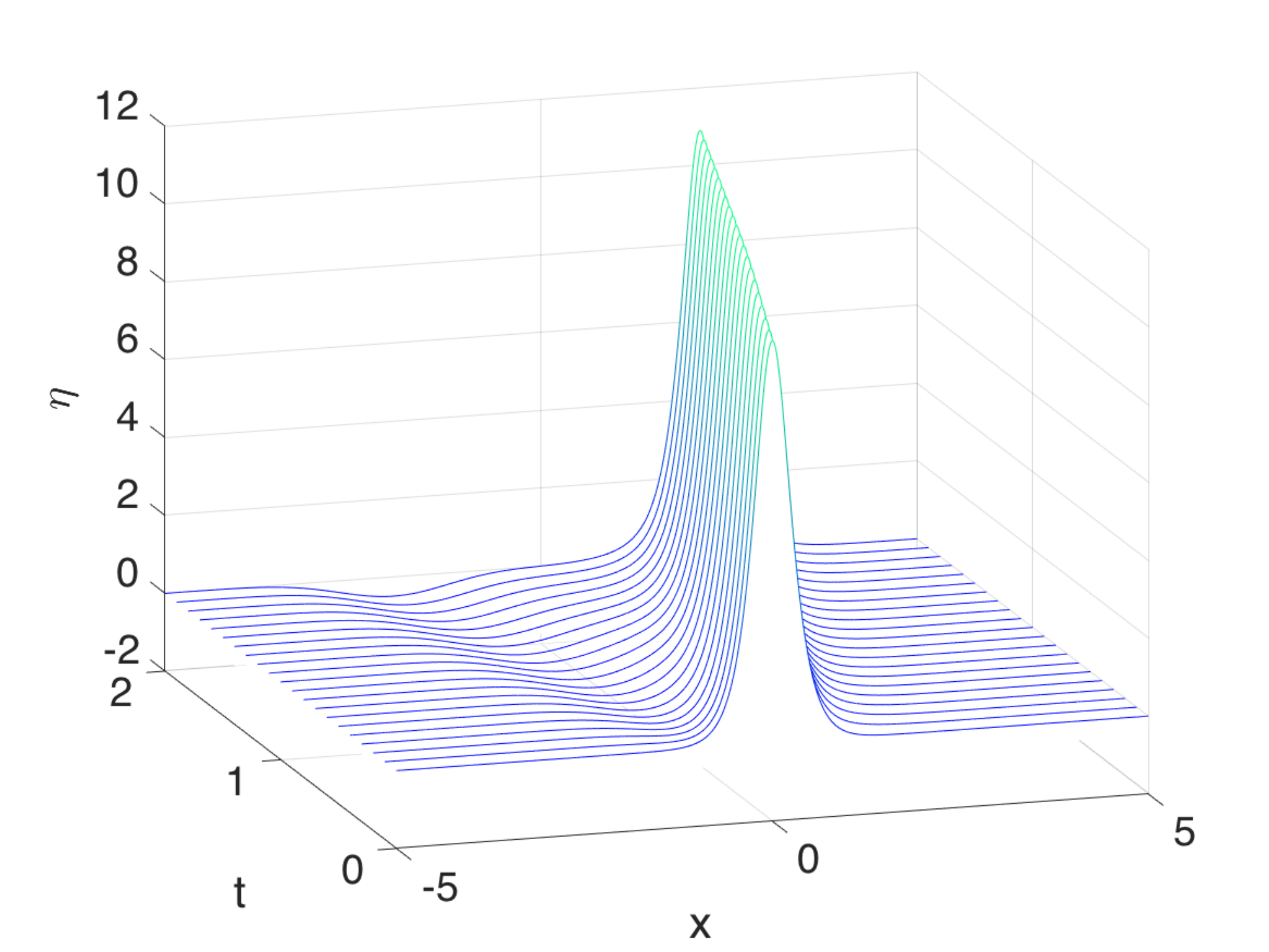}
 \caption{Perturbed soliton of the Boussinesq system (\ref{sys}) for 
 $\epsilon=0.1$, i.e., the solution for the initial 
data  $u(x,0)=U(x)+\exp(-x^{2})$, $\eta(x,0)=N(x)$ in dependence of 
time.}
 \label{figsautsyssolgauss}
\end{figure}

As in the case of the Whitham equation in 
Fig.~\ref{figsautwhitham2sol}, the rapid decrease of the solitons to 
the Boussinesq system allows to study numerically soliton 
interactions. We consider as initial data the solitons with $c=1.05$ 
and $c=1.1$, the latter being centered at $x=-3$. The solution to 
the Boussinesq system for these initial data can be seen in 
Fig.~\ref{figsautsys2sol}. Visibly both $u$ and $\eta$ 
show the behavior of the KdV two-soliton. Note that the solution is 
computed in a frame commoving with $c=1.05$. 
\begin{figure}[htb!]
  \includegraphics[width=0.49\textwidth]{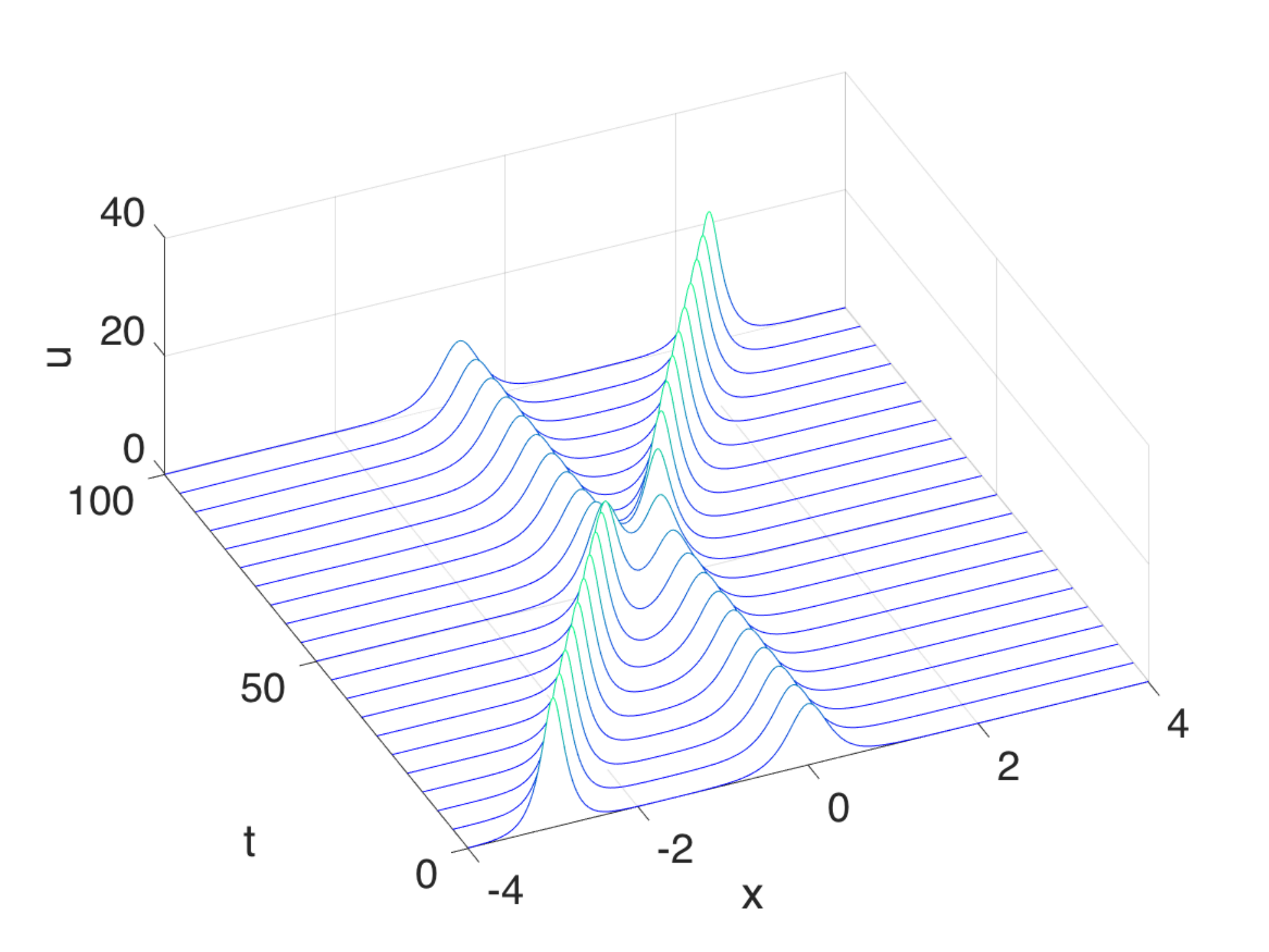}
  \includegraphics[width=0.49\textwidth]{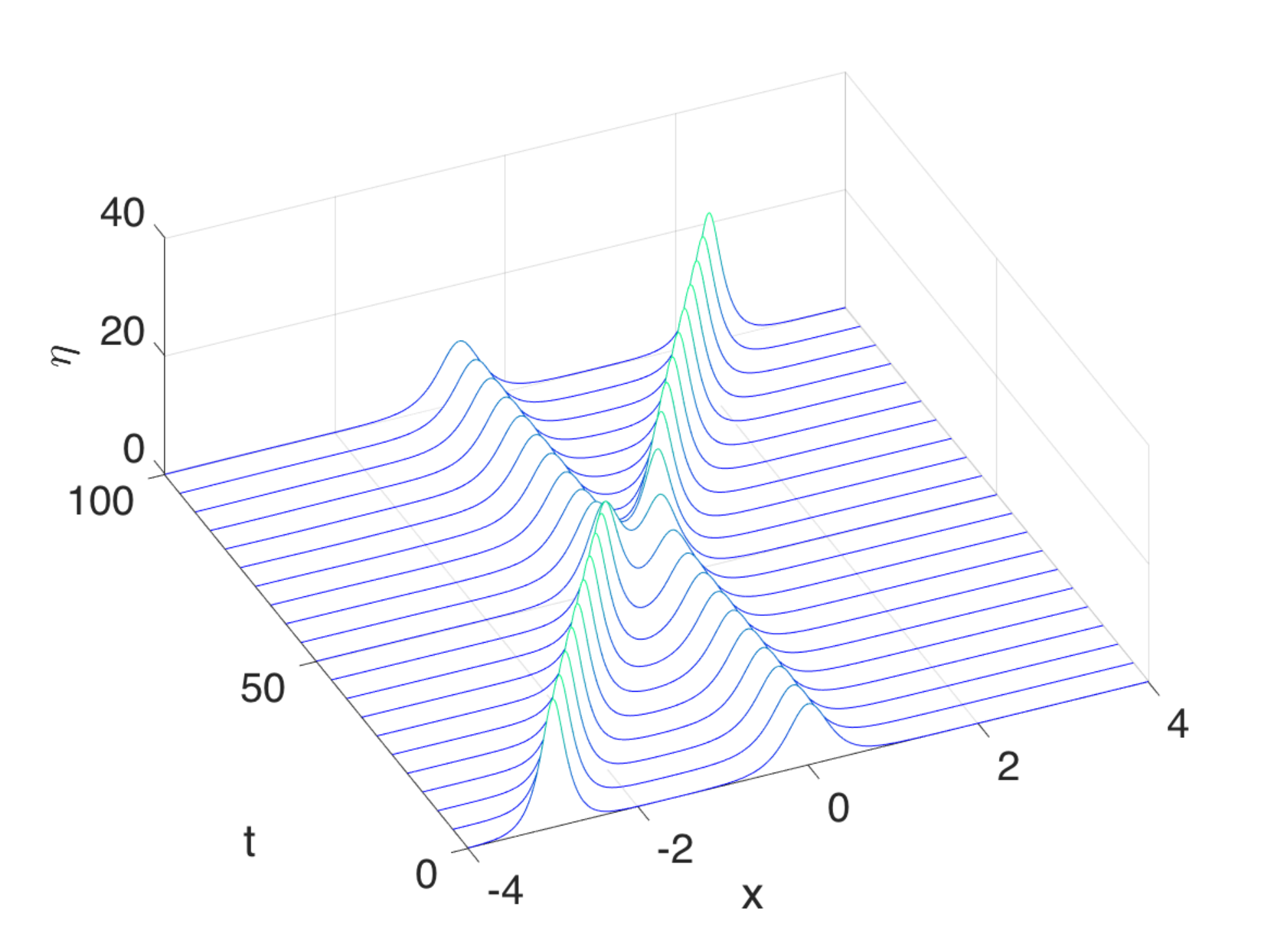}
 \caption{Solution of the Boussinesq system (\ref{sys}) for 
 $\epsilon=0.1$ and initial data being the sum of the solitons with 
 $c=1.05$ centered at $x=0$ and $c=1.1$ centered at $x=-3$ in 
 dependence of time.}
 \label{figsautsys2sol}
\end{figure}

However the solution at $t=100$ in Fig.~\ref{figsautsys2solt100} shows 
upon closer inspection, in particular the close-up on the right, that 
there is dispersive radiation propagating to the left. Thus despite a 
similarity to the KdV two-solitons, the Boussinesq systems does not 
appear to be completely integrable. 
\begin{figure}[htb!]
  \includegraphics[width=0.49\textwidth]{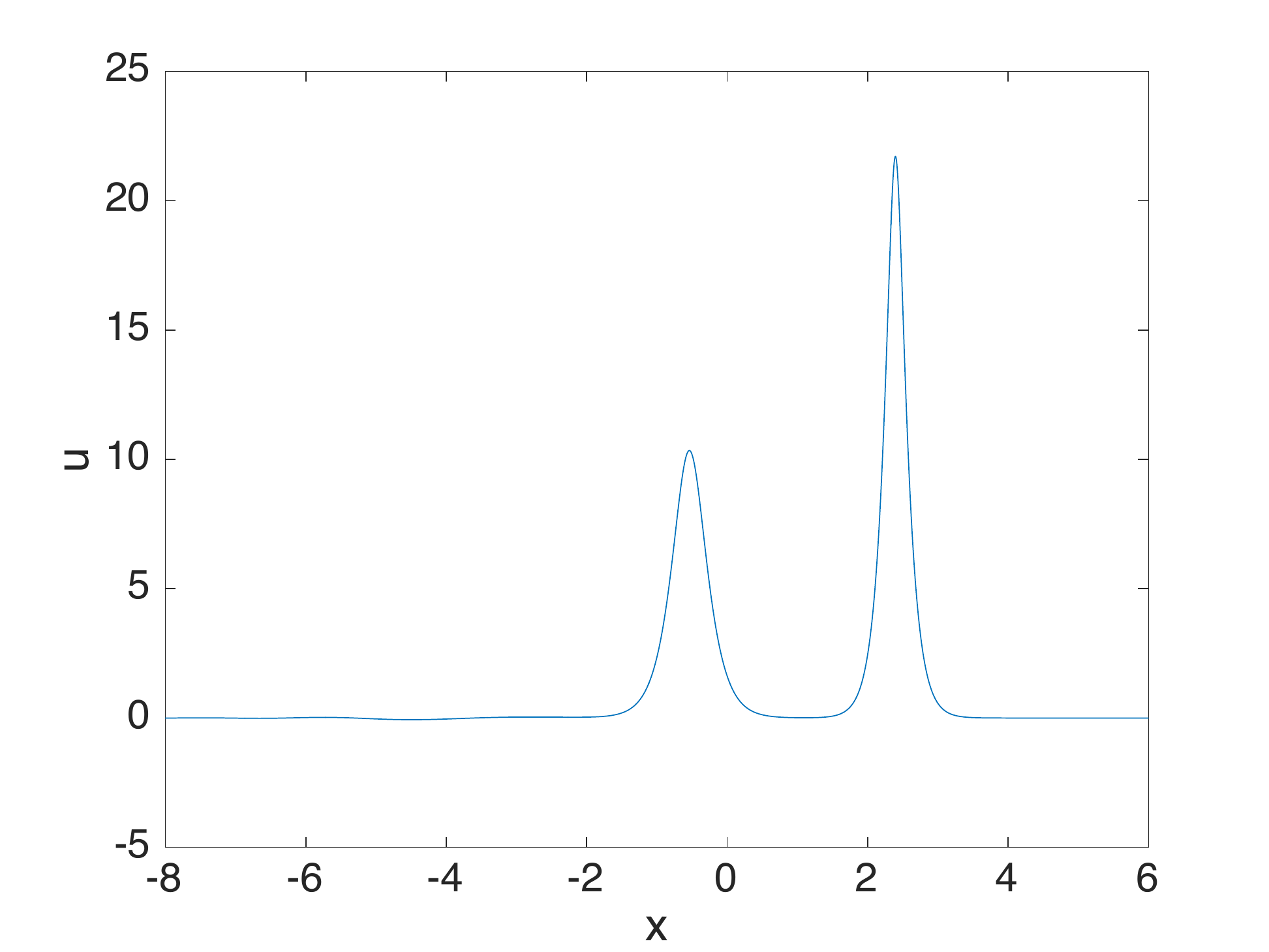}
  \includegraphics[width=0.49\textwidth]{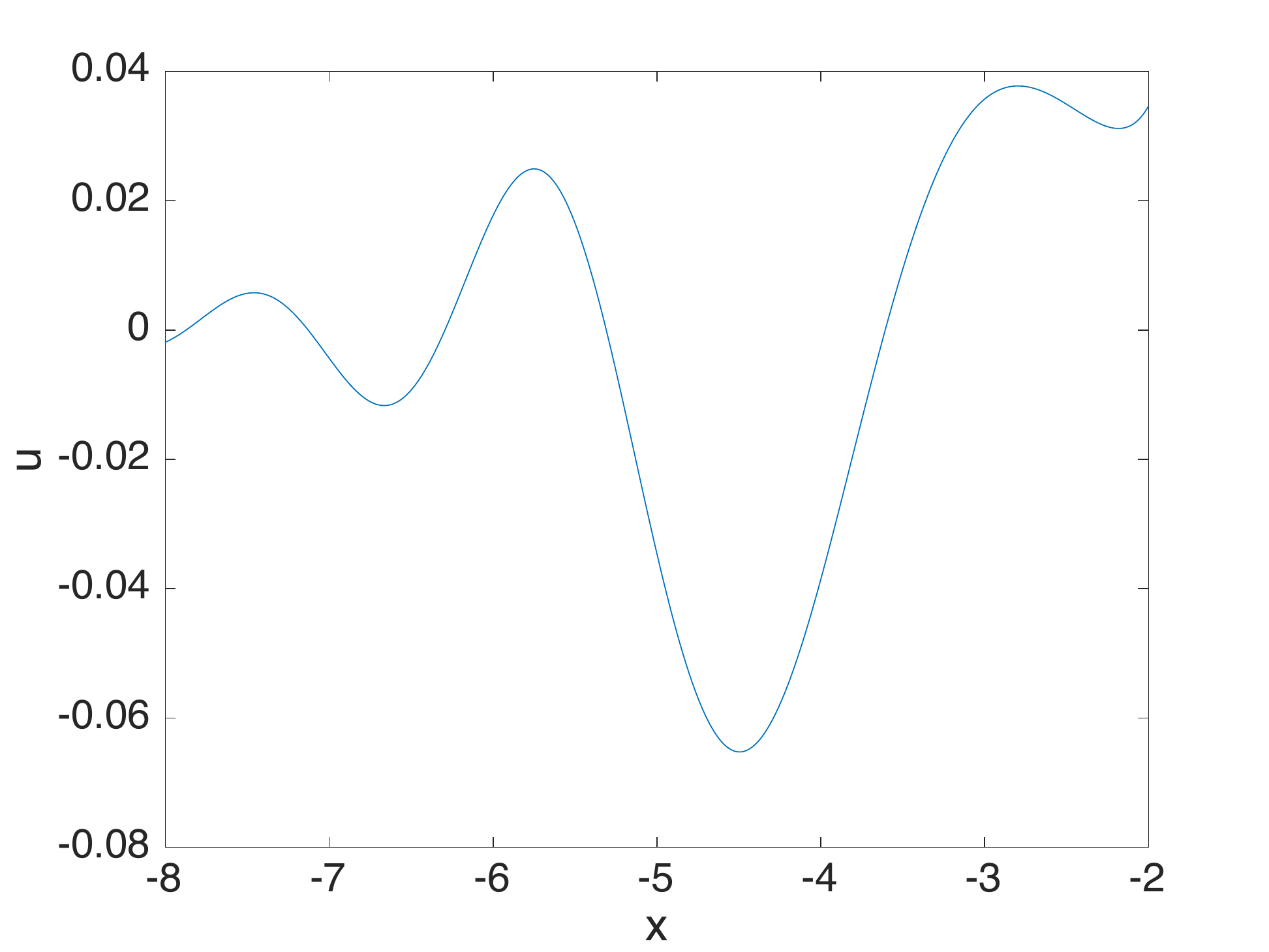}
 \caption{Solution $u$ of the Boussinesq system (\ref{sys}) for 
 $\epsilon=0.1$ and initial data being the sum of the solitons with 
 $c=1.05$ centered at $x=0$ and $c=1.1$ centered at $x=-3$ for 
 $t=100$; on the right a close-up of the situation on the left.}
 \label{figsautsys2solt100}
\end{figure}

\subsection{Numerical study of the Boussinesq system for more general 
initial data} 
In Fig.~\ref{figsautsysgauss} we show the solution to the Boussinesq 
system (\ref{sys} for the initial $u(x,0)=0$ and 
$\eta(x,0)=10\exp(-x^{2})$. The solution breaks at $t=0.4115$ 
since the fit of the Fourier coefficients to (\ref{fit}) indicates 
that a singularity in the complex plane hits the real axis. The 
fitted coefficient $\mu=0.345$ implies the formation of a cusp. The 
solution at the critical time is shown in Fig.~\ref{figsautsysgauss}. 
The behavior is similar to the cusp formation in solutions to the 
semiclassical NLS equation, see e.g.~\cite{DGK}. 
\begin{figure}[htb!]
  \includegraphics[width=0.49\textwidth]{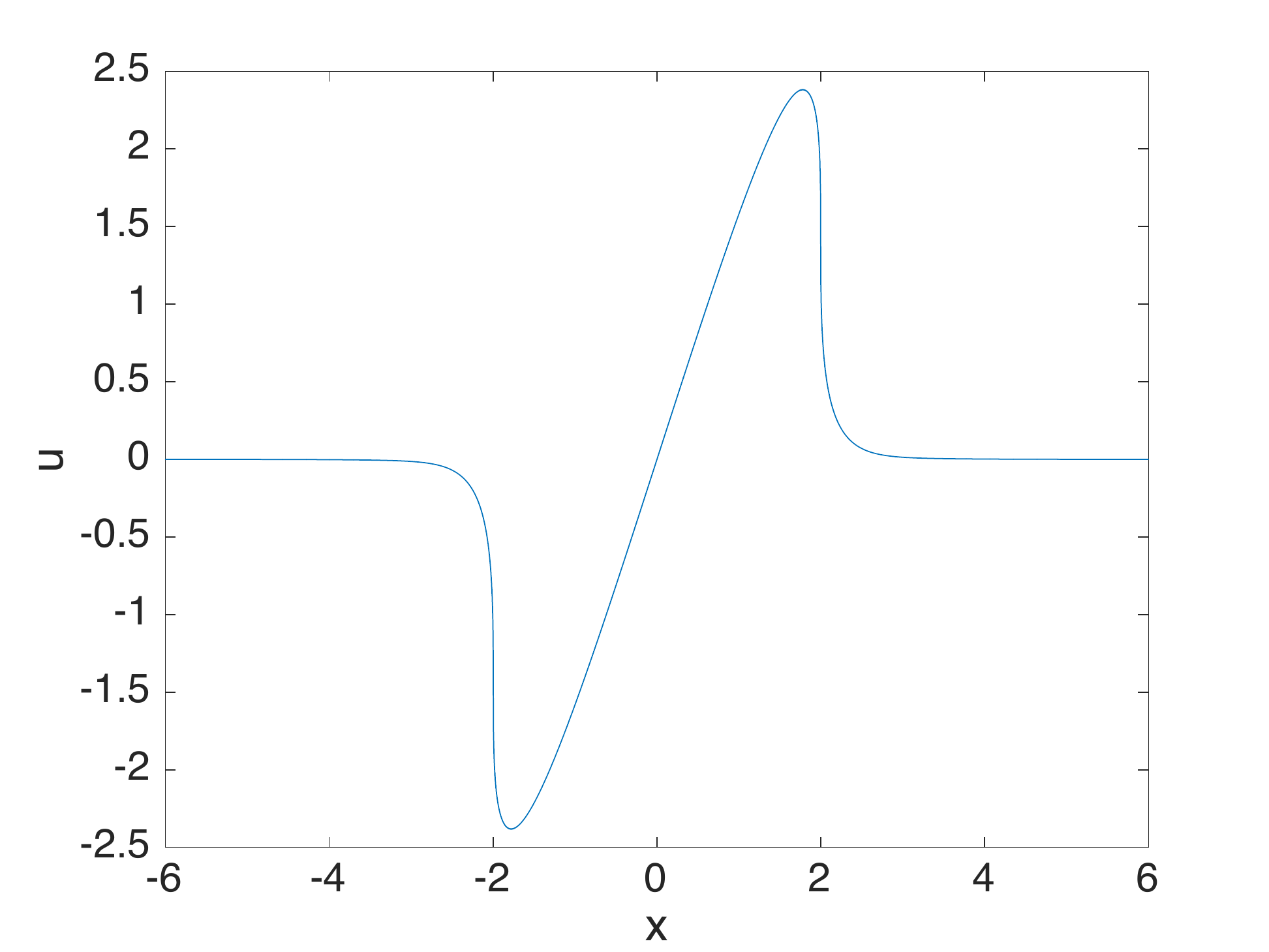}
  \includegraphics[width=0.49\textwidth]{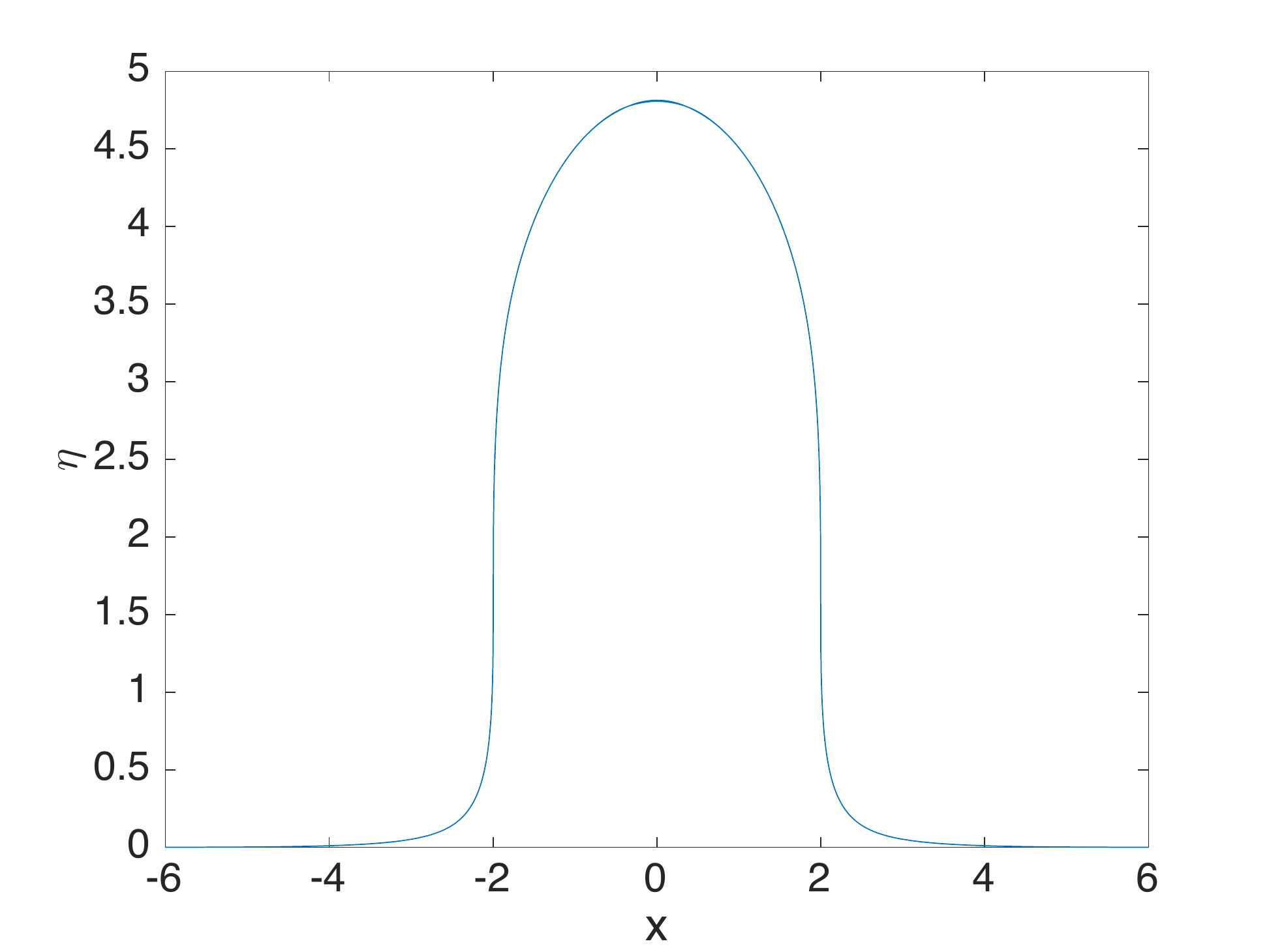}
 \caption{Solution to the Boussinesq system (\ref{sys}) for 
 $\epsilon=1$ and the initial 
data  $u(x,0)=0$, $\eta(x,0)=10\exp(-x^{2})$ at the time $t=0.4115$.}
 \label{figsautsysgauss}
\end{figure}

For $\epsilon=0.1$ and the same initial data, the solution breaks at 
$t=2.2262$ with $\mu=.423$, i.e., again a cusp as shown in 
Fig.~\ref{figsautsysgausse01}. But this time the behavior is 
different from Fig.~\ref{figsautsysgauss}: the solution follows for a 
certain time the underlying wave equations and two humps are forming 
from the initial hump. But dispersion is too weak to overcome the 
nonlinearity and cusps form in finite time. Note that there is no 
clear scaling for the blow-up time in dependence of $\epsilon$ here 
since the mechanisms for the blow-up appear to be different. 

\begin{figure}[htb!]
  \includegraphics[width=0.49\textwidth]{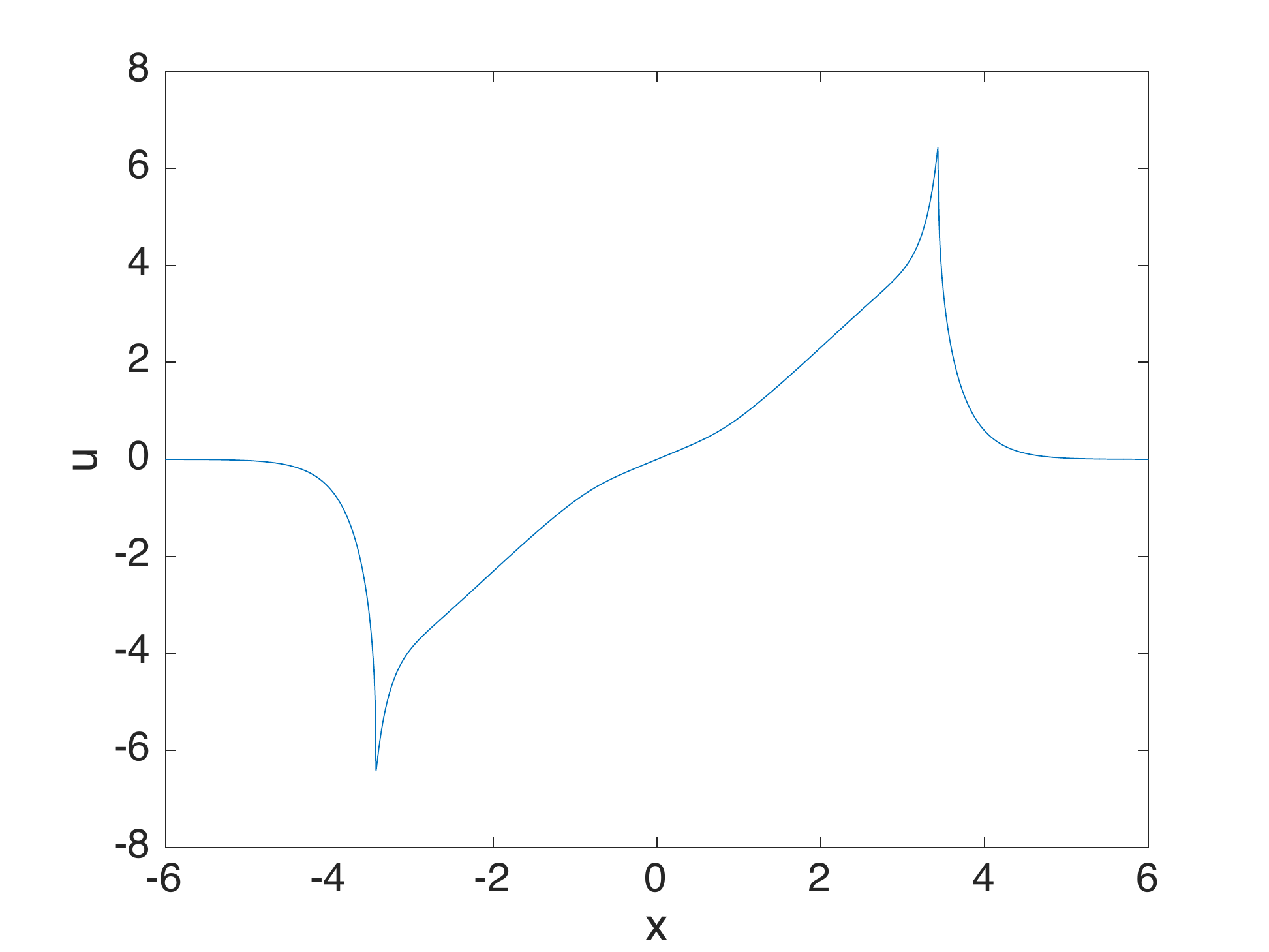}
  \includegraphics[width=0.49\textwidth]{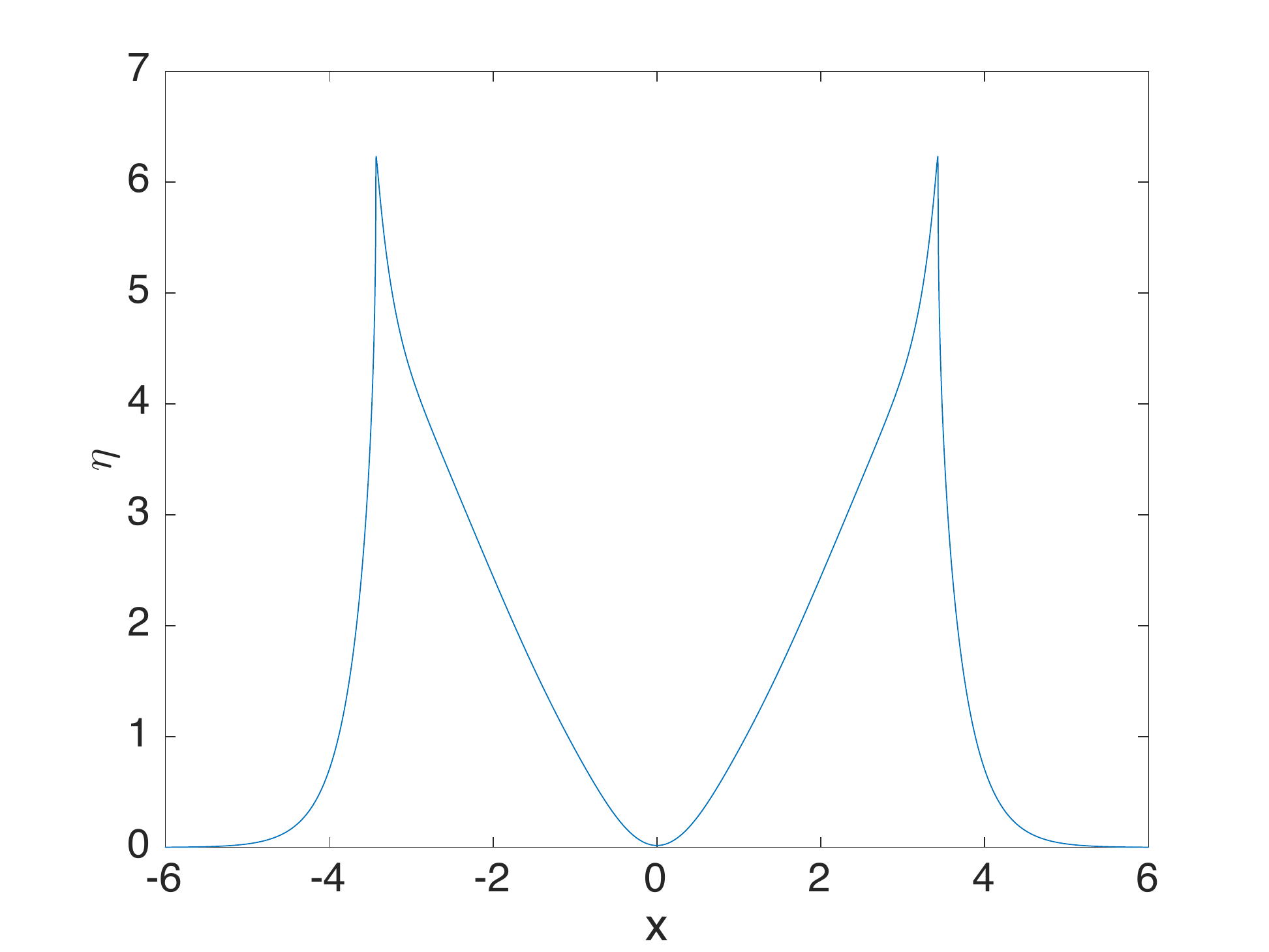}
 \caption{Solution to the Boussinesq system (\ref{sys}) for 
 $\epsilon=0.1$ and the initial 
data  $u(x,0)=0$, $\eta(x,0)=10\exp(-x^{2})$ at the time $t=2.2262$.}
 \label{figsautsysgausse01}
\end{figure}

For even smaller values of $\epsilon$, the solution for the same 
initial data appears to be global in time. As can be seen in 
Fig.\ref{figsautsysgausse001} for $\epsilon=0.01$, just two `solitons' 
appear to emerge from the initial hump. For sufficiently small $\epsilon$, the 
system appears to be close to the wave equation.
\begin{figure}[htb!]
  \includegraphics[width=0.49\textwidth]{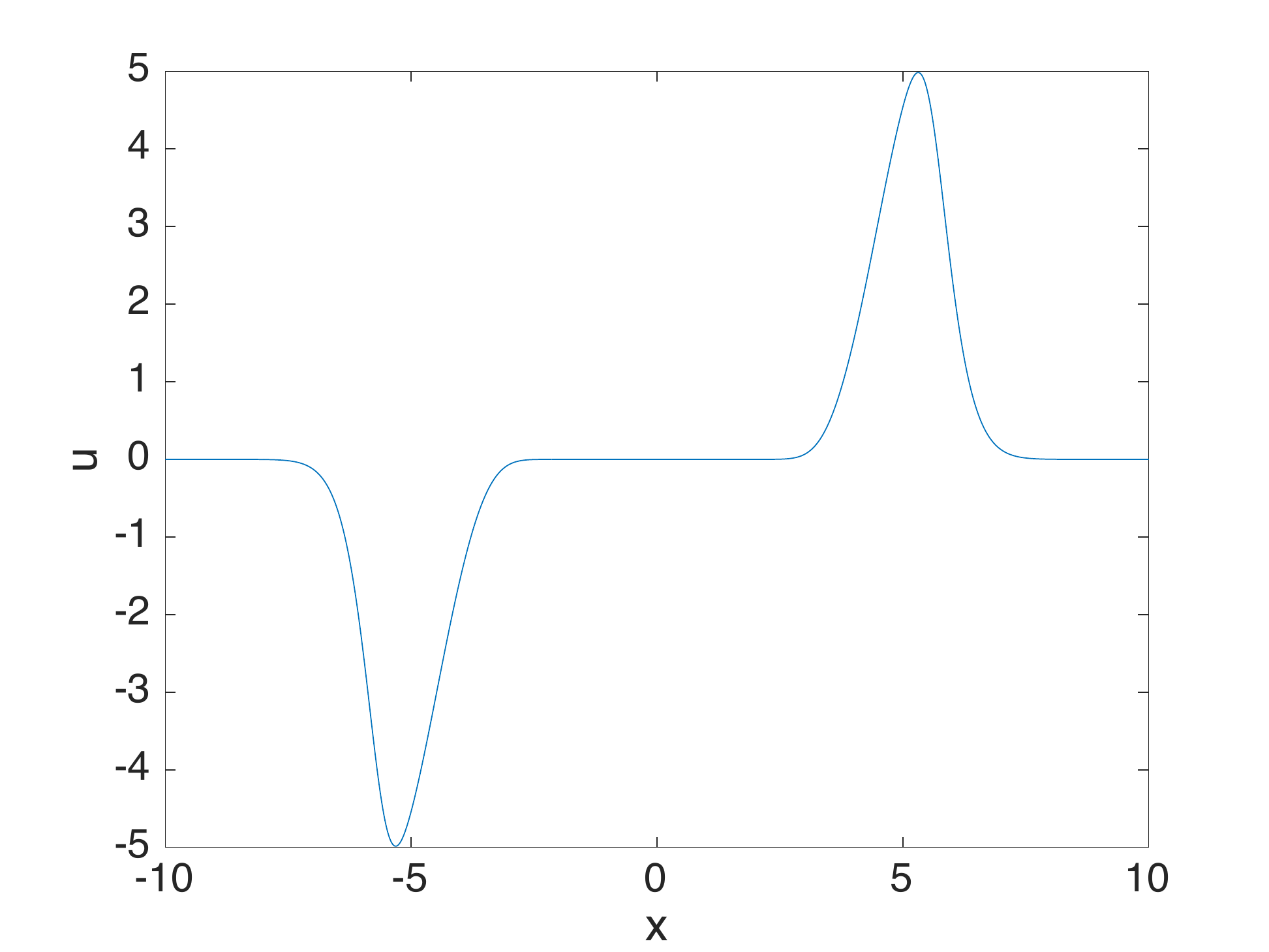}
  \includegraphics[width=0.49\textwidth]{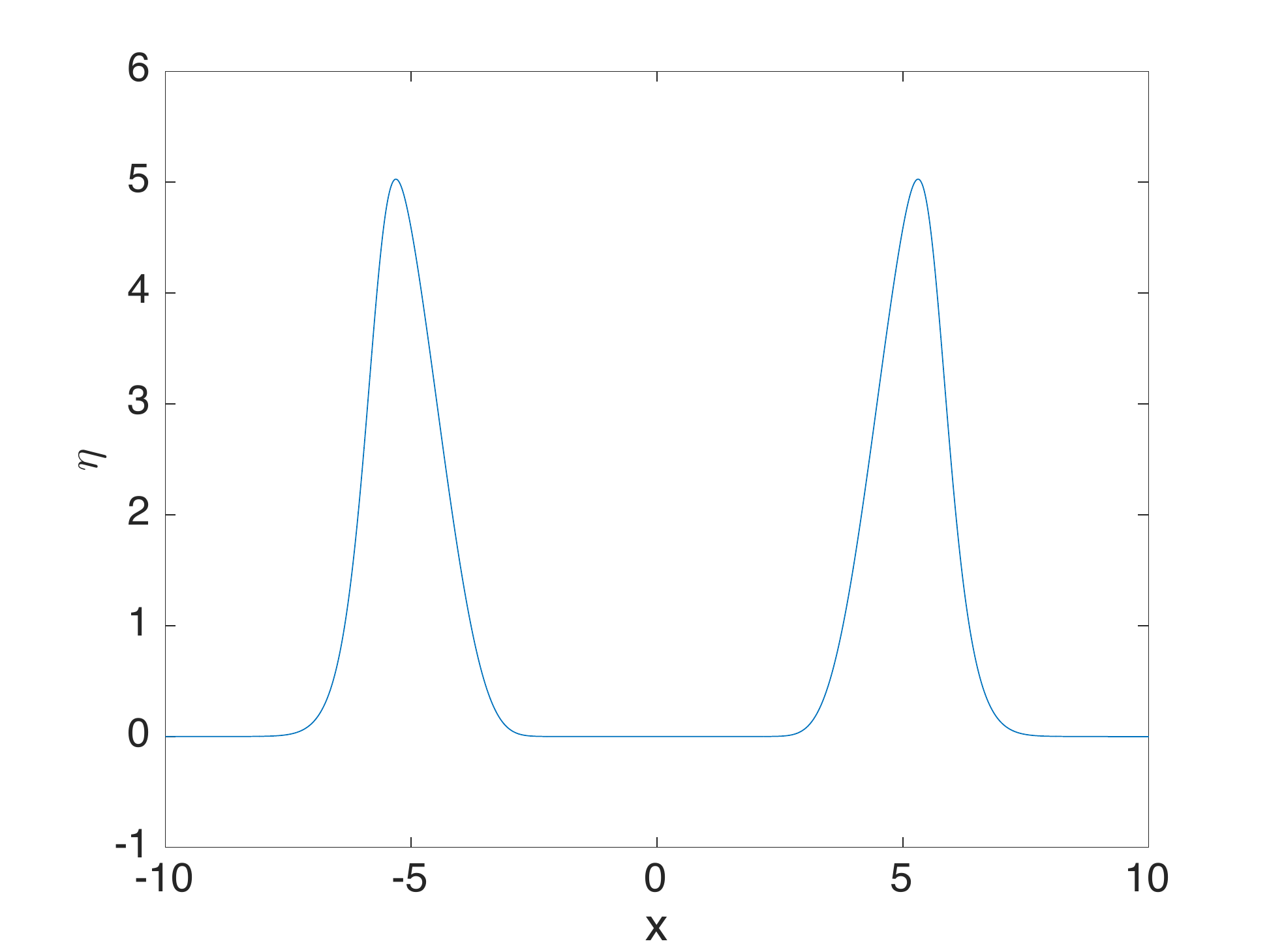}
 \caption{Solution to the Boussinesq system (\ref{sys}) for 
 $\epsilon=0.01$ and the initial 
data  $u(x,0)=0$, $\eta(x,0)=10\exp(-x^{2})$ at the time $t=5$.}
 \label{figsautsysgausse001}
\end{figure}

For negative $w$, the system (\ref{sys}) is ill-posed. For the 
numerical studies, this implies that the Krasny filter must be used 
at a higher level (we choose $10^{-10}$) in order to control 
numerical instabilities. If we consider the initial data $u(x,0)=0$, 
$\eta(x,0)=-10\exp(-x^{2})$, i.e., the same data which led to 
Fig.~\ref{figsautsysgauss} except for the sign of $w$, we find that 
the solution has a blow-up for $t=0.17$ as shown in 
Fig.~\ref{figsautsysmgauss}. The fitting of the Fourier coefficients 
according to (\ref{fit}) yields $\mu=0.2029$, i.e., again a cusp. 
\begin{figure}[htb!]
  \includegraphics[width=0.49\textwidth]{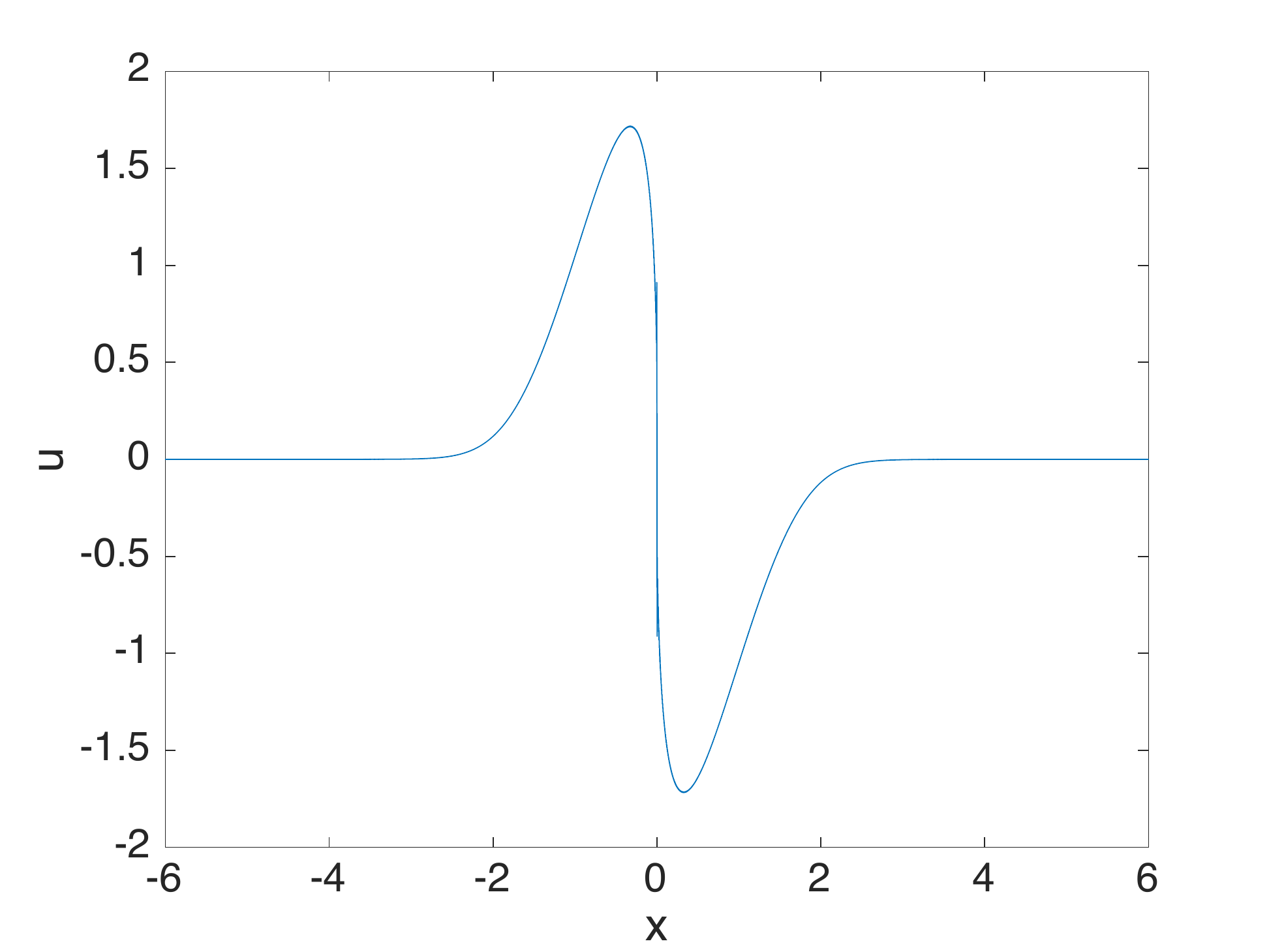}
  \includegraphics[width=0.49\textwidth]{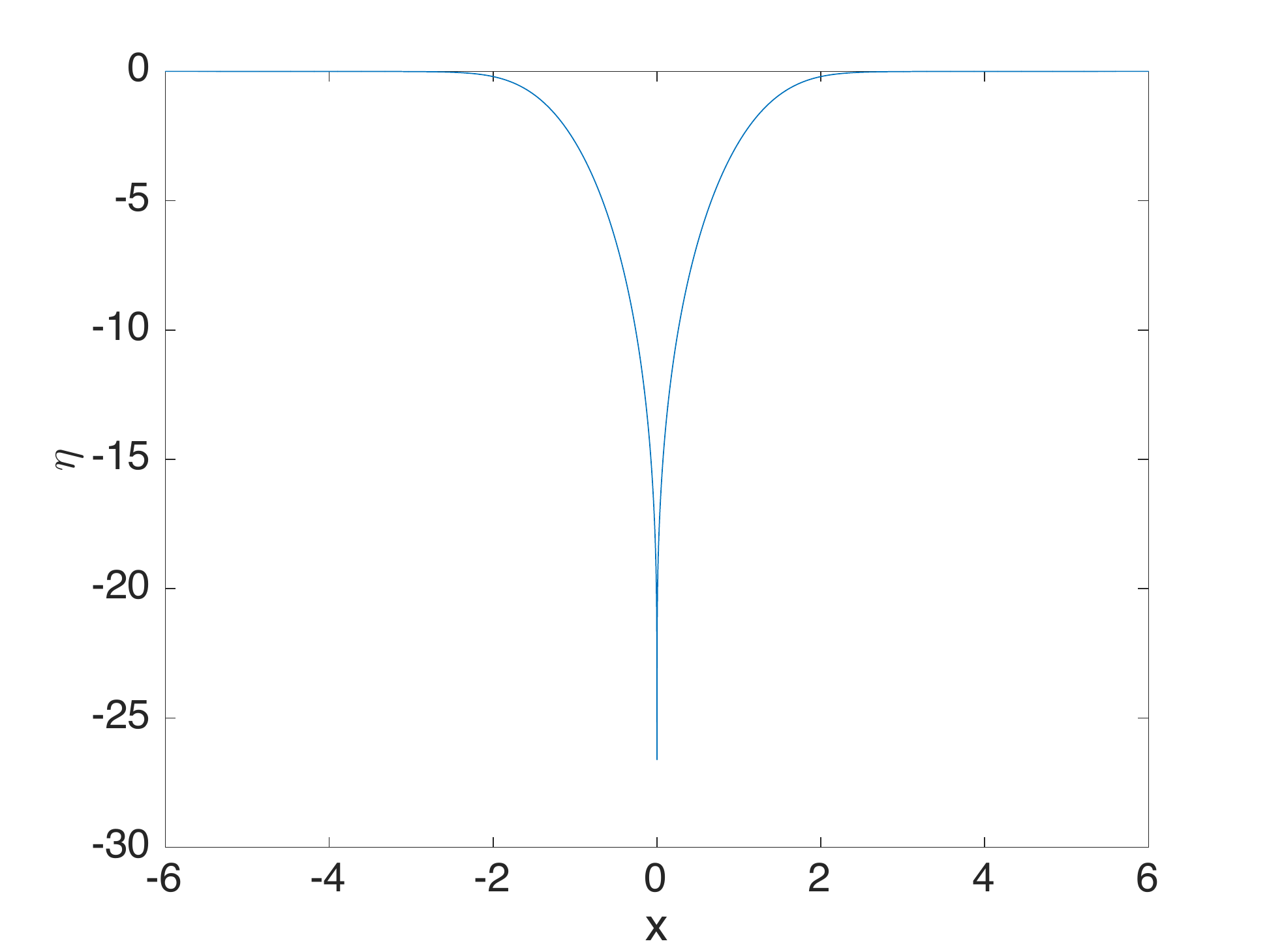}
 \caption{Solution to the Boussinesq system (\ref{sys}) for 
 $\epsilon=1$ and the initial 
data  $u(x,0)=0$, $\eta(x,0)=-10\exp(-x^{2})$ at the time $t=0.17$.}
 \label{figsautsysmgauss}
\end{figure}

For the same initial data, but $\epsilon=0.1$, a blow-up very similar 
to Fig.~\ref{figsautsysmgauss} is observed 
for $t=0.579$ with $\mu=1.15$. In the case of even smaller 
$\epsilon=0.01$, the solution appears to be again close to the 
solution of the wave equation as in Fig.~\ref{figsautsysgausse001}.

The effects of the ill-posedness of the system for negative initial 
data are more visible in the presence of oscillations. In 
Fig.~\ref{figsautsyssint00895u} we show the solution to the system 
for the initial data $u(x,0)=0$ and $\eta=\sin(10x)e^{-x^{2}}$. The code 
breaks at $t=0.0898$ since a singularity in the complex plane appears 
to hit the real axis. The fitting of the Fourier coefficients 
according to (\ref{fit}) is not conclusive ($\mu=0.0619$) whether 
this is a cusp or a pole. 
\begin{figure}[htb!]
  \includegraphics[width=0.49\textwidth]{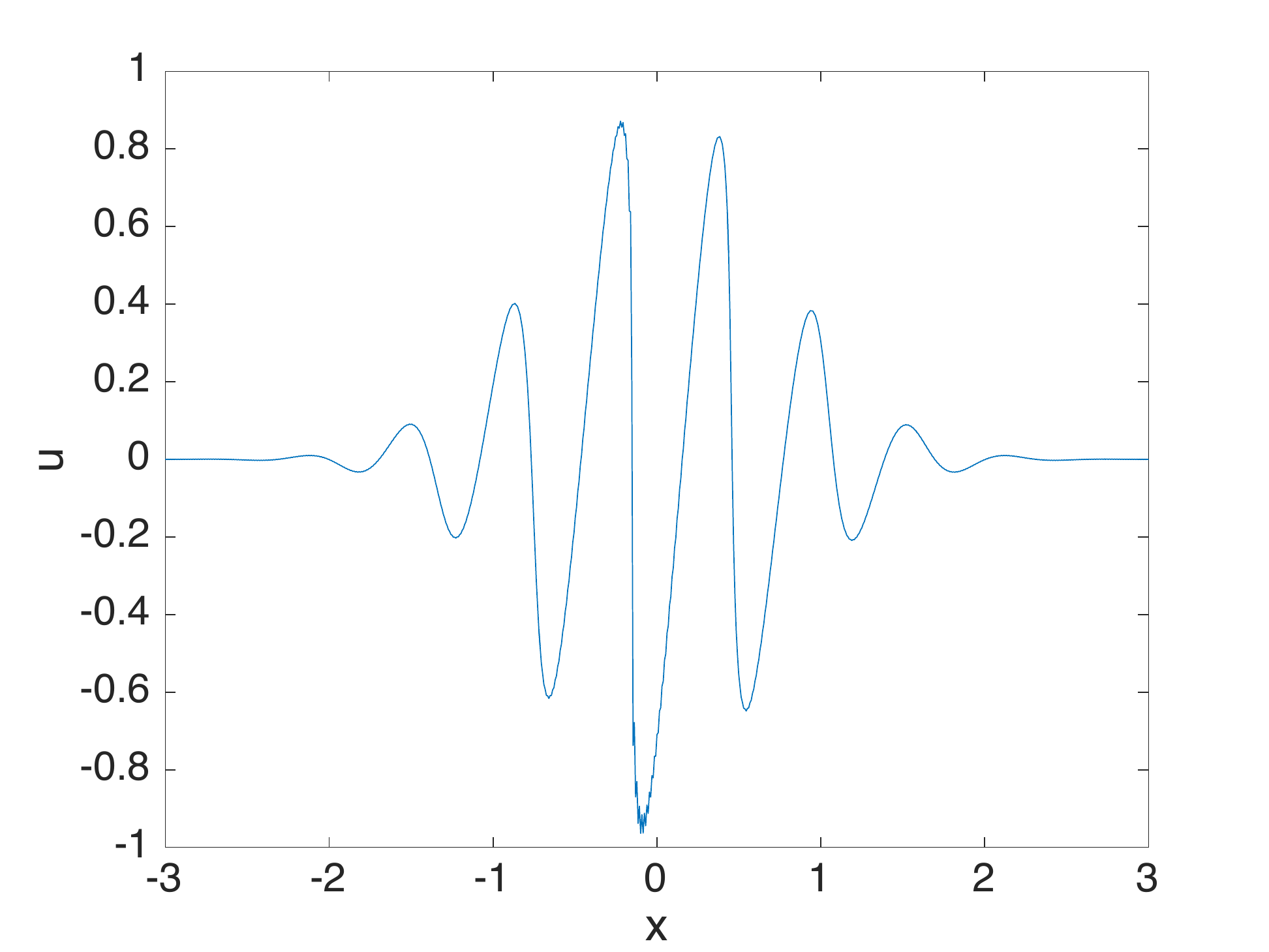}
  \includegraphics[width=0.49\textwidth]{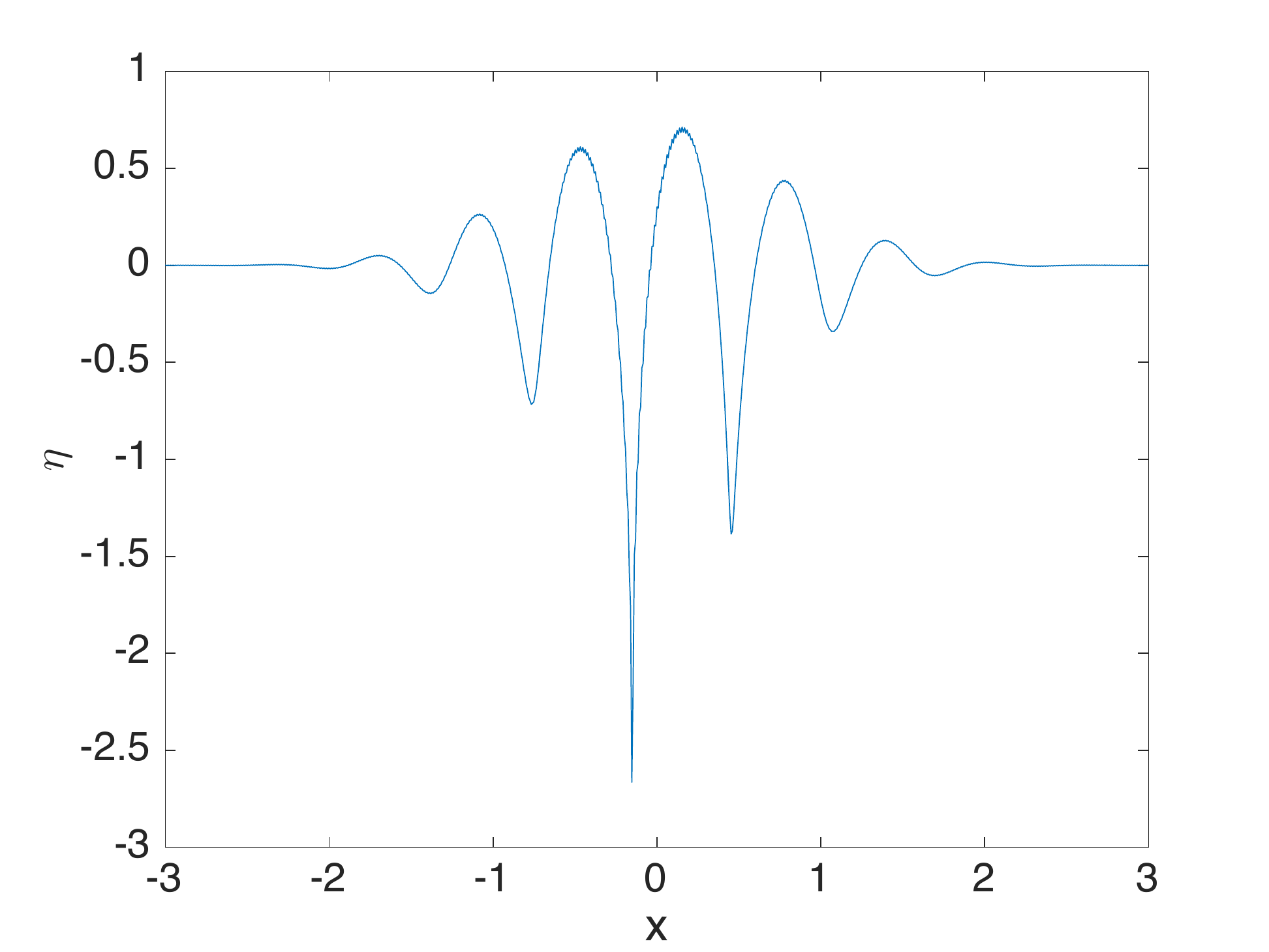}
 \caption{Solution to the Boussinesq system (\ref{sys}) for 
 $\epsilon=1$ and the initial 
data  $u(x,0)=0$, $\eta(x,0)=\sin(10x)\exp(-x^{2})$ at the time 
$t=0.085$.}
 \label{figsautsyssint00895u}
\end{figure}

\subsection{Numerical study of the Boussinesq system with surface 
tension}
If we study the same initial data as in the previous subsection for 
the Boussinesq system with surface tension, the effects of the 
stronger dispersion are clearly visible. 

The initial data of Fig.~\ref{figsautsysmgauss} lead in the 
presence of surface tension with $\beta=1$ to the solution in 
Fig.~\ref{figsautsysgaussb1}. The solution appears to exist for all 
times, instead of a shock a dispersive shock wave is observed. 
\begin{figure}[htb!]
  \includegraphics[width=0.49\textwidth]{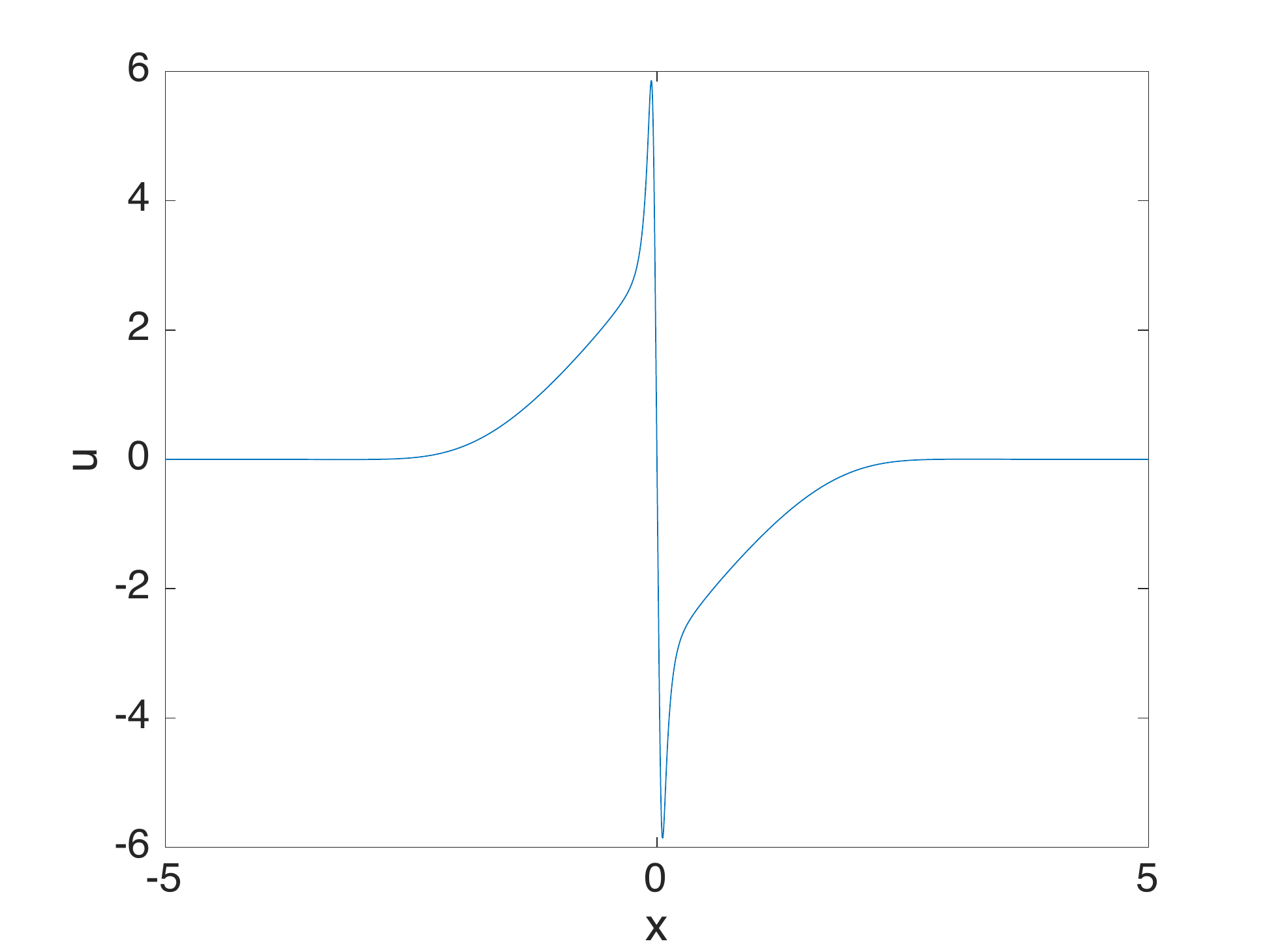}
  \includegraphics[width=0.49\textwidth]{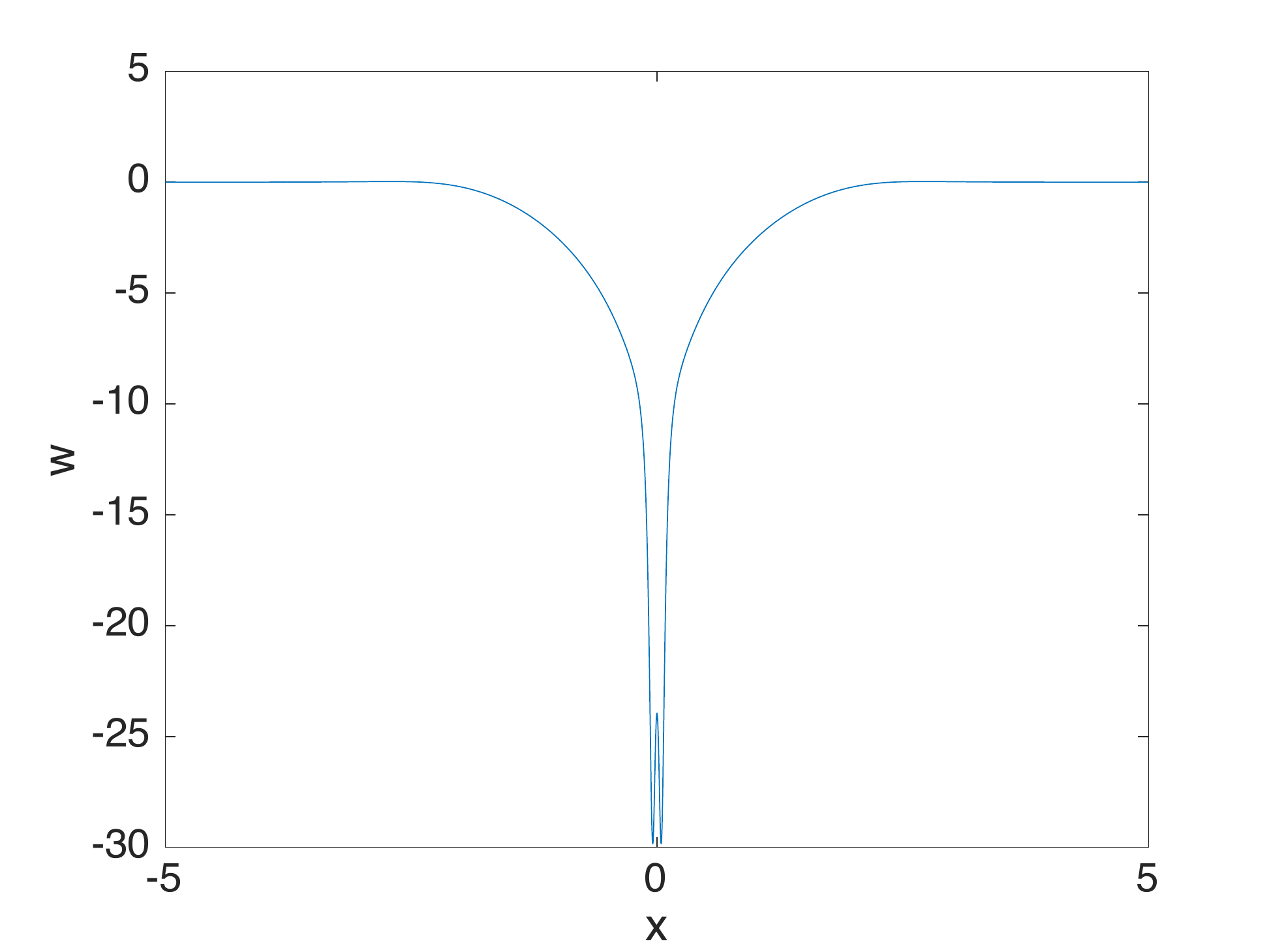}
 \caption{Solution to the Boussinesq system (\ref{sys}) with surface 
 tension $\beta=1$ for 
 $\epsilon=1$ and the initial 
data  $u(x,0)=0$, $\eta(x,0)=10\exp(-x^{2})$ at the time 
$t=2.4$.}
 \label{figsautsysgaussb1}
\end{figure}

For negative $\eta(x,0)$, the situation in Fig.~\ref{figsautsysmgauss} on 
the other hand is not really changed. Again a blow-up appears in 
finite time despite a surface tension of $\beta=1$ as can be seen in 
Fig.~\ref{figsautsysmgaussb1}. The code breaks at $t=0.2568$ with 
$\mu=-0.05$ after fitting the Fourier coefficients according to 
(\ref{fit}). 
\begin{figure}[htb!]
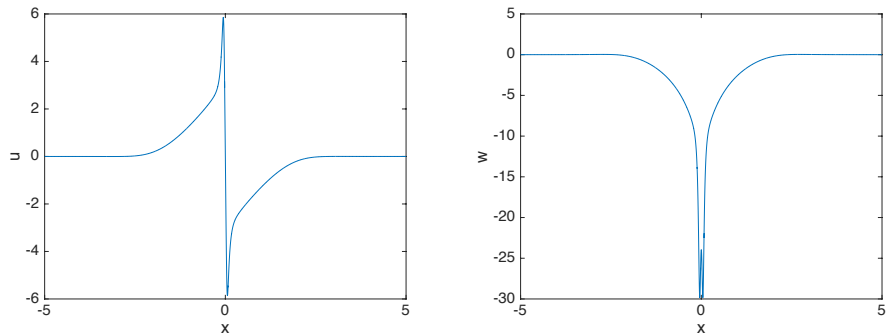

  \includegraphics[width=0.49\textwidth]{sautsysm10gaussb1t024u.pdf}
  \includegraphics[width=0.49\textwidth]{sautsysm10gaussb1t024w.pdf}
 \caption{Solution to the Boussinesq system (\ref{sys}) for 
 $\epsilon=1$ and the initial 
data  $u(x,0)=0$, $\eta(x,0)=-10\exp(-x^{2})$ at the time 
$t=0.085$.}
 \label{figsautsysmgaussb1}
\end{figure}

\vspace{0.5cm}
%\textcolor{red}{ A REGARDER POUR WHITHAM : prendre une donn\' ee gentille (par exemple une gaussienne). Regarder KdV avec cette donn\' ee initiale et comparer avec }
%\textcolor{red}{ Whitham pour cette m\^{e}me donn\' ee pour des valeur de $\epsilon$ de plus en plus petites.}

%\textcolor{red}{Estimer le temps d'explosion de Whitham en fonction de $1/\epsilon$.}

%\textcolor{red}{Simuler le soliton de Whitham et regarder sa stabilit\' e.}

%\textcolor{red}{Solitons de Whitham et de Whitham-Boussinesq pour $\epsilon$ petit (voir calculs ci-dessus.}

%\textcolor{red}{M\^{e}mes questions pour le Whitham avec tension de surface ($\beta>0$). Pour cette \' equation qui est $L^2 $ critique on attend  un blow-up {\it \`a la Martel-Merle} via l'instabilit\'  e du soliton. La situation pour les ondes solitaires est diff\' erente de celle du cas sans tension de surface car on sait d'apr\` es Arnesen qu'il en existe pour tout $\epsilon. $ Elles devraient \^{e}tre "loin" de elles de KdV pour $\epsilon$ grand et s'en rapprocher quand $\epsilon\to 0.$ On devrait aussi avoir un changement de signe du soliton quand $\beta$ devient plus grand que $\frac{1}{3}.$}

\section{Conclusion}

We have tried in this paper to answer some key questions on the Whitham equation and systems. As far as modeling of water waves is concerned, the Whitham equation
is relevant in the KdV (Boussinesq) regime and then it is probably 
not better than the KdV equation itself (and actually it is never used in realistic water waves modeling). On the other hand, because of its dispersion relation that behaves drastically differently for large and small frequencies, it has a fascinating variety of dynamical behaviors, most of them displayed in numerical simulations, that deserve further mathematical investigations.

The Boussinesq-Whitham systems, however, appear to have almost no interest for the modeling of water waves, since the local Cauchy problem (in absence of surface tension) can be solved only for initial data on the elevation that satisfies a rather unphysical condition. Its mathematical interest is also limited since its long wave limit is an ill-posed Boussinesq system.

The Cauchy problem for the system with surface tension while having similar shortcomings to model  water waves,  leads however  to an interesting open mathematical question.
 	
\begin{merci}
The Authors were partially  supported by the Brazilian-French program in mathematics and the MathAmSud program. J.-C. S. acknowledges support from the project ANR-GEODISP of the Agence Nationale de la Recherche. F.L and D.P. were partially supported by CNPq and FAPERJ/Brazil. J.-C. S. thanks David Lannes for useful discussions related to this work. 
\end{merci}
\bibliographystyle{amsplain}

\begin{thebibliography}{99}
\bibitem{AMP}\textsc{P. Acevez-Sanchez, A.A. Minzoni and P. Panayotaros}, {\it Numerical study of a nonlocal model for water-waves with variable depth}, Wave Motion, {\bf 50} (2013), 80-93.

\bibitem{AlBo} \textsc{J. P. Albert and J. L. Bona,}
\emph{Comparisons between model equations for long waves,}
J. Nonlinear Sci., \textbf{1} (1991), 345--374.

\bibitem{AlBoSa}\textsc{J. Albert, J. L. Bona and J.-C. Saut}, {\it Model equations for waves in stratified fluids},  Proc. Royal Soc. London A, {\bf 453}, (1997), 1233--1260.
\bibitem{ABM} \textsc{A. Ambrose, J.L. Bona and T. Milgrom}, {\it Global solutions and ill-posedness for the Kaup system and related Boussinesq systems}, Preprint (2017).
\bibitem{Ang}\textsc{J. Angulo Pava}, {\it Stability properties of solitary waves for fractional KdV and BBM equations}, arXiv:1701.06221v1  [math.AP]  22 Jan 2017.

\bibitem{Ar}\textsc{M.A. Arnesen},
{\it Existence of solitary-wave solutions to nonlocal equations}, Disc. Cont. Dyn. Syst. A, {\bf 36} (7)  (2016), 3483-3510.

\bibitem{BCS1} \textsc {J. L. Bona, M. Chen and J.-C. Saut}, {\it Boussinesq equations and
other systems for small-amplitude long waves in nonlinear dispersive
media I : Derivation and the linear theory}, J. Nonlinear Sci., {\bf
12} (2002), 283-318.

\bibitem{BoPrSc} \textsc{J. L. Bona, W. G. Pritchard and L. R. Scott,}
\emph{A comparison of solutions of two model equations for long waves,}
Lect. Appl. Math., \textbf{20} (1983), 235--267.

\bibitem{BS}\textsc{J.L. Bona and J.-C. Saut}, {\it Dispersive Blow-Up II.  Schr\"{o}dinger-TypeEquations, Optical and Oceanic Rogue Waves}, Chin. Ann. Math. Series B, {\bf 31}, (6), (2010), 793-810.

\bibitem{BoSm} \textsc{J.L. Bona and R. Smith,}
\newblock \emph{The initial value problem for the Korteweg-de Vries equation,}
\newblock Philos. Trans. R. Soc. Lond., Ser. A, \textbf{278} (1975), 555--601.

\bibitem{BMcR}\textsc{J.L. Bona, W.R. McKinney and J.M. Restrepo}, {\it Stable and unstable solitary-wave solutions of the generalized long-wave equation}, J. Nonlinear Sci., {\bf 10} (2000), 603-608.

\bibitem{BKN}\textsc{H. Borluk, H. Kalisch and D.P. Nicholls}, {\it A numerical study of the Whitham equation for steady surface water waves}, J. Comput. Appl. Math., {\bf 296} (2016), 293-302.

\bibitem{BEP}\textsc{G. Bruell, M. Ehrnstrom and L. Pei}, {\it Symmetry and decay of traveling wave solutions to the Whitham equation}, J. Diff. Eq., {\bf 262} (2017), 4232-4254.


\bibitem{Ca}\textsc{J.D. Carter}, {\it Bidirectional Whitham equations as models of waves in shallow water}, arXiv:1705.06503v1 [physics.flu-dyn] 18 May 2017.

\bibitem{CG} \textsc{J. D. Carter and D. George}, {\it The Whitham equation as a model of water waves}, preprint (2016).

\bibitem{DMDK}\textsc{E. Dinvay, D. Moldabayev, D. Dutykh and H. Kalisch}, {\it The Whitham equation with surface tension}, preprint (2016).

\bibitem{DGK} \textsc{B.~Dubrovin, T.~Grava and C.~Klein},  \emph{Numerical Study of breakup in generalized Korteweg--de Vries and Kawahara equations}, SIAM J. Appl. Math., {\bf 71} (2011),  983-1008.

\bibitem{DKM}\textsc{D. Dutykh, H. Kalish and D. Moldabayev}, {\it The Whitham equation as a model for surface waves}, Physica D, {\bf 309} (2015), 99-107.

\bibitem{EhEsPe} \textsc{M. Ehrnstr\"{o}m, J. Escher, and L. Pei,}
\newblock \emph{A note on the local well-posedness for
the Whitham equation},
\newblock Elliptic and parabolic equations, vol. 119 of Springer Proc.
Math. Stat., Springer, Cham, 2015, 63--75.

\bibitem{EhGr}\textsc{ M. Ehrnstr\"{o}m and M. Groves}, {\it Solitary wave solutions to the full dispersion Kadomtsev-Petviashvili equation}, to appear.

\bibitem {EGW} \textsc{M. Ehrnstr\"{o}m, M.D. Groves and E. Wahl\' en}, {\it On the existence and stability of solitary-wave solutions to a class of evolution equations of Whitham type}, Nonlinearity, {\bf 25} (2012), 2903--2936.

\bibitem{EK}\textsc{M. Ehrnstr\"{o}m and H. Kalisch}, {\it Global bifurcation  for the Whitham equation}, Math.Mod. Nat. Phenomena, {\bf 8} (2013), 13-30.

\bibitem{EK2}\textsc{M. Ehrnstr\"{o}m and H. Kalisch}, {\it Traveling waves for the Whitham equation}, Diff. and Int. Eq., {\bf 29} (11-12) (2009), 1193-1210.

\bibitem {EW} \textsc{M. Ehrnstr\"{o}m and E. Wahl\' en}, {\it  On Whitham's conjecture of a highest cusped wave for a nonlocal dispersive equation}, arXiv:1602.05384v1 [math.AP] 17 Feb 2016.

\bibitem{FL}\textsc{R.L. Frank and E. Lenzmann}, {\it On the uniqueness and non-degeneracy of ground states of $(-\Delta)^s Q+Q-Q^{\alpha +1}=0\; \text{in}\; \R$}, Acta Math., {\bf 210} (2) (2013), 261--318.
%\bibitem{Gr}\textsc{M. Groves and M. Ehrnstr\"{o}m}, {\it Solitary wave solutions to the full dispersion Kadomtsev-Petviashvili equation}, to appear.
\bibitem{Hu}\textsc{V. Hur}, {\it Breaking in the Whitham equation for shallow water waves}, arXiv:1506.04075v1 [math.AP] 12 Jun 2015.

\bibitem{Hu2}\textsc{V. Hur}, {\it Norm inflation for equations of KdV type with fractional dispersion, } arXiv:1701.\newline 03354v1
 [math.AP] 12 Jan 2017.
 
\bibitem{HJ} \textsc{V. Hur and M. A. Johnson}, {\it Modulational instability in the Whitham equation for water waves}, Studies in Appl. Math.,  {\bf 134} (2014), 120-143.

\bibitem{Hu-Pa}\textsc{V. Hur and A.K. Pandey}, {\it Modulational instability in a full-dispersion shallow water model}, arXiv:1608.04685v1 [math.AP] 16 Aug 2016.

\bibitem{Hu-Tao}\textsc{V. Hur and  L. Tao}, {\it Wave breaking for the Whitham equation with fractional dispersion}, Nonlinearity, {\bf 27} (2014), 2937-2949.

\bibitem{Hu-Tao-II}\textsc{V. Hur and  L. Tao}, {\it Wave breaking in a shallow water model}, arXiv:1608.04681v2 [math.AP] 23 Sep 2016.

\bibitem{Ka}\textsc{D.J. Kaup}, {\it  A higher-order water-wave equation and the method for solving it}, Progr. Theoret. Phys., {\bf 54} (1975), 396-408.

\bibitem{KaPo} \textsc{T. Kato and G. Ponce,}
 \emph{Commutator estimates and the Euler
and Navier-Stokes equations}, Comm. Pure Appl. Math., {\bf 41} (1988), 891--907.

\bibitem{etna} \textsc{C. Klein}, {\it Fourth order time-stepping for low dispersion Korteweg-de Vries and nonlinear
Schr\"odinger equations}, ETNA {\bf 29} (2008), 116-135.

\bibitem{KP}\textsc{C. Klein and R. Peter}, {\it Numerical study of blow-up in solutions to generalized Kadomtsev-Petviashvili equations}, Discr. Cont.  Dyn.  Syst. B,  {\bf 19} (6), (2014), 1689-1717.

\bibitem{KP2}\textsc{C. Klein and R. Peter}, {\it Numerical study of blow-up in solutions to generalized Korteweg-de Vries equations},  Physica D,  {\bf 304-305} (2015), 52-78.

\bibitem{KS}\textsc{C. Klein and J.-C. Saut}, {\it A numerical approach to blow-up issues for dispersive perturbations of Burgers equation},  Physica D, {\bf 295-296} (2015), 46-65.

\bibitem{krasny} \textsc{R. Krasny}, {\it A study of singularity formation in a 
  vortex sheet by the point-vortex approximation}, J. Fluid Mech., {\bf 167} (1986), 65-93. 

\bibitem{Ku}\textsc{B.A. Kupperschmidt}, {\it Mathematics of dispersive water waves}, Commun. Math. Phys., {\bf 99} (1985), 51-73.

\bibitem{La1}\textsc{D. Lannes}, {\it Water waves : mathematical theory and asymptotics}, Mathematical Surveys and Monographs, vol 188 (2013), AMS, Providence.

\bibitem{LS} \textsc{D. Lannes and J.-C. Saut}, {\it Remarks on the full dispersion Kadomtsev-Petviashvili equation}, Kinetic and Related Models, American Institute of Mathematical Sciences, {\bf  6} (4) (2013), 989--1009.

 \bibitem{LPS}\textsc{F. Linares, D. Pilod and J.-C. Saut}, {\it Remarks on the orbital stability of ground state solutions of fKdV and related equations},  Advances Diff. Eq., {\bf 20} (9/10), (2015), 835-858.

 \bibitem{LPS2}
\textsc{F. Linares, D. Pilod, and J.-C. Saut},
{\it Dispersive perturbations of Burgers and hyperbolic equations I: local
theory},
SIAM J. Math. Anal., {\bf 46} (2014), 1505-1537.

 \bibitem{LPS3}
\textsc{F. Linares, D. Pilod, and J.-C. Saut}, {\it The Cauchy problem for the fractionary Kadomtsev-Petviashvili equations,} arXiv:1705.09744v1 [math.AP] 27 May 2017.

\bibitem{Ma}\textsc{A. Majda},  {\it Incompressible Fluid Flow and Systems of Conservation Laws in Several Space Variables},
(Heidelberg: Springer) 1984.

\bibitem{MM} \textsc{Y. Martel and F. Merle}, {\it Blow up in finite time and dynamics of blow up solutions for the $L^2$-critical generalized KdV equation}, J. Amer. Math. Soc., {\bf 15} (3)  (2002), 617--664.

\bibitem{MMR}\textsc{Y. Martel and F. Merle, and P. Rapha\"el}, \emph{Blow up for the 
critical gKdV equation I: dynamics near the solitary wave.}
Acta Math., \textbf{212} (1), 59--140 (2014).

\bibitem{MP}\textsc{Y. Martel and D. Pilod}, {\it Construction of a minimal mass blow up solution of the modified Benjamin-Ono},  preprint (2016), to appear in Mat. Annal., arXiv:1605.01837.

\bibitem{Me}\textsc{B. M\' elinand}, {\it A mathematical study of meteo and landslide tsunamis : the Proudman resonance}, Nonlinearity, {\bf 28} (2015), 4037-4080.

\bibitem{Mes}\textsc{B. M\' esognon-Gireau}, {\it A dispersive estimate for the linearized water-waves equations in finite depth}, J. Math. Fluid Mech., (2016). doi:10.1007/s00021-016-0286-1.

\bibitem{MPV}\textsc{L. Molinet, D. Pilod and S. Vento}, {\it On well-posedness for some dispersive perturbations of Burgers' equation}, arXiv:1702.03191v1  [math.AP]  10 Feb 2017.

\bibitem{MST}
\newblock \textsc{L. Molinet, J.-C.Saut and N. Tzvetkov},
\newblock {\it Ill-posedness issues for the Benjamin-Ono and related
equations},
\newblock SIAM J. Math. Anal., {\bf 33} (4) (2001), 982--988.

\bibitem{RK}\textsc{F. Remonato and H. Kalisch}, {\it Numerical bifurcation for the capillary Whitham equation}, Physica D, {\bf 343} (2017), 51- 62.

\bibitem{GMRES} \textsc{Y. Saad and M. Schultz}, {\it GMRES: A generalized 
 minimal residual algorithm for solving nonsymmetric linear systems}, SIAM J. Sci. Stat. Comput., {\bf 7} (3) (1986), 856-869. 
   
\bibitem{SKCK}\textsc{N. Sanford, K. Kodama, J.C. Carter and H. Kalisch}, {\it Stability of traveling wave solutions to the Whitham equation}, Physics Letters A, {\bf 378} (2014), 2100-2107.

\bibitem{RT}\textsc{F. Rousset and N. Tzvetkov}, {\it Transverse nonlinear instability for two-dimensional dispersive models}, Annales Inst. H. Poincar\'e Ana. N. Lin., {\bf 26} (2009), 477-496.


\bibitem{Sa} \textsc{J.-C. Saut},
\newblock \emph{Sur quelques g\'en\'eralisations de l'\'equation de Korteweg-de Vries},
\newblock J. Math. Pures Appl., \textbf{58} (1979), 21--61.

\bibitem{SWX}\textsc{ J.-C. Saut, Chao Wang and Li Xu}, {\it The  Cauchy problem on large time for surface waves Boussinesq systems II}, arXiv:1511.08824v1 [math.AP] 27 Nov 2015 and SIAM J. Math. Anal. to appear.

\bibitem{SSF} \textsc{C. Sulem, P. Sulem, and H. Frisch}, {\it Tracing complex  
singularities with spectral methods}, J. Comp. Phys., {\bf 50} (1983),  138-161.  

\bibitem{Whi}\textsc{G. B. Whitham}, {\it Variational methods and applications to water waves}, Proc.R. Soc. Lond. Ser. A., {\bf 299} (1967), 6-25.

\bibitem{Za}\textsc{V.E. Zakharov}, {\it Weakly nonlinear waves on the surface of an ideal finite depth fluid}, Amer. Math. Soc. Transl., {\bf 182} (2) (1998), 167-197.

\end{thebibliography}

\end{document}